\documentclass[reqno,11pt]{amsart}

\usepackage{amsmath,amssymb,amsthm,mathrsfs}
\usepackage{epsfig,color}
\usepackage[latin1]{inputenc}

\usepackage[numbers]{natbib}

\definecolor{grey}{rgb}{.7,.7,.7}



\voffset=-1.5cm \textheight=23cm \hoffset=-.5cm \textwidth=16cm
\oddsidemargin=1cm \evensidemargin=-.1cm
\footskip=35pt
\parindent=20pt

\numberwithin{equation}{section}

\def\Y{\mathcal Y}
\def\reg{{\rm reg}}
\def\H{\mathcal H}

\def\DD{\mathcal D}
\def\MM{\mathbf M}

\def\R{\mathbb R}
\def\N{\mathbb N}
\def\Z{\mathbb Z}
\newcommand{\dist}{\mathop{\mathrm{dist}}}
\def\e{\varepsilon}
\def\s{\sigma}
\def\S{\Sigma}
\def\vphi{\varphi}

\def\om{\omega}
\def\l{\lambda}
\def\g{\gamma}

\def\Om{\Omega}
\def\de{\delta}
\def\Id{{\rm Id}}

\def\spt{{\rm spt}}

\def\pa{\partial}

\def\kk{{\bf k}}

\def\00{{\bf 0}}

\def\SS{\mathbb S}

\def\bd{{\rm bd}\,}
\def\INT{{\rm int}\,}
\def\cl{{\rm cl}\,}


\def\E{\mathcal{E}}
\def\EE{\mathbf{E}}

\def\F{\mathcal{F}}

\def\T{\mathcal{T}}

\newcommand{\vol}{\mathrm{vol}\,}

\def\Ss{\mathcal{S}}

\renewcommand{\a}{\alpha}
\renewcommand{\b}{\beta}
\renewcommand{\d}{\mathrm{d}}
\newcommand{\hd}{\mathrm{hd}}

\renewcommand{\l}{\lambda}

\renewcommand{\om}{\omega}

\newcommand{\ov}{\overline}

\newcommand{\diam}{\mathrm{diam}}

\newcommand{\cc}{\subset\subset}

\def\Lip{{\rm Lip}\,}

\def\weak{\stackrel{*}{\rightharpoonup}}
\def\la{\langle}
\def\ra{\rangle}

\def\ttau{\boldsymbol{\tau}}
\def\MM{\mathbf{M}}

\def\car{\mathrm{car}\,}

\newtheorem*{theorem*}{Theorem}
\newtheorem{theorem}{Theorem}[section]
\newtheorem{lemma}[theorem]{Lemma}
\newtheorem{proposition}[theorem]{Proposition}

\newtheorem{remark}[theorem]{Remark}

\newtheorem{theoremletter}{Theorem}

\setcounter{tocdepth}{1}
\title[Improved convergence for clusters in $\R^3$]{Improved convergence theorems for bubble clusters. \\ II. The three-dimensional case}

\author{G. P. Leonardi}
\address{Dipartimento di Science Fisiche, Informatiche e Matematiche, Universit{\`a} degli Studi di Modena e Reggio Emilia, Via Campi 213/b, I-41100
    Modena, ITALY}
\email{gianpaolo.leonardi@unimore.it}
\author{F. Maggi}
\address{Department of Mathematics, University of Texas at Austin, Austin, TX, USA}
\email{maggi@math.utexas.edu}

\begin{document}

\begin{abstract}
{\rm Given a sequence $\{\E_{k}\}_{k}$ of almost-minimizing clusters in $\R^3$ which converges in $L^{1}$ to a limit cluster $\E$ we prove the existence of $C^{1,\a}$-diffeomorphisms $f_k$ between $\pa\E$ and $\pa\E_k$ which converge in $C^1$ to the identity. Each of these boundaries is divided into $C^{1,\a}$-surfaces of regular points, $C^{1,\a}$-curves of points of type $Y$ (where the boundary blows-up to three half-spaces meeting along a line at 120 degree) and isolated points of type $T$ (where the boundary blows up to the two-dimensional cone over a one-dimensional regular tetrahedron). The diffeomorphisms $f_k$ are compatible with this decomposition, in the sense that they bring regular points into regular points and singular points of a kind into singular points of the same kind. They are almost-normal, meaning that at fixed distance from the set of singular points each $f_k$ is a normal deformation of $\pa E$, and at fixed distance from the points of type $T$, $f_k$ is a normal deformation of the set of points of type $Y$. Finally, the tangential displacements are quantitatively controlled by the normal displacements. This improved convergence theorem is then used in the study of isoperimetric clusters in $\R^3$.
}
\end{abstract}

\maketitle

\tableofcontents

\section{Introduction}\label{section introduction}

\subsection{Overview} This paper is the second part of \cite{CiLeMaIC1}. In \cite[Theorem 3.1]{CiLeMaIC1}, having in mind to address the convergence of stratified singular sets in geometric variational problems, we have detailed a procedure to construct structured diffeomorphisms between manifolds with boundary (in arbitrary dimension and codimension). This result was then used as the starting point to obtain an {\it improved convergence theorem} for planar almost-minimizing clusters, which in turn was used to the address a question posed by Almgren in \cite{Almgren76} concerning the classification of isoperimetric clusters. We discuss here the extension of these results to almost-minimizing clusters in $\R^3$. There are of course major difficulties in this extension, as the structure of singular sets is by far more complex in three-dimensions than in the planar case. Referring to the introduction of \cite{CiLeMaIC1} for detailed motivations, bibliographical references and further applications of improved convergence theorems, we directly pass to introduce the main results proved in this paper.

\subsection{Clusters} A $N$-cluster $\E$ in $\R^n$ ($N,n\ge2$) is a family $\E=\{\E(h)\}_{h=1}^N$ of sets of locally finite perimeter in $\R^n$ such that $0<|\E(h)|$ for $1\le h\le N$ and $|\E(h)\cap\E(k)|=0$ for $1\le h<k\le N$. The set $\E(h)$ is the $h$th chamber of $\E$ and $\E(0)=\R^n\setminus\bigcup_{h=1}^N$ is the exterior chamber of $\E$. The volume $\vol(\E)\in\R^N_+$ of $\E$ has $h$th entry given by $|\E(h)|$, and the perimeter of $\E$ relative to $F\subset\R^n$ is defined by
\[
P(\E;F)=\frac12\sum_{h=0}^NP(\E(h);F)=\sum_{0\le h<k\le N}\H^{n-1}(F\cap\E(h,k))\,,\qquad P(\E)=P(\E;\R^n)\,,
\]
where $\E(h,k)=\pa^*\E(h)\cap\pa^*\E(k)$ and $\pa^*E$ denotes the reduced boundary of a set of locally finite perimeter $E$ in $\R^n$. We shall always normalize (modulo Lebesgue null sets) the chambers $\E(h)$ so to have that $\cl(\pa^*\E(h))=\pa\E(h)$ for $h=0,...,N$, where $\cl$ stands for topological closure. In this way, setting
\[
\pa\E=\bigcup_{h=1}^N\pa\E(h)\,,\qquad \pa^*\E=\bigcup_{h=1}^N\pa^*\E(h)=\bigcup_{0\le h<k\le N}\E(h,k)\,,\qquad\S(\E)=\pa\E\setminus\pa^*\E\,,
\]
we have $P(\E;F)=\H^{n-1}(F\cap\pa^*\E)$ and $\cl(\pa^*\E)=\pa\E$. An {\it isoperimetric cluster} is a $N$-cluster $\E$ in $\R^n$ such that
\[
P(\E)\le P(\F)\qquad\mbox{whenever $\vol(\E)=\vol(\F)$.}
\]
If $\E$ is an isoperimetric cluster, then $\E$ is a $(\Lambda,r_0)$-minimizing cluster in $\R^n$ (for some positive constants $\Lambda$ and $r_0$ depending on $\E$ only) according to the following definition. Setting
\[
\d_F(\E,\F)=\frac12\sum_{h=0}^N\,|(\E(h)\Delta\F(h))\cap F|\,,\qquad \d(\E,\F)=\d_{\R^n}(\E,\F)\,,
\]
for the $L^1$-distance between the $N$-clusters $\E$ and $\F$ in $F\subset\R^n$, one says that $\E$ is a (perimeter) {\it $(\Lambda,r_0)$-minimizing cluster in $\R^n$} if
\begin{equation}
  \label{lambdar0 minimizing cluster}
P(\E;B_{x,r})\le P(\F;B_{x,r})+\Lambda\,\d(\E,\F)\,,
\end{equation}
whenever $x\in\R^n$, $r<r_0$ and $\E(h)\Delta\F(h)\cc B_{x,r}$ for every $h=1,...,N$. In this case, following \cite{Almgren76}, $\pa^*\E$ is a $C^{1,\b}$-hypersurface in $\R^n$ for every $\b\in(0,1)$, $\H^{n-1}(\S(\E))=0$, and $\E(h)$ is an open set for every $h=0,...,N$; see also \cite[Section 3]{CiLeMaIC1}. If in addition $\E$ is an isoperimetric cluster, then $\pa\E$ is bounded and $\pa^*\E$ is a constant mean curvature (thus analytic) hypersurface.

\subsection{Taylor's regularity theorem} When $n=3$ much more can be said about $\S(\E)$ and the behavior of $\pa^*\E$ near $\S(\E)$ thanks to Taylor's theorem \cite{taylor76}. In Theorem \ref{thm structure in R3} below we formulate her result in our context. To this end, we denote by $Y$ a reference closed cone in $\R^3$ defined by three half-planes meeting along their common boundary line (which contains the origin of $\R^3$) by forming 120 degrees angles. We denote by $T$ a reference closed cone in $\R^3$ spanned by edges of a regular tetrahedron and with vertex at the barycenter of the tetrahedron -- which is assumed to be the origin of $\R^3$. Both $Y$ and $T$ are two-dimensional cones in $\R^3$ (with vertex at the origin), and it turns out that, modulo isometries, they model (as tangent cones) all the possible singularities of $(\Lambda,r_0)$-minimizing clusters in $\R^3$. By exploiting \cite{taylor76} one can indeed deduce the following result, where, given $M\subset\R^3$ and $x\in M$, we use the notation
\begin{equation}
  \label{theta M}
  \theta_M(x)=\lim_{r\to 0^+}\frac{\H^2(M\cap B_{x,r})}{r^2}\qquad\mbox{(provided this limit exists)}\,.
\end{equation}

\begin{theorem}\label{thm structure in R3}
  There exists $\a\in(0,1)$ with the following property. If $\E$ is a $(\Lambda,r_0)$-minimizing cluster in $\R^3$, then $\theta_{\pa\E}(x)$ exists for every $x\in\pa\E$ and
  \begin{equation}\label{structure 1}
  \begin{split}
    \pa^*\E=\{\theta_{\pa\E}=\pi\}\,,\qquad\S(\E)=\S_Y(\E)\cup\S_T(\E)\,,
  \\
  \S_Y(\E)=\{\theta_{\pa\E}=\theta_Y(0)\}\,,\qquad\S_T(\E)=\{\theta_{\pa\E}=\theta_T(0)\}\,.
  \end{split}
  \end{equation}
  Moreover, $\S_T(\E)$ is locally finite, there exists a locally finite family $\Ss(\E)$ of closed connected topological surfaces with boundary in $\R^3$ such that
  \begin{equation}\label{structure 2}
    \begin{split}
      \mbox{$S^*=S\setminus\S_T(\E)$ is a $C^{1,\a}$-surface with boundary in $\R^3$ for every $S\in\Ss(\E)$}\,,
      \\
      \pa\E=\bigcup_{S\in\Ss(\E)}S\,,\qquad \pa^*\E=\bigcup_{S\in \Ss(\E)}\INT(S^*)\,,\qquad\S_Y(\E)=\bigcup_{S\in\Ss(\E)}\bd(S^*)\,,
    \end{split}
  \end{equation}
  and there exists a locally finite family $\Gamma(\E)$ of closed connected $C^{1,\a}$-curves with boundary in $\R^3$ such that
  \begin{equation}
    \label{structure 3}
      \S_Y(\E)=\bigcup_{\g\in\Gamma(\E)}\INT(\g)\,,\qquad\S_T(\E)=\bigcup_{\g\in\Gamma(\E)}\bd(\g)\,.
  \end{equation}
  Finally, for every $x\in\pa\E$ there exists a cone $X$ in $\R^3$ (with vertex at the origin) such that, with $\hd_{B_R}$ denoting the Hausdorff distance localized in the ball $B_R$ (see \eqref{hausdorff distance localized def} below), one has
  \begin{equation}
    \label{structure 4}
      \lim_{r\to 0^+}\hd_{B_R}\Big(\frac{\pa\E-x}r,X\Big)=0\,,\qquad\forall R>0\,.
  \end{equation}
  Here, if $x\in\pa^*\E$, then $X$ is a plane, and if $x\in\S(\E)$, then $X=g(Y)$ or $X=g(T)$ for a linear isometry $g$ of $\R^3$ depending on whether $x\in\S_Y(\E)$ or $x\in\S_T(\E)$. $X$ is called the tangent cone to $\pa\E$ at $x$, and we set $X=T_x\pa\E$.
\end{theorem}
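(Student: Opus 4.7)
The plan is to deduce the theorem as an organization of Taylor's regularity theorem \cite{taylor76} for $(M,\e,\de)$-minimal sets in $\R^3$, reformulated in the cluster setting. First I would verify that $\pa^*\E$ falls under Taylor's hypotheses. Since $\E$ is $(\Lambda,r_0)$-minimizing, standard comparison arguments (recalled in \cite[Section 3]{CiLeMaIC1}) show that $\pa^*\E$ is almost-minimal in Almgren's sense at all sufficiently small scales, with the $\Lambda$-term absorbed into an $(M,\e,\de)$-minimality condition of the form $\e(r)=C\,\Lambda\,r$. The chamber structure of $\E$ ensures that the interface carries unit multiplicity, so no further reduction is needed.

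Second, I would invoke the tangent cone classification of \cite{taylor76}: at every $x\in\pa\E$ the density $\theta_{\pa\E}(x)$ exists and equals one of the three values $\pi$ (plane), $\theta_Y(0)$, or $\theta_T(0)$, with the corresponding blow-up being a plane, a $Y$-cone, or a $T$-cone, respectively. Since these three densities are distinct and $\theta_{\pa\E}$ is upper semicontinuous at singular points, the partition \eqref{structure 1} is well defined and \eqref{structure 4} follows from the existence and uniqueness of the blow-up. This already handles the classification statements.

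Third, I would extract the local $C^{1,\a}$-models from \cite{taylor76}: every $x\in\pa\E$ admits a neighborhood $U$ and a $C^{1,\a}$-diffeomorphism $\Psi_x\colon U\to\Psi_x(U)$ mapping $\pa\E\cap U$ onto $T_x\pa\E\cap\Psi_x(U)$. At $x\in\S_T(\E)$ the model cone $T$ has an isolated vertex, so $\S_T(\E)\cap U=\{x\}$ and $\S_T(\E)$ is locally finite; at $x\in\S_Y(\E)$ the singular line of $Y$ pulls back to a $C^{1,\a}$-arc through $x$; at $x\in\pa^*\E$ the set $\pa^*\E$ is locally a $C^{1,\a}$-graph. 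To assemble $\Gamma(\E)$ I would take the closures of the connected components of $\S_Y(\E)\setminus\S_T(\E)$, each of which is a $C^{1,\a}$-curve whose boundary (if any) lies in $\S_T(\E)$, giving \eqref{structure 3}. Dually, $\Ss(\E)$ is defined as the collection of closures in $\pa\E$ of the connected components of $\pa\E\setminus\S_Y(\E)$; each such $S$ is a closed connected topological surface with boundary, $S^*=S\setminus\S_T(\E)$ is $C^{1,\a}$, its interior lies in $\pa^*\E$ and its boundary in $\S_Y(\E)$, yielding \eqref{structure 2}. Local finiteness of $\Ss(\E)$ and $\Gamma(\E)$ follows from the discreteness of $\S_T(\E)$ and from the fact that at each $Y$-point exactly three surfaces meet along one arc and at each $T$-point six arcs abut.

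The main obstacle is conceptual rather than technical: Taylor's theorem is stated for $(M,\e,\de)$-minimal sets, whereas here one works with $(\Lambda,r_0)$-minimizing clusters carrying a chamber labelling. The hard part will be setting up the equivalence between the two frameworks at small scales cleanly enough that the $C^{1,\a}$-diffeomorphism conclusions transfer verbatim, and then verifying that the globally defined $\Ss(\E)$ and $\Gamma(\E)$ are genuinely locally finite families of \emph{connected} surfaces and curves rather than unions of them. Once these bookkeeping issues are settled, the theorem is essentially a restatement of \cite{taylor76} in the language needed for the improved convergence results of this paper.
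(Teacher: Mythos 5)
Your overall strategy coincides with the paper's (reduce to the Taylor/David regularity theory for two-dimensional almost-minimal sets, then assemble the stratification), but as written the proposal has two genuine gaps.

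First, the passage from $(\Lambda,r_0)$-minimality of the cluster to Almgren-type $(\MM,\xi,\de)$-minimality of $\pa\E$ is not a ``standard comparison argument'' and is not settled by remarking that interfaces have unit multiplicity. The minimality condition must be verified for \emph{arbitrary} Lipschitz deformations $f$, in particular non-injective ones, and for such $f$ the image $f(\pa\E)$ is not the boundary of any obvious competitor cluster; the cluster inequality \eqref{lambdar0 minimizing cluster} can only be tested against genuine clusters $\F$ with $\E(h)\Delta\F(h)\cc B_{x,r}$. The paper devotes Theorem \ref{thm david plug} and Appendix \ref{appendix david plug} to exactly this point: one pushes forward the currents $\EE^n\llcorner\E(h)$ by $f$, takes the mod--$2$ carriers $F_h=\car f_\#T_h$, and checks via Proposition \ref{lemma F} that these produce a competitor cluster with $P(\F;W)\le\H^{n-1}(f(W\cap\pa\E))$ and with volume deficit $O(r^n)$ absorbed by the $\Lambda$-term. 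Your plan needs this construction (or a substitute) before Taylor's theorem can be invoked at all.

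Second, the global assembly is where the real bookkeeping lies, and the proposal both misdefines $\Ss(\E)$ and omits the argument that makes \eqref{structure 2}--\eqref{structure 3} true. Taking closures of connected components of $\pa\E\setminus\S_Y(\E)$ does not work: this set contains $\S_T(\E)$, and near a point of type $T$ the vertex connects all six local sheets (note: a $T$-point is abutted by four $Y$-arcs and six sheets, not ``six arcs''), so the closure of such a component fails to be a topological surface with boundary at the vertex; one must take components of $\pa^*\E=\pa\E\setminus\S(\E)$, as the paper does. More substantively, even with the correct definition you must rule out that a single connected component $\s$ of $\S_Y(\E)$ reaches a $T$-point along two of the four local arcs (in which case $\cl(\s)$ would not be a $C^{1,\a}$-curve with boundary and $\S_T(\E)=\bigcup\bd(\g)$ could fail), and likewise that a single component of $\pa^*\E$ abuts a $Y$-arc along two of the three local sheets (which would destroy the half-disk structure of $S^*$ at $\bd(S^*)$). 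The paper excludes both via a chamber-adjacency argument: each component of $\S_Y(\E)$ is adjacent to a fixed triple of chambers while the four arcs at a $T$-point carry four distinct triples, pairwise sharing exactly two labels, and each component of $\pa^*\E$ lies in a single interface $\E(h,k)$ while the three sheets along a $Y$-arc carry three distinct pairs. Your proposal asserts the conclusions of this step without providing the mechanism, which is the actual content of the proof beyond quoting \cite{taylor76}.
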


\begin{remark}[Clusters of class $C^{2,1}$]\label{remark nuS}
  {\rm As a byproduct of \eqref{structure 4} one sees that if $S\in\Ss(\E)$ and $\nu_S\in C^{0,\a}(\INT(S);\SS^2)$ is such that $T_xS=\nu_S(x)^\perp$ for every $x\in\INT(S)$, then $\nu_S$ can be extended by continuity to the whole $S$. If $\pa^*\E$ is a surface of class $C^2$, then $\nabla^S\nu_S$ is a continuous $\R^n\otimes\R^n$-field on $\INT(S)$ (here we are using the convention adopted in \cite{CiLeMaIC1} that tangential gradients to manifolds are seen as linear maps on the whole ambient tangent space which take zero values on the orthogonal directions to the manifold). Correspondingly, we say that a {\it $(\Lambda,r_0)$-minimizing cluster $\E$ in $\R^3$ is of class $C^{2,1}$} if $\pa^*\E$ is of class $C^{2,1}$ and if, for every $S\in\Ss(\E)$, $\nabla^S\nu_S$ can be extended by continuity to the whole $S$ in such a way that for each $x,y\in S$ one has
  \begin{eqnarray}\label{cluster C21}
  \begin{split}
    &\|\nabla^S\nu_S(y)-\nabla^S\nu_S(x)\|\le C\,|x-y|\,,
    \\
    &|\nu_S(y)-\nu_S(x)-\nabla^S\nu_S(x)[x-y]|\le C\,|x-y|^2\,,
    \\
    &|\nu_S(x)\cdot(y-x)-\nabla^S\nu_S(x)[x-y]\cdot(y-x)|\le C\,|x-y|^3\,,
  \end{split}
  \end{eqnarray}
  for some constant $C$ depending on $\E$ only, and where $\|\cdot\|$ denotes the operator norm on $\R^n\otimes\R^n$. We notice that by the higher regularity results of \cite{kindspruck} each isoperimetric cluster in $\R^3$ is of class $C^{2,1}$ (actually analytic). Moreover, \eqref{cluster C21} implies that each $\g\in\Gamma(\E)$ is of class $C^{2,1}$.}
\end{remark}

\subsection{The improved convergence theorem and some applications}
If $\E$ is a $(\Lambda,r_0)$-minimizing cluster in $\R^3$, then we say that $f\in C^{1,\a}(\pa\E;\R^3)$ provided $f:\pa\E\to\R^3$ is continuous on $\pa\E$, $f\in C^{1,\a}(S^*)$ for every $S\in\Ss(\E)$ and
\[
\|f\|_{C^{1,\a}(\pa\E)}:=\sup_{S\in\Ss(\E)}\|f\|_{C^{1,\a}(S^*)}<\infty\,.
\]
If $\E$ and $\F$ are $(\Lambda,r_0)$-minimizing clusters in $\R^3$, then $f$ is a $C^{1,\a}$-diffeomorphism between $\pa\E$ and $\pa\F$ provided $f$ is an homeomorphism between $\pa\E$ and $\pa\F$, $f\in C^{1,\a}(\pa\E;\R^3)$, $f^{-1}\in C^{1,\a}(\pa\F;\R^3)$ and
\[
f(\S_Y(\E))=\S_Y(\F)\,,\qquad f(\S_T(\E))=\S_T(\F)\,.
\]
Finally, if $\nu_\E:\pa^*\E\to S^2$ is any Borel vector field with $\nu_\E(x)\in\{\nu_{\E(h)}(x),\nu_{\E(k)}(x)\}$ for $x\in\E(h,k)$ and $f:\pa^*\E\to\R^3$, then we define the tangential component of $f$ with respect to $\pa^*\E$, $\ttau_\E f:\pa^*\E\to\R^3$, as
\[
\ttau_\E f(x)=f(x)-(f(x)\cdot\nu_\E(x))\,\nu_\E(x)\,,\qquad x\in\pa^*\E\,.
\]
Our improved convergence theorem takes then the following form (here, $\a\in(0,1)$ is as in Theorem \ref{thm structure in R3}).

\begin{theorem}\label{thm improved convergence 3d}
  Given $\Lambda\ge0$, $r_0>0$ and a bounded $(\Lambda,r_0)$-minimizing cluster $\E$ in $\R^3$ of class $C^{2,1}$, then there exist positive constants $\mu_0$ and $C_0$ (depending on $\Lambda$ and $\E$) with the following property. If $\{\E_k\}_{k\in\N}$ is a sequence of $(\Lambda,r_0)$-minimizing clusters in $\R^3$ such that $\d(\E_k,\E)\to 0$ as $k\to\infty$, then for every $\mu<\mu_0$ there exist $k(\mu)\in\N$ and a sequence of maps $\{f_k\}_{k\ge k(\mu)}$ such that each $f_k$ is a $C^{1,\a}$-diffeomorphism between $\pa\E$ and $\pa\E_k$ with
  \begin{equation}\label{listadiproprieta}
\begin{split}
    \|f_k\|_{C^{1,\a}(\pa\E)}&\le C_0\,,
    \\
    \lim_{k\to\infty}\|f_k-\Id\|_{C^1(\pa\E)}&=0\,,
    \\
    \|\ttau_\E(f_k-\Id)\|_{C^1(\pa^*\E)}&\le\frac{C_0}\mu\,\|f_k-\Id\|_{C^1(\S_Y(\E))}\,,
    \\
    \ttau_\E(f_k-\Id)&=0\,,\qquad\mbox{on $\pa\E\setminus I_\mu(\S(\E))$}\,.
\end{split}
\end{equation}
\end{theorem}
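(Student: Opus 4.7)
The plan is to construct each $f_k$ by working through the strata of $\pa\E$ in order of increasing dimension, following the scheme of \cite{CiLeMaIC1}: first match the $T$-points, then extend to the $Y$-curves, then to the regular surfaces, and finally patch with a normal-graph deformation outside a $\mu$-neighborhood of $\S(\E)$. The preliminary ingredient is to upgrade the $L^1$-convergence $\E_k\to\E$ to local Hausdorff convergence of $\pa\E_k$ to $\pa\E$ (from density estimates for $(\Lambda,r_0)$-minimizing clusters), and then to establish stability of the singular types: using Taylor's classification together with the Allard-type $\e$-regularity at each admissible tangent cone, any sequence $x_k\in\S_T(\E_k)$ converging to $x\in\pa\E$ must have $x\in\S_T(\E)$, and similarly for $\S_Y$. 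For $k$ large this gives a bijection $p\mapsto p_k$ between $\S_T(\E)$ and $\S_T(\E_k)$ with $p_k\to p$, together with a bijection $\g\mapsto\g_k$ between $\Gamma(\E)$ and $\Gamma(\E_k)$, with $\g_k\to\g$ in Hausdorff distance and, by the $C^{1,\a}$-bound of Theorem \ref{thm structure in R3} (upgraded to $C^{2,1}$ for $\E$ via Remark \ref{remark nuS}), in $C^{1,\a}$ after reparametrization.

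With this skeletal correspondence, I would define $f_k$ on $\S_T(\E)$ by $f_k(p)=p_k$ and then extend $f_k$ along each $\g\in\Gamma(\E)$ by an arc-length reparametrization onto $\g_k$, giving a $C^{1,\a}$-diffeomorphism $\g\to\g_k$ converging to the identity in $C^1$ and coinciding with the assigned values at $\bd(\g)$. Next, for each $S\in\Ss(\E)$ one must extend $f_k$ from $\bd(S^*)\subset\S_Y(\E)$ to $S^*$ as a $C^{1,\a}$-diffeomorphism onto the corresponding $S^*_k\subset\pa\E_k$. Here I would invoke the abstract diffeomorphism construction of \cite[Theorem 3.1]{CiLeMaIC1}, which is tailored precisely to this situation: given boundary data and a manifold-with-boundary target that is $C^{1,\a}$-close to the domain, it produces a $C^{1,\a}$-diffeomorphism that is almost-normal away from the boundary and whose tangential component is quantitatively controlled by its boundary values.

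To obtain the estimates in \eqref{listadiproprieta}, I would then patch this construction, performed inside a tube of radius $\mu$ around $\S(\E)$, with the straightforward normal-graph representation of $\pa\E_k$ over $\pa\E$ available on $\pa\E\setminus I_\mu(\S(\E))$ (via Allard regularity and the $C^{2,1}$-assumption on $\E$). The interpolation is carried out by a cutoff of scale $\mu$, whose derivative is responsible for the factor $1/\mu$ in
\[
\|\ttau_\E(f_k-\Id)\|_{C^1(\pa^*\E)}\le\frac{C_0}{\mu}\,\|f_k-\Id\|_{C^1(\S_Y(\E))}\,,
\]
while outside the tube the tangential component vanishes identically by construction. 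The uniform bound $\|f_k\|_{C^{1,\a}(\pa\E)}\le C_0$ and the convergence $f_k\to\Id$ in $C^1(\pa\E)$ follow from the corresponding bounds on each of the local pieces.

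The main obstacle is the work near the $T$-points. Unlike the planar case in \cite{CiLeMaIC1}, where singular points are isolated and carry no internal structure, each $T$-point here is the vertex of a full three-dimensional configuration: three $Y$-curves emanating from $p$ bound four $C^{1,\a}$-surfaces meeting at $p$ along the cone $T$. I would need a quantitative local form of Taylor's theorem stating that, after rescaling around $p$, the configuration for $\E_k$ is $C^{1,\a}$-close to a rigid motion of the reference $T$-cone together with its three $Y$-edges, obtained by a compactness-plus-contradiction argument exploiting the classification of minimizing cones and the integrability of the infinitesimal symmetries of $T$. Matching the three emanating $Y$-curves, extending the match to the four tetrahedral faces with $C^{1,\a}$-control up to the vertex, and then gluing all these local models into a single structured diffeomorphism with the prescribed tangential behavior and with constants uniform in $\mu$ and $k$, is the technical heart of the argument.
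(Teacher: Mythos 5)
Your overall architecture coincides with the paper's (Hausdorff convergence of boundaries, matching of singular strata in increasing dimension, curve-to-surface gluing with a cutoff at scale $\mu$ producing the $1/\mu$ factor), but there are two genuine gaps. First, the bijections $\S_T(\E)\leftrightarrow\S_T(\E_k)$ and $\Gamma(\E)\leftrightarrow\Gamma(\E_k)$ do not follow from what you invoke. The $\e$-regularity/upper-semicontinuity argument you describe only gives one direction: singular points of $\E_k$ can only accumulate on singular points of $\E$ of the same or ``worse'' type, i.e. $\S(\E_k)\subset I_\e(\S(\E))$ and $\S_T(\E_k)\subset I_\e(\S_T(\E))$. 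To build the correspondence $p\mapsto p_k$, $\g\mapsto\g_k$ you also need the converse (no singularity disappears): every $Y$-point and every $T$-point of $\E$ must be approximated by points of $\E_k$ of the \emph{same} type. A priori $\pa\E_k$ could be Hausdorff-close to a $Y$-curve or to a $T$-configuration of $\E$ while consisting only of regular (resp.\ regular and $Y$) points, and no $\e$-regularity theorem excludes this. In the paper this is exactly the content of Theorem \ref{thm convergence singular sets}, whose proof is a separate topological argument: one blows up to $(\de_j,\de_j^{-1})$-minimizing clusters converging to the cones $Y$ or $T$, and rules out the absence of singularities by slicing the associated multiplicity-one integral currents and obtaining a contradiction with Stokes' theorem ($0=1$ pairing a $0$-form against a sliced curve). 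You also need, for the claimed bijection, that near each $T$-point of $\E$ there is \emph{exactly one} $T$-point of $\E_k$; this uses David's quantitative theorem applied to $\E_k$ at a scale fixed by $\E$ (Lemma \ref{lemma covering sing sets}), not just convergence.

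Second, the quantitative local model near singular points that you say you ``would need'' and propose to derive by compactness plus integrability of the infinitesimal symmetries of $T$ is not supplied, and your proposed route amounts to re-proving Taylor/David with rates; the paper instead imports Theorem \ref{thm david} (David), after the nontrivial bridge of Theorem \ref{thm david plug} (minimality of the cluster must be tested against non-injective Lipschitz deformations, which requires the current-theoretic construction of Proposition \ref{lemma F}), and then transfers the charts to $\E_k$ at uniform scales in Theorem \ref{thm convergence singular sets}. Relatedly, your plan to invoke \cite[Theorem 3.1]{CiLeMaIC1} directly for the extension from $\bd(S^*)$ to $S^*$ fails as stated: $\bd_\tau(S)$ is not a $C^{1,\a}$ manifold but a cycle with corners at the $T$-points, which is precisely why the paper cannot apply that theorem off the shelf and must adapt its proof, the new ingredient being the Whitney-type extension Lemma \ref{lemma extension from gamma} that extends the boundary data $\bar a_k,\bar b_k$ across the corners with $C^1$-norm controlled by the curve norms. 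With these three ingredients (no-disappearance of strata, uniform-scale David charts, corner-adapted extension) your outline matches the paper's proof; without them the construction of $f_k$ and the estimate $\|\ttau_\E(f_k-\Id)\|_{C^1(\pa^*\E)}\le\frac{C_0}\mu\|f_k-\Id\|_{C^1(\S_Y(\E))}$ do not go through.
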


\begin{remark}
  {\rm The last property in \eqref{listadiproprieta} says that $f_k$ is almost-normal on $\pa\E$, meaning that it is a normal deformation of $\pa\E$ at a fixed distance from $\S(\E)$. Actually more is true, as it will become apparent from the proof of Theorem \ref{thm structure in R3}: the diffeomorphisms $f_k$ is also almost normal on $\S_Y(\E)$. More precisely, for each $\g\in\Gamma(\E)$, denoting by $\pi^\g_x v$ the projection of $v\in\R^3$ on $T_x\g$, and setting $(\pi^\g h)(x)=\pi^\g_x(h(x))$ for $h:\g\to\R^3$, then
  \begin{equation}
    \label{theend}
      \pi^\g(f_k-\Id)=0\quad\mbox{on $\g\setminus I_\mu(\S_T(\E))$}\,,\qquad \|\pi^\g(f_k-\Id)\|_{C^1(\g)}\le\frac{C_0}\mu\,\|f_k-\Id\|_{C^0(\g\cap\S_T(\E))}\,,
  \end{equation}
  see in particular Lemma \ref{lemma fkstar Y} and Lemma \ref{lemma fkstar T part two} below. Notice that the penultimate condition in \eqref{listadiproprieta} and the second condition in \eqref{theend} express a quantitative control on the tangential displacements in terms of the corresponding normal displacements.}
\end{remark}

There are of course many different applications of Theorem \ref{thm improved convergence 3d} that one may wish to explore. One direction is definitely the discussion of global stability inequalities. In the case of the planar counterpart of Theorem \ref{thm improved convergence 3d}, namely \cite[Theorem 1.5]{CiLeMaIC1}, this kind of analysis has been performed on planar double-bubbles \cite{CiLeMaFUGLEDE} and hexagonal honeycombs \cite{carocciamaggi}. Another interesting direction is discussing the relation between strict stability (positive second variation) and local minimality. Leaving for future investigations these kind of questions, we discuss here two more immediate consequences of Theorem \ref{thm improved convergence 3d}, whose planar analogs have been presented in \cite[Theorem 1.9, Theorem 1.10]{CiLeMaIC1}.

The first result is an application to the classification problem for isoperimetric clusters \cite[VI.1(6)]{Almgren76}. We introduce an equivalence relation $\approx$ on the family of clusters in $\R^3$ that are $(\Lambda,r_0)$-minimizing cluster for some choice of $\Lambda\ge0$ and $r_0>0$, by setting $\E\approx \F$ if and only if there exists a $C^{1,\a}$-diffeomorphism between $\pa\E$ and $\pa\F$.

\begin{theorem}\label{thm classification}
  For every $m_0\in\R^N_+$ there exists $\de>0$ such that if $\Om$ is the family of the isoperimetric clusters in $\R^3$ with $|\vol(\E)-m_0|<\de$, then $\Om/_\approx$ is a finite set.
\end{theorem}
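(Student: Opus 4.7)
The plan is to argue by contradiction and reduce the statement to Theorem \ref{thm improved convergence 3d}. If the claim fails, then for every $\de>0$ the quotient $\Om/\approx$ is infinite, so recursively choosing an isoperimetric cluster not equivalent to any of the previously selected finite collection produces a sequence $\{\E_k\}_{k\in\N}$ of pairwise non-equivalent isoperimetric clusters in $\R^3$ with $\vol(\E_k)\to m_0$.

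The first step will be a compactness statement. Using the standard uniform bounded-diameter estimate for isoperimetric clusters with volumes in a compact subset of $\R^N_+$ (typically obtained via a nucleation/truncation argument) and translating each $\E_k$ if necessary, one may arrange that all the $\E_k$ lie in a common ball of $\R^3$. Passing to a subsequence, $\E_k\to\E$ in $L^1$ for some cluster $\E$ with $\vol(\E)=m_0$. Lower semicontinuity of the perimeter together with continuity of the isoperimetric profile at $m_0$ then shows that $\E$ itself is isoperimetric; in particular $\E$ is bounded, $(\Lambda,r_0)$-minimizing for some $\Lambda\ge0$ and $r_0>0$, and of class $C^{2,1}$ by Remark \ref{remark nuS}.

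Next I will show that, after possibly enlarging $\Lambda$ and shrinking $r_0$, each $\E_k$ is also $(\Lambda,r_0)$-minimizing with the same constants. This reduces to the fact that the pressures (Lagrange multipliers) of isoperimetric clusters depend continuously on the prescribed volume, hence are uniformly bounded for $\vol(\E_k)$ close to $m_0$; a volume-fixing diffeomorphism argument, comparing each $\E_k$ with competitors supported in small balls and correcting the resulting volume defect in a distant fixed region, then transfers these uniform pressure bounds into a uniform $(\Lambda,r_0)$-minimality constant along the sequence.

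With these ingredients in place, Theorem \ref{thm improved convergence 3d} applies to the sequence $\E_k\to\E$ and produces, for all sufficiently large $k$, a $C^{1,\a}$-diffeomorphism $f_k:\pa\E\to\pa\E_k$ respecting the stratification, so that $\E\approx\E_k$. By transitivity of $\approx$ one then has $\E_k\approx\E_j$ for all sufficiently large $k,j$, contradicting the pairwise non-equivalence of the $\E_k$. The main technical obstacle is the verification of the uniform diameter bound and the uniform $(\Lambda,r_0)$-minimality of the isoperimetric sequence — both well-known for isoperimetric clusters, but requiring care — after which the improved convergence theorem carries the rest of the argument.
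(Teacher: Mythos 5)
Your argument — contradiction, translation plus a uniform diameter bound to get $L^1$-compactness, identification of the limit as a bounded isoperimetric (hence $(\Lambda,r_0)$-minimizing and $C^{2,1}$) cluster with volume $m_0$, uniform $(\Lambda,r_0)$-minimality along the sequence, and then Theorem \ref{thm improved convergence 3d} to produce stratification-preserving $C^{1,\a}$-diffeomorphisms contradicting pairwise non-equivalence — is exactly the route the paper intends, which it only sketches by referring to the planar case in \cite{CiLeMaIC1} and to the compactness discussion in the appendix of \cite{CiLeMaFUGLEDE}. Your proposal is correct and coincides with that approach, including correctly singling out the compactness and uniform minimality steps as the only delicate points in $\R^3$.
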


One can also qualitatively describe global minimizers of the cluster perimeter in the presence of a sufficiently small potential energy  term.

\begin{theorem}
  \label{thm potenziale}
  Let $m_0\in\R^N_+$ be such that there exists a unique (modulo isometries) isoperimetric cluster $\E_0$ in $\R^3$ with $\vol(\E_0)=m_0$, and let $g:\R^3\to[0,\infty)$ be a continuous function with $g(x)\to\infty$ as $|x|\to\infty$. Then there exists $\de_0>0$ (depending on $\E_0$ and $g$ only) such that for every $\de<\de_0$ and $|m-m_0|<\de_0$ there exists a minimizer $\E$ in
  \begin{equation}
  \label{partitioning problem with potential delta}
  \inf\Big\{P(\E)+\de\,\sum_{h=1}^N\int_{\E(h)}g(x)\,dx:\vol(\E)=m\Big\}\,,
  \end{equation}
  and necessarily it must be $\E\approx\E_0$.
\end{theorem}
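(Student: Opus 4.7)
The plan is to combine a direct-method existence argument with a contradiction argument based on Theorem \ref{thm improved convergence 3d}.

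For existence of a minimizer when $\de$ is small and $m$ is close to $m_0$, I would take a minimizing sequence $\{\F_j\}$ for \eqref{partitioning problem with potential delta}. Testing with a small volume-adjustment of a translate of $\E_0$ placed in a region where $g$ is bounded provides a uniform upper bound $M_0$ on both $P(\F_j)$ and $\de\,\sum_h\int_{\F_j(h)}g\,dx$. The coercivity of $g$ then forces each chamber of $\F_j$ to sit inside a fixed ball $B_R$, with $R=R(\E_0,g)$, since otherwise the potential term would exceed $M_0$. Standard compactness for sets of finite perimeter produces an $L^1$-limit $\E$, and lower semicontinuity of $P$ plus continuity of the potential under confined $L^1$-convergence yield that $\E$ is a minimizer.

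Next, I claim that every such minimizer is a $(\Lambda,r_0)$-minimizing cluster in $\R^3$ with $\Lambda,r_0$ depending only on $\E_0$ and $g$. Given a local competitor $\F$ with $\E(h)\Delta\F(h)\cc B_{x,r}$ and $r<r_0$, I would use the standard volume-fixing variations available on the isoperimetric cluster $\E_0$: a family of diffeomorphisms supported away from a reference ball that restore any small volume perturbation $\xi$ at a perimeter cost at most $C_*|\xi|$, with $C_*,r_0$ intrinsic to $\E_0$. Since $\E$ is $L^1$-close to $\E_0$ (by the uniform upper bound $M_0$), these diffeomorphisms act on $\E$ with comparable bounds. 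Applying them to adjust $\F$ outside $B_{x,r}$ and absorbing the potential contribution $\de\,\sup_{B_R}g$ gives \eqref{lambdar0 minimizing cluster} with $\Lambda=C_*+\de_0\,\sup_{B_R}g$.

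Suppose now the conclusion $\E\approx\E_0$ fails. Then there are sequences $\de_j\downarrow 0$, $m_j\to m_0$ and corresponding minimizers $\E_j\not\approx\E_0$. By the previous two steps, the $\E_j$ are uniformly confined in $B_R$ and uniformly $(\Lambda,r_0)$-minimizing, hence of uniformly bounded perimeter. After extraction, $\E_j\to\E_\infty$ in $L^1$ with $\vol(\E_\infty)=m_0$. For any competitor $\F$ with $\vol(\F)=m_0$, one produces $\F_j$ with $\vol(\F_j)=m_j$ by a small perimeter-bounded volume adjustment, and letting $j\to\infty$ in the minimality of $\E_j$ yields $P(\E_\infty)\le P(\F)$; hence $\E_\infty$ is isoperimetric with volume $m_0$. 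By the uniqueness hypothesis, modulo a linear isometry $\E_\infty=\E_0$, so one may assume $\E_j\to\E_0$ in $L^1$. By Remark \ref{remark nuS}, $\E_0$ is of class $C^{2,1}$, so Theorem \ref{thm improved convergence 3d} produces, for all large $j$, a $C^{1,\a}$-diffeomorphism $f_j:\pa\E_0\to\pa\E_j$ preserving the $\S_Y/\S_T$ stratification; hence $\E_j\approx\E_0$, contradicting the choice of $\{\E_j\}$. The main obstacle is the uniform-in-$\de$ $(\Lambda,r_0)$-minimality of the second step: one must ensure that the volume-fixing radius $r_0$ and constant $C_*$ do not deteriorate as $(\de,m)\to(0,m_0)$, which is handled by fixing these constants on $\E_0$ itself and transferring them to nearby competitors via the $L^1$-proximity guaranteed by the upper bound $M_0$.
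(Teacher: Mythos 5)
Your overall architecture (existence by the direct method, uniform $(\Lambda,r_0)$-minimality, compactness, identification of the limit with $\E_0$ via uniqueness, then Theorem \ref{thm improved convergence 3d} to get $\E\approx\E_0$) is the same deduction the paper has in mind; the paper itself only sketches it, referring to the planar case and to the companion paper for the delicate point. But your treatment of exactly that delicate point has a genuine gap. The potential bound you extract from the energy comparison is $\de\sum_h\int_{\E(h)}g\le M_0$, i.e. $\sum_h\int_{\E(h)}g\le M_0/\de$, and this blows up as $\de\to0$. Hence the coercivity of $g$ does \emph{not} confine the chambers in a ball $B_R$ with $R=R(\E_0,g)$: the confinement radius you get this way depends on $\de$, and in the contradiction step with $\de_j\downarrow0$ the claim ``the $\E_j$ are uniformly confined in $B_R$'' does not follow. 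This is precisely the issue the paper flags (``in $\R^3$ obtaining compactness from perimeter bounds is a subtler issue'') and resolves by the truncation/boundedness arguments of the companion paper's Appendix A: one must show, e.g. by cutting off small far-away volume and restoring it at controlled perimeter cost (using the $(\Lambda,r_0)$-minimality or the minimality in \eqref{partitioning problem with potential delta} directly), that the minimizers have uniformly bounded diameter, or otherwise rule out volume escaping to infinity along $\de_j\to0$. Without this, the extraction of an $L^1$-limit with $\vol(\E_\infty)=m_0$ is unjustified.

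A second, related problem is the justification of uniform $(\Lambda,r_0)$-minimality: you transfer volume-fixing variations from $\E_0$ to $\E$ ``since $\E$ is $L^1$-close to $\E_0$ (by the uniform upper bound $M_0$)''. An energy upper bound does not imply $L^1$-proximity to $\E_0$; that proximity is exactly what the compactness-plus-uniqueness step is supposed to produce, and even then only up to an isometry and only along the contradiction subsequence, so as written the argument is circular. The standard repair is to reorder: first establish compactness and, using lower semicontinuity and the uniqueness hypothesis, that (after applying suitable isometries) $\d(\E_j,\E_0)\to0$; only then invoke volume-fixing variations with constants fixed on $\E_0$ (stable in a small $L^1$-neighborhood) to conclude that the $\E_j$ are $(\Lambda,r_0)$-minimizing with $\Lambda,r_0$ independent of $j$, noting that the potential contributes only an error $\de_j\,\|g\|_{C^0(B_R)}\,\d(\E_j,\F)$ once boundedness is known; finally apply Theorem \ref{thm improved convergence 3d} (with the isometric copy of $\E_0$, which is bounded and of class $C^{2,1}$ by Remark \ref{remark nuS}) to get $\E_j\approx\E_0$ for large $j$ and reach the contradiction. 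With these two points repaired your proof coincides with the intended one.
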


Theorem \ref{thm classification} and Theorem \ref{thm potenziale} are deduced from Theorem \ref{thm improved convergence 3d} in exactly the same way as \cite[Theorem 1.9 and Theorem 1.10]{CiLeMaIC1} are obtained from \cite[Theorem 1.5]{CiLeMaIC1}. The only significant difference with the planar case is that in $\R^3$ obtaining compactness from perimeter bounds is a subtler issue. Considering that this kind of question has been discussed at length in the companion paper \cite{CiLeMaFUGLEDE}, see in particular Appendix A therein, and taking into account the already considerable length of the present two-part paper, we shall omit a detailed presentation of the proofs of Theorem \ref{thm classification} and Theorem \ref{thm potenziale}.

\subsection{Organization of the paper} In section \ref{section reg} we recall the results of Taylor \cite{taylor76} and, more recently of David \cite{David1,David2}, which provide us with the local description of singular sets needed in order to begin our analysis. In particular, we prove Theorem \ref{thm structure in R3}. In section \ref{section improved convergence part one} we show the stratified Hausdorff convergence of singular sets, while in section \ref{section stratified boundary convergence} we prove the converge of the decomposition of $\pa\E_k$ into curves and surfaces introduced in Theorem \ref{thm structure in R3} to the corresponding decomposition of $\pa\E$. In section \ref{section proof of improved convergence} we finally deduce Theorem \ref{thm improved convergence 3d}, while in Appendix \ref{appendix david plug} we present a technical result bridging between our ``distributional'' context based on the theory of sets of finite perimeter and the theory of $(\MM,\xi,\de)$-minimal sets by Almgren used in Taylor's and David's papers.

\medskip

\noindent {\bf Acknowledgement}: The work of FM was supported by NSF Grants DMS-1265910 and DMS-1361122 The work of GPL has been supported by GNAMPA (INdAM).

\section{Structure of $(\Lambda,r_0)$-minimizing clusters in $\R^3$}\label{section reg} The goal of this section is the proof of Theorem \ref{thm structure in R3}. We first recall the results of Taylor \cite{taylor76} and David \cite{David1,David2} in section \ref{section taylor}. The main result proved here is then Theorem \ref{thm david plug}, section \ref{section david plug}, which enables one to use exploit Taylor's regularity theory to boundaries of $(\Lambda,r_0)$-minimizing clusters. Finally, in section \ref{section proof of structure in R3}, we prove Theorem \ref{thm structure in R3}.

\subsection{Sets and manifolds}\label{section notation} We set $B(x,r)=B_{x,r}$ for the ball of center $x\in\R^n$ and radius $r>0$, and set $B_r=B_{0,r}=B(0,r)$, $B=B_1$, $\SS^{n-1}=\pa B$. Given $S\subset\R^n$, $\mathring{S}$, $\pa S$, $\cl(S)$ are the interior, the boundary and the closure of $S$, while $I_\e(S)=\{x\in\R^n:\dist(x,S)<\e\}$ is the $\e$-neighborhood of $S$, $\e>0$. Given $S,T\subset\R^n$ we define the Hausdorff distance between  $S$ and $T$ localized in $K\subset\R^n$ as
\begin{equation}
  \label{hausdorff distance localized def}
  \hd_K(S,T)=\max\Big\{\sup\{\dist(y,S):y\in T\cap K\},\sup\{\dist(y,T):y\in S\cap K\}\Big\}\,,
\end{equation}
and set $\hd_{x,r}(S,T)=\hd_{B_{x,r}}(S,T)$ and $\hd(S,T)=\hd_{\R^n}(S,T)$. If $S$ is a $k$-dimensional (embedded) $C^1$-manifold in $\R^n$, then we set $\dist_S$ for the geodesic distance on $S$ and denote by $N_\e(S)$ the normal $\e$-neighborhood to $S$. If $S$ is a $C^1$-manifold with boundary in $\R^n$, then $\INT(S)$ and $\bd(S)$ denote, respectively, the interior and the boundary points of $S$. If $S$ is a topological manifold with boundary in $\R^n$, then we use $\bd_\tau(S)$ for the boundary points of $S$, and we set
\begin{equation}
  \label{S rho}
  [S]_\rho=S\setminus I_\rho(\bd_\tau(S))\,,\qquad\forall \rho>0\,.
\end{equation}
The terms curve, surface and hypersurface are used in place of $1$-dimensional manifold, $2$-dimensional manifold and $(n-1)$-dimensional manifold in $\R^n$. If $S$ is a $k$-dimensional $C^1$-manifold in $\R^n$, $x\in S$, and $f:S\to\R^m$, then we set
\[
\nabla^Sf(x)[v]=
\begin{cases}
\lim\limits_{t\to 0}\frac{f(\g(t))-f(x)}{t}\,&\mbox{if $v\in T_xS$, $\g\in C^1((-\e,\e);S)$, $\g(0)=x$, $\g'(0)=v$}\,,
\\
0&\mbox{if $v\in (T_xS)^{\perp}$}\,.
\end{cases}
\]
and we let $\|f\|_{C^1(S)}=\sup_{x\in S}|f(x)|+\|\nabla^S f(x)\|$, where $\|L\|=\sup\{|L[v]|:|v|= 1\}$ for every linear map $L:\R^n\to\R^m$. For $\a\in(0,1]$ and $S$ of class $C^{1,\a}$, we set
\begin{eqnarray*}
  [\nabla^Sf]_{C^{0,\a}(S)}&=&\sup_{x,y\in S,\,x\ne y}\frac{\|\nabla^Sf(x)-\nabla^Sf(y)\|}{|x-y|^\a}\,,
  \\
  \|f\|_{C^{1,\a}(S)}&=&\sup_{x\in S}|f(x)|+\|\nabla^Sf(x)\|+[\nabla^Sf]_{C^{0,\a}(S)}\,.
\end{eqnarray*}
Finally, given an orientable $k$-dimensional $C^{1,\a}$-manifold $S$ in $\R^n$ which admits a global normal frame of class $C^{1,\a}$ (i.e., such that for every $x\in S$ there exists an orthonormal basis $\{\nu^{(i)}_S(x)\}_{i=1}^{n-k}$ of $(T_xS)^\perp$ with the property $\nu^{(i)}_S\in C^{1,\a}(S)$ for each $i$), we write
\[
\|S\|_{C^{1,\a}}\le L\,,
\]
if
\begin{equation}\label{basta S C1alpha}
\left\{
\begin{split}
  &|\nu^{(i)}_S(x)-\nu^{(i)}_S(y)|\le L\,|x-y|^\a\,,
  \\
  &|\nu^{(i)}_S(x)\cdot(y-x)|\le L|y-x|^{1+\a}\,,
\end{split}\right .
    \qquad\forall x,y\in S\,,i=1,...,n-k\,.
\end{equation}

\subsection{$(\MM,\xi,\de)$-minimal sets and Taylor's theorem}\label{section taylor} Let $\de>0$ and let $\xi:(0,\infty)\to[0,\infty)$ be an increasing function such that $\xi(0^+)=0$. Consider an open set $A\subset\R^n$ and a bounded set $M$ which is relatively closed in $A$. We assume that, for some $1\le k\le n-1$, one has $\H^k(M)<\infty$ and $\H^k(M\cap B_{x,r})>0$ for every $r>0$ and $x\in M$: in this way, $\H^k\llcorner M$ is a finite Radon measure on $A$ with $M=A\cap\spt(\H^k\llcorner M)$. Under these assumptions, one says that $M$ is a ($k$-dimensional) $(\MM,\xi,\de)$-minimal set in $A$ if
\[
\H^k(W\cap M)\le (1+\xi(r))\,\H^k(f(W\cap M))\,,
\]
whenever $f:\R^n\to\R^n$ is a Lipschitz map with $W\cup f(W)\cc A$ and $\diam(W\cup f(W))=r<\de$, where $W=\{f\ne \Id\}$.

Let $\reg(M)$ denote the set of points at which $M$ admits an approximate tangent plane, and set $\s(M)=M\setminus\reg(M)$. As a consequence of \cite[III.3(7)]{Almgren76}, if $M$ is a $(\MM,\xi,\de)$-minimal set in $A$ for $\xi(r)=C\,r^\g$, $\g\in(0,1)$, then $\reg(M)$ is a $k$-dimensional $C^{1,\beta}$-manifold in $A$ for every $\beta<\g/2$, $\s(M)$ is closed, and $\H^k(\s(M))=0$.

In the case $k=2$, $n=3$, Taylor \cite{taylor76} has improved this regularity result to a sharp degree. Let $Y$ and $T$ be the reference cones introduced in section \ref{section introduction}. Taylor shows that if $M$ is a two-dimensional $(\MM,\xi,\de)$-minimal set in $A\subset\R^3$ (for $\xi(r)=C\,r^\g$, $\g\in(0,1)$), then $\theta_M(x)$ exists for every $x\in M$ (see \eqref{theta M}) and
\begin{equation}
  \label{taylor structure 1}
  \reg(M)=\{\theta_M=\pi\}\,,\qquad \s(M)=\s_Y(M)\cup\s_T(M)\,,
\end{equation}
where
\begin{equation}
  \label{taylor structure 2}
\s_Y(M)=\big\{\theta_M=\theta_Y(0)\big\}\,,\qquad\s_T(M)=\big\{\theta_M=\theta_T(0)\big\}\,.
\end{equation}
Moreover, there exists $\a\in(0,\g)$ such that for every $x\in\s(M)$ there exist $r_x>0$, an open set $U\subset\R^3$ with $0\in U$, and a $C^{1,\a}$-diffeomorphism $\Phi$ between $U$ and $B_{x,r_x}\cc A$ with $\Phi(0)=x$ such that,
\begin{equation}\label{taylor structure 3}
  \begin{split}
  &\mbox{if $x\in\s_Y(M)$, then $\Phi(Y\cap U)=M\cap B_{x,r_x}$ and $\Phi(\s_Y(Y)\cap U)=\s_Y(M)\cap B_{x,r_x}$;}
  \\
  &\mbox{if $x\in\s_T(M)$, then $\Phi(T\cap U)=M\cap B_{x,r_x}$ and $\Phi(\s_Y(T)\cap U)=\s_Y(M)\cap B_{x,r_x}$.}
  \end{split}
\end{equation}
Note that $\s_Y(Y)$ is the boundary line shared by the three half-planes defining $Y$, while $\s_Y(T)$ is the union of four open half-lines sharing $0$ as the common origin of their closures. In \cite{David1,David2}, David addresses the regularity of two-dimensional $(\MM,\xi,\de)$-minimal set in $\R^n$ with $n\ge 3$ under a certain admissibility assumption on their possible tangent cones. This assumption is always satisfied when $n=3$. In particular, he recovers Taylor's result, and actually proves some estimates that shall be useful in the sequel. For the sake of clarity we now give a precise statement of the result we shall use. In doing so, it is convenient to say that a closed set $X\subset\R^3$ is a {\it minimal cone} if either $X$ is a plane through the origin, $X=\rho(Y)$, or $X=\rho(T)$ for a linear isometry $\rho$ of $\R^3$. (In particular, $X$ is a cone with respect to $0$.)

\begin{theoremletter}\label{thm david} There exist positive constants $\a\,,\e_0<1$ and $C_0\ge 1$ with following property.

Let $M$ be a closed set in $\R^3$ such that $\H^2\llcorner M$ is a Radon measure and $\H^2(M\cap B_{x,r})>0$ for every $x\in M$ and $r>0$, and assume that for some $L\ge0$ and $\rho_0>0$ one has
\begin{equation}
    \label{david almost-minimal}
    \H^2(M\cap W)\le \H^2(f(M\cap W))+L\,r^3\,,
\end{equation}
whenever $f:\R^3\to\R^3$ is a Lipschitz map with $\diam(W\cup f(W))=r<\rho_0$, $W=\{f\ne \Id\}$.

\noindent (a) There exists $\l$ depending on $L$ and $\rho_0$ such that
\[
r\in(0,\rho_0)\mapsto  \frac{\H^2(M\cap B_{x,r})}{r^2}+\l\,r
\]
is increasing for every $x\in M$; moreover, for every $x\in M$ there exist $r_x\in(0,\rho_0/2)$ and a minimal cone $X'$ with $\theta_{X'}(0)=\theta_M(x)$ such that
\begin{gather}
\label{david hp1}
\e=\frac{\hd_{x,r_x}(M,x+X')}{r_x}+\Big(\frac{\H^2(M\cap B_{x,r_x})}{r_x^2}+\l\,r_x-\theta_M(x)\Big)\le\e_0\,.
\end{gather}

\noindent (b) If $x\in M$, $r_x\in(0,\rho_0/2)$, and $X'$ is a minimal cone with $\theta_{X'}(0)\le \theta_M(x)$ such that \eqref{david hp1} holds, then there exists a minimal cone $X$ such that $\theta_{X}(0)=\theta_M(x)$ and
\begin{gather*}
  \hd_{0,1}(X,X')\le C_0\,\e\,,
  \\
  \frac{\hd_{x,r}(M,x+X)}r\le C_0\,\big(\frac{r}{r_x}\big)^\a\,\e\,,\qquad\forall r<\frac{r_x}{C_0}\,.
\end{gather*}
Moreover, for every $r\le r_x/C_0$ there exists a $C^{1,\a}$-diffeomorphism $\Phi$ between $B_{0,2r}$ and $\Phi(B_{0,2r})$ such that
\begin{equation}  \label{stima david gradienti 2}
  \begin{split}
  &\mbox{$\Phi(0)=x$, $B_{x,r}\subset \Phi(B_{0,2\,r})$, and $B_{0,r/C_0}\subset \Phi^{-1}(B_{x,r})$}\,,
  \\
  &\Phi(X\cap B_{0,2r})\cap B_{x,r}=M\cap B_{x,r}\,,
  \\
  &\Phi(\s_Y(X)\cap B_{0,2r})\cap B_{x,r}=\s_Y(M)\cap B_{x,r}\,,
  \\
  &\|\Phi\|_{C^{1,\a}(B_{0,2r})}+\|\Phi^{-1}\|_{C^{1,\a}(\Phi(B_{0,2r}))}\le C_0\,.
  \end{split}
\end{equation}
\end{theoremletter}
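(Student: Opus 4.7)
The plan is to establish part (a) by a classical almost-monotonicity argument and part (b) by an $\e$-regularity and iteration scheme, both adapted to the almost-minimality condition \eqref{david almost-minimal}. For part (a), I would produce the almost-monotonicity formula for the area ratio by a coning competitor. Fix $x \in M$ and set $m(r) = \H^2(M \cap B_{x,r})$ for $0 < r < \rho_0$. I would build a Lipschitz map $f_r$ that radially retracts $M \cap B_{x,r}$ onto the cone with vertex $x$ over $M \cap \pa B_{x,r}$; plugging $f_r$ into \eqref{david almost-minimal} yields $m(r) \le (r/2)\, m'(r) + C\,L\,r^3$ for a.e.\ $r$, which integrates to monotonicity of $r \mapsto m(r)/r^2 + \l\, r$ once $\l$ is chosen in terms of $L$ and $\rho_0$. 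In particular $\theta_M(x) = \lim_{r \to 0^+} m(r)/r^2$ exists, and the existence of $r_x$ and a minimal cone $X'$ satisfying \eqref{david hp1} follows by a blow-down compactness argument: any subsequential local Hausdorff limit of $\{(M-x)/r\}_r$ is an $(\MM,0,\infty)$-minimal set (the error $L r^3$ scales as $r$ and vanishes), has density identically $\theta_M(x)$, and hence must be a minimal cone by a Federer-type dimension reduction.

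The heart of part (b) is a tilt-excess decay, iterated geometrically. The classification of two-dimensional minimal cones in $\R^3$ as planes, rotated copies of $Y$, or rotated copies of $T$ gives three distinct density values $\pi < \theta_Y(0) < \theta_T(0)$; combined with $\theta_{X'}(0) \le \theta_M(x)$ and \eqref{david hp1}, this forces $\theta_{X'}(0) = \theta_M(x)$ as soon as $\e_0$ is small. I would then prove the following decay step by compactness and contradiction: there exist $\theta \in (0,1)$ and $C_1$ such that, whenever $(M-x)/r$ is $\delta$-close in $\hd_{0,1}$ to a minimal cone $X_0$ of density $\theta_M(x)$, then $(M-x)/(\theta r)$ is $C_1\theta^\a \delta$-close to some (possibly different) minimal cone $X_1$ of the same density. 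Iterating yields a Cauchy sequence of rotations of a fixed reference cone whose limit is the cone $X$ claimed in the statement, and summing the geometric series gives the $\hd_{x,r}$ decay with exponent $\a$.

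For the parameterization $\Phi$, I would exploit both the Hausdorff proximity of $M \cap B_{x,r}$ to $x+X$ and the preservation of the singular strata (which is forced by continuity of densities under Hausdorff limits of almost-minimal sets) to represent $M$ as $C^{1,\a}$ graphs over the flat sheets of $X$, matched along $C^{1,\a}$ curves above $\s_Y(X)$ and, if $X$ is of $T$-type, at a single vertex above $\s_T(X)$. A Whitney-type interpolation then glues these local graph representations into a single $C^{1,\a}$-diffeomorphism $\Phi$ satisfying \eqref{stima david gradienti 2}, with the uniform $C^{1,\a}$ bound arising from the geometric decay rate $\a$.

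The main obstacle is the decay step itself when $X_0$ is of $Y$- or $T$-type, since no global graph structure is available; the compactness-contradiction scheme must couple closeness of $M$ to $X_0$ in Hausdorff distance with closeness of $\s_Y(M)$ and $\s_T(M)$ to the corresponding strata of $X_0$, and crucially exploits that the moduli space of minimal cones of each fixed type (i.e.\ $\SO(3)$ quotiented by the isotropy group of the reference $Y$ or $T$) is a smooth compact manifold on which $\hd_{0,1}$ is bi-Lipschitz equivalent to the natural Riemannian distance. This is the point at which David's analysis in \cite{David1,David2} is most delicate, and is the reason one obtains a positive, but otherwise non-explicit, H\"older exponent $\a$.
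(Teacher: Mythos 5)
There is a genuine gap, and it is worth noting first that the paper does not prove this statement from scratch at all: it is quoted as Theorem~\ref{thm david} precisely because it is imported from David's work, the proof consisting of the observation that \eqref{david almost-minimal} places $M$ in the admissible class of \cite[Definition 1.10, Eq.~(1.13)]{David2}, with part (a) taken from \cite[Eq.~(3.13), Proposition 3.14]{David2} and part (b) from \cite[Theorem 12.8, Corollary 12.25]{David2}. Your outline is instead a roadmap of the Taylor--David theory itself, and while the monotonicity argument for part (a) (cone competitor, differential inequality $m(r)\le (r/2)m'(r)+CLr^3$, blow-down compactness plus the classification of two-dimensional minimal cones in $\R^3$) is the correct skeleton, the central step of part (b) is not obtained the way you describe. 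A bare compactness-and-contradiction argument cannot produce the quantitative decay ``$\de$-close at scale $r$ implies $C_1\theta^\a\de$-close at scale $\theta r$'': compactness only yields statements of the form ``for every $\e$ there is $\de$'', with no linear dependence on $\de$ and no fixed geometric gain. To convert qualitative closeness into a decay rate one needs a quantitative mechanism at the level of the limiting cone --- Taylor's epiperimetric inequalities for the $Y$ and $T$ cones \cite{taylor76}, or David's competitor constructions and approximation scheme, which occupy the bulk of \cite{David2} --- and this linearized/epiperimetric analysis on the cross-sections of $Y$ and $T$ is exactly the missing idea; acknowledging that this is ``the delicate point'' does not supply it.

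Two further steps are asserted without justification. First, the claim that \eqref{david hp1} together with $\theta_{X'}(0)\le\theta_M(x)$ forces $\theta_{X'}(0)=\theta_M(x)$ for small $\e_0$ is not immediate: Hausdorff closeness of $M$ to a lower-density cone does not by itself exclude hidden multiplicity (two nearly coincident sheets are Hausdorff-close to a single plane), so one must use the almost-minimality \eqref{david almost-minimal} through a projection/competitor argument to relate Hausdorff flatness to the area ratio; indeed part of the content of (b) is precisely that a cone of the \emph{correct} density $\theta_M(x)$ exists within $C_0\e$ of $X'$. Second, the stratified parameterization $\Phi$ of \eqref{stima david gradienti 2}, with uniform $C^{1,\a}$ control of $\Phi$ and $\Phi^{-1}$ and with $\s_Y(M)$ matched to $\s_Y(X)$ at \emph{every} scale $r\le r_x/C_0$, is not a routine Whitney-type gluing of graphs: it is itself the main output of \cite[Theorem 12.8, Corollary 12.25]{David2} (a Reifenberg-type construction with H\"older estimates), and your sketch presupposes the very stratum-preservation and uniformity that have to be proved there. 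In short, the proposal correctly identifies the architecture of the argument but leaves the three load-bearing components --- the epiperimetric/decay estimate at singular cones, the density-matching step, and the uniform stratified parameterization --- at the level of statements to be proved, which is why the paper cites \cite{David1,David2} rather than reproving them.
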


\begin{proof}
  As explained in \cite[Definition 1.10, Equation (1.13)]{David2} the results from \cite{David2} apply to sets satisfying the almost-minimality condition \eqref{david almost-minimal}. Assertion (a) then follows from \cite[Equation (3.13), Proposition 3.14]{David2}, and assertion (b) is deduced by \cite[Theorem 12.8, Corollary 12.25]{David2}.
\end{proof}

\subsection{$(\Lambda,r_0)$-minimizing clusters as $(\MM,\xi,\de)$-minimal sets}\label{section david plug} The theory of section \ref{section taylor} can be applied to the boundaries of $(\Lambda,r_0)$-minimizing clusters.

\begin{theorem}\label{thm david plug}
  If $\E$ is a perimeter $(\Lambda,r_0)$-minimizing $N$-cluster in $\R^n$, then there exists positive constants $ L$ and $\rho_0$ (depending on $\Lambda$, $r_0$, $n$, $N$ and $\max_{1\le h\le N}|\E(h)|$ only) such that
  \begin{equation}
    \label{paE is almostminimal}
      \H^{n-1}(W\cap\pa\E)\le \H^{n-1}(f(W\cap\pa\E))+ L\,r^n\,,
  \end{equation}
  whenever $f:\R^n\to\R^n$ is a Lipschitz map and $\diam(W\cup f(W))=r<\rho_0$, where $W=\{f\ne\Id\}$. In particular, if $n=3$, then $M=\pa\E$ satisfies the assumptions of Theorem \ref{thm david}.
\end{theorem}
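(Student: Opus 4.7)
The plan is to deduce the almost-minimality of $\pa\E$ from the cluster minimality condition \eqref{lambdar0 minimizing cluster}, by applying the latter to a competitor $\F$ built by ``pushing forward'' $\E$ under a bi-Lipschitz regularization of $f$. The error term $L\,r^n$ will come from bounding the volume cost $\Lambda\,\d(\E,\F)\le \Lambda|B|$, where $B$ is a slight enlargement of $W\cup f(W)$. Assuming $W\ne\emptyset$ (otherwise the claim is trivial), I pick $x_0\in W$ and set $B=B(x_0,(1+\tau)r)$ for a fixed $\tau\in(0,1)$, so that taking $\rho_0:=r_0/(1+\tau)$ forces $B$ to have radius below $r_0$. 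Since $W\cup f(W)\cc B$, we have $f\equiv\Id$ on $\pa B$ and in a neighborhood of it in $B$, and we extend $f$ to a global Lipschitz map $\tilde f$ by setting $\tilde f=f$ on $B$ and $\tilde f=\Id$ on $\R^n\setminus B$.

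\textbf{Competitor via bi-Lipschitz approximation.} When $\tilde f$ is a homeomorphism, the natural candidate $\F(h):=\tilde f(\E(h))$ is already a valid $N$-cluster with $\F(h)\Delta\E(h)\cc B$, and the bi-Lipschitz transformation rule for reduced boundaries gives $P(\F;B)=\H^{n-1}(f(W\cap\pa^*\E))+\H^{n-1}(\pa^*\E\cap(B\setminus W))$, from which \eqref{paE is almostminimal} is straightforward. For a merely Lipschitz $f$ (which can collapse subsets of $\pa\E$), I would approximate $\tilde f$ by a sequence $\{f_j\}$ of bi-Lipschitz homeomorphisms of $\R^n$, each identically $\Id$ outside $B$, with $f_j\to\tilde f$ uniformly and with the one-sided control
\[
\limsup_{j\to\infty}\H^{n-1}(f_j(W\cap\pa\E))\le \H^{n-1}(f(W\cap\pa\E)).
\]
The construction of $f_j$ proceeds by mollifying the displacement field $\tilde f-\Id$ (which is Lipschitz and compactly supported in $B$) and then applying a small generic perturbation so that the smooth approximations become diffeomorphisms, using degree theory (maps equal to $\Id$ outside a ball and sufficiently $C^1$-close to $\Id$ are global homeomorphisms). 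The upper-semicontinuity of Hausdorff measures on the image follows by applying the area formula to $f_j$ on the rectifiable set $\pa\E\cap W$ and by a Fatou argument once the tangential $(n-1)$-Jacobians of $f_j$ are pointwise controlled by those of $\tilde f$.

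\textbf{Conclusion and main obstacle.} With $\F_j(h):=f_j(\E(h))$ a valid $N$-cluster satisfying $\F_j(h)\Delta\E(h)\cc B$, applying \eqref{lambdar0 minimizing cluster} gives
\[
P(\E;B)\le P(\F_j;B)+\Lambda\,\d(\E,\F_j),\qquad \d(\E,\F_j)\le\tfrac{N+1}{2}|B|\le c_n\,N\,r^n.
\]
Using the bi-Lipschitz transformation $\pa^*\F_j=f_j(\pa^*\E)$ (modulo $\H^{n-1}$-null sets) and the fact that $f_j=\Id$ outside $B$, we can split
\[
P(\F_j;B)=\H^{n-1}(f_j(W\cap\pa^*\E))+\H^{n-1}(\pa^*\E\cap(B\setminus W)),
\]
while $P(\E;B)=\H^{n-1}(W\cap\pa^*\E)+\H^{n-1}(\pa^*\E\cap(B\setminus W))$. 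Subtracting the common term and using $\H^{n-1}(\pa\E\setminus\pa^*\E)=0$,
\[
\H^{n-1}(W\cap\pa\E)\le \H^{n-1}(f_j(W\cap\pa\E))+c_n\,N\,\Lambda\,r^n.
\]
Passing to $\limsup_{j\to\infty}$ and invoking the semicontinuity property then yields \eqref{paE is almostminimal} with $L=c_n\,N\,\Lambda$ and $\rho_0$ as above. The main obstacle I foresee is the construction of the bi-Lipschitz approximations $f_j$ with both the compact-support condition $f_j=\Id$ outside $B$ preserved and the one-sided convergence of $\H^{n-1}(f_j(W\cap\pa\E))$ to $\H^{n-1}(f(W\cap\pa\E))$; this is a technical but standard ingredient from the regularization theory for Lipschitz deformations, and is of the same nature as the tools used to identify boundaries of minimizing partitions with Almgren's $(\MM,\xi,\de)$-minimal sets.
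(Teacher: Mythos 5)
Your outline of the easy part is fine (for a bi-Lipschitz $f$ the paper itself observes that taking $\F=f(\E)$ immediately gives \eqref{paE is almostminimal}), but the reduction of the general Lipschitz case to the bi-Lipschitz case is a genuine gap, and it is not a standard technicality: it is the whole content of the theorem. The maps against which $(\MM,\xi,\de)$-minimality must be tested are genuinely non-injective deformations (folds, collapses of two-dimensional pieces of $\pa\E$ onto curves or points), and such maps cannot in general be approximated uniformly by homeomorphisms of $\R^n$: a uniform limit of homeomorphisms is a near-homeomorphism, hence has cell-like (in particular connected) point preimages, while a fold map equal to the identity outside a ball has points with disconnected preimages. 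Your concrete construction also does not work as described: mollifying the displacement $\tilde f-\Id$ gives a smooth map which is $C^0$-close to $\tilde f$, not $C^1$-close to $\Id$, so the degree-theoretic criterion you invoke does not apply, and no ``small generic perturbation'' removes a stable fold or a collapse. Finally, even granting homeomorphisms $f_j\to f$ uniformly, the one-sided bound $\limsup_j\H^{n-1}(f_j(W\cap\pa\E))\le\H^{n-1}(f(W\cap\pa\E))$ is precisely what has to be produced, not assumed: when $f$ collapses a portion of $\pa\E$, the right-hand side is much smaller than $\H^{n-1}(W\cap\pa\E)$, and mollification gives no pointwise domination of the tangential Jacobians of $f_j$ on the rectifiable set $W\cap\pa\E$ by those of $f$.

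The paper avoids this impasse by never making $f$ injective. It pushes forward the currents $T_h=\EE^n\llcorner\E(h)$ by the given (possibly non-injective) Lipschitz map, uses $f_\#\EE^n=\EE^n$, and builds the competitor chambers from the mod-2 carriers $F_h=\car f_\#T_h$; Proposition \ref{lemma F} supplies exactly the two facts your approximation scheme was meant to deliver, namely $\H^{n-1}\llcorner\pa^*F_h\le\H^{n-1}\llcorner f(\pa^*\E(h))$ and $|\E(h)\Delta F_h|=\MM\big((T_h-f_\#T_h)^*\big)$, the latter being controlled by $|W\cup f(W)|\le C(n)\,r^n$. After disjointifying the $F_h$ into a cluster $\F$ with $\E(h)\Delta\F(h)\cc B_{x_0,3r}$, the $(\Lambda,r_0)$-minimality of $\E$ is applied exactly as in your final step. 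So your conclusion scheme is right, but to make the proof work you must replace the bi-Lipschitz approximation by a construction of the competitor cluster that accepts non-injective maps, e.g.\ the mod-2 current (or BV/degree-function) argument of Proposition \ref{lemma F}.
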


The proof of Theorem \ref{thm david plug} is discussed in Appendix \ref{appendix david plug}.

\subsection{Proof of Theorem \ref{thm structure in R3}}\label{section proof of structure in R3} {\it Step one}: Let $\e_0$, $\a$ and $C_0$ be as in Theorem \ref{thm david}, and let $\E$ be a $(\Lambda,r_0)$-minimizing cluster in $\R^3$. By Theorem \ref{thm david plug} we can apply Theorem \ref{thm david} to $M=\pa\E$. In particular, by \eqref{taylor structure 1} and \eqref{taylor structure 2}, $\theta_{\pa\E}(x)$ is defined for every $x\in\pa\E$, and thus we get
\[
\pa\E=\{\theta_{\pa\E}=\pi\}\cup \{\theta_{\pa\E}=\theta_Y(0)\}\cup \{\theta_{\pa\E}=\theta_T(0)\}\,.
\]
We set $\S_Y(\E)=\{\theta_{\pa\E}=\theta_Y(0)\}=\s_Y(\pa\E)$ and $\S_T(\E)=\{\theta_{\pa\E}=\theta_T(0)\}=\s_T(\pa\E)$. Again by Theorem \ref{thm david}, for every $x\in\pa\E$ there exist a minimal cone $X_x$ in $\R^3$ and $r_x>0$ such that $\theta_{\pa\E}(x)=\theta_{X_x}(0)$,
\begin{equation}
  \label{proof thm 1.1 1}
  \frac{\hd_{x,r}(\pa\E,x+X_x)}r\le C_0\,\Big(\frac{r}{r_x}\Big)^{\a}\,,\qquad\forall r<\frac{r_x}{C_0}\,,
\end{equation}
and there exists a $C^{1,\a}$-diffeomorphism $\Phi_x$ between $U_x=B_{0,2s_x}$ ($s_x=r_x/2C_0$) and $A_x=\Phi_x(B_{0,2s_x})$ such that $\Phi_x(0)=x$, $B_{x,s_x}\subset A_x$ and
\begin{equation}
  \label{proof thm 1.1 2}
  \begin{split}
  \Phi_x(X_x\cap U_x)\cap B_{x,s_x}=\pa\E\cap B_{x,s_x}\,,
  \\
  \Phi_x(\s_Y(X_x)\cap U_x)\cap B_{x,s_x}=\S_Y(\E)\cap B_{x,s_x}\,,
  \\
  \|\Phi_x\|_{C^{1,\a}(U_x)}+\|\Phi_x^{-1}\|_{C^{1,\a}(A_x)}\le C_0\,.
  \end{split}
\end{equation}
We claim that $\pa^*\E=\{\theta_{\pa\E}=\pi\}$. Indeed, $\theta_{\pa\E}=\pi$ on $\pa^*\E$ by De Giorgi's structure theorem for sets of finite perimeter. At the same time, if $\theta_{\pa\E}(x)=\pi$ for some $x\in\pa\E$, then $X_x$ is a plane and thus, by \eqref{proof thm 1.1 2}, $B_{x,s_x}\setminus\pa\E$ has two distinct connected components. Hence there exists $0\le h<k\le N$ such that $B_{x,s_x}\cap\E(j)\ne\emptyset$ if and only if $j=h,k$, so that $\E(h)$ is an open set with boundary of class $C^{1,\a}$ in $B_{x,s_x}$. In particular, $B_{x,s_x}\cap\pa\E(h)=B_{x,s_x}\cap\pa^*\E(h)$ and thus $x\in\pa^*\E$. We have thus proved
\begin{equation}
  \label{proof thm 1.1 3}
  \pa^*\E=\{\theta_{\pa\E}=\pi\}\,,\qquad\mbox{and so}\qquad\S(\E)=\S_Y(\E)\cup\S_T(\E)\,,
\end{equation}
that is, \eqref{structure 1} holds.

\medskip

\noindent {\it Step two}: Let now $x\in\S_T(\E)$. By $\s_T(X)=\{0\}$, $\Phi_x(0)=x$, $\Phi_x(\s_Y(X)\cap U_x)\cap B_{x,s_x}=\S_Y(\E)\cap B_{x,s_x}$ and \eqref{proof thm 1.1 3} we conclude that $\S_T(\E)\cap B_{x,s_x}=\{x\}$. In particular, $\S_T(\E)$ is locally finite. By an analogous argument we check that $\S_Y(\E)$ is a $C^{1,\a}$-curve in $\R^3$, relatively open in $\S(\E)$, while (as we already know even when $n\ge4$) $\pa^*\E$ is a $C^{1,1/2}$-surface in $\R^3$, relatively open in $\pa\E$. Let $\{M_i\}_{i\in I}$ and $\{\s_j\}_{j\in J}$ denote the connected components of $\pa^*\E$ and $\S_Y(\E)$ respectively, so that
\begin{equation}
  \label{proof thm 1.1 unione Mi unione sj}
  \pa^*\E=\bigcup_{i\in I}M_i\,,\qquad\S_Y(\E)=\bigcup_{j\in J}\s_j\,.
\end{equation}
By \eqref{proof thm 1.1 2}, $\{M_i\}_{i\in I}$ and $\{\s_j\}_{j\in J}$ are locally finite, and each $M_i$ is a connected  $C^{1,\b}$-surface in $\R^3$ for every $\b\in(0,1)$, while each $\s_j$ is a connected $C^{1,\a}$-curve in $\R^3$. In the following steps we check that \eqref{structure 2} and \eqref{structure 3} hold with
\[
\Ss(\E)=\{S_i=\cl(M_i)\}_{i\in I}\,,\qquad\Gamma(\E)=\{\g_j=\cl(\s_j)\}_{j\in J}\,.
\]

\medskip

\noindent {\it Step three}: We first check that for each $j\in J$ there exist $0\le k_1^j<k_2^j<k_3^j\le N$ such that $\s_j\cap\pa\E(h)\ne\emptyset$ if and only if $h\in\{k_1^j,k_2^j,k_3^j\}$. This follows immediately by \eqref{proof thm 1.1 2}, by the connectedness of $\s_j$ and by means of a covering argument.

\medskip

\noindent {\it Step four}: We prove that \eqref{structure 3} holds with $\g_j=\cl(\s_j)$. We first check that $\g_j$ is a connected $C^{1,\a}$-curve with boundary in $\R^3$. This is trivial if $\s_j=\g_j$, so let $\g_j\setminus\s_j\ne\emptyset$. Since $\s_j\subset\S_Y(\E)\subset\S(\E)$ and $\S(\E)$ is closed we have $\g_j\setminus\s_j\subset\S(\E)$. At the same time, by \eqref{proof thm 1.1 2} and by connectedness of $\s_j$, we have $\S_Y(\E)\cap\g_j=\S_Y(\E)\cap\s_j$, so that $\g_j\setminus\s_j\subset\S_T(\E)$. Let $x\in\g_j\setminus\s_j$, then by \eqref{proof thm 1.1 2} and by $x\in\S_T(\E)$, $\S_Y(\E)\cap B_{x,s_x}$ consists of four distinct $C^{1,\a}$-diffeomorphic images $\rho_1$, $\rho_2$, $\rho_3$ and $\rho_4$ of $(0,1)$. Without loss of generality we may assume that $\rho_1\subset\s_j\cap  B_{x,s_x}$. By showing that $\rho_1=\s_j\cap  B_{x,s_x}$ and by invoking again \eqref{proof thm 1.1 2} we see that $\g_j$ is $C^{1,\a}$-diffeomorphic to $[0,1)$ in a neighborhood of $x$, as required. To this end, it is enough to check that $\rho_m\cap\s_j\cap  B_{x,s_x}=\emptyset$ for $m=2,3,4$. Indeed, by \eqref{proof thm 1.1 2}, for each $m=1,2,3,4$ there exist $0\le h^m_1<h^m_2<h^m_3\le N$ such that
\begin{equation}
  \label{proof thm 1.1 4}
  \rho_m=\S_Y(\E)\cap B_{x,s_x}\cap \pa\E(h^m_1)\cap \pa\E(h^m_2)\cap \pa\E(h^m_3)\,,
\end{equation}
and such that for every $1\le m<m'\le 4$ it holds
\begin{equation}
  \label{proof thm 1.1 5}
\#\,\Big(\{h^m_1,h^m_2,h^m_3\}\cap \{h^{m'}_1,h^{m'}_2,h^{m'}_3\}\Big)=2\,.
\end{equation}
By step three, \eqref{proof thm 1.1 4} and $\rho_1\subset\s_j\cap  B_{x,s_x}$ it must be
\[
\{k_1^j,k_2^j,k_3^j\}=\{h^1_1,h^1_2,h^1_3\}\,.
\]
Hence, if $\rho_m\subset\s_j\cap  B_{x,s_x}$ for some $m=2,3,4$, then
\[
\{k_1^j,k_2^j,k_3^j\}=\{h_1^m,h_2^m,h_3^m\}\,,
\]
thus leading to a contradiction with \eqref{proof thm 1.1 5}. This proves that $\g_j$ is a connected $C^{1,\a}$-curve with boundary in $\R^3$ with
\[
\INT(\g_j)=\s_j\subset\S_Y(\E)\,,\qquad \bd(\g_j)\subset\S_T(\E)\,.
\]
By \eqref{proof thm 1.1 unione Mi unione sj} we find
\[
\S_Y(\E)=\bigcup_{j\in J}\INT(\g_j)\,,\qquad \bigcup_{j\in J}\bd(\g_j)\subset\S_T(\E)\,.
\]
Finally, if $x\in\S_T(\E)$, then, by \eqref{proof thm 1.1 2}, $x\in\cl(\S_Y(\E))$, and thus $x\in\g_j=\cl(\s_j)$ for some $j\in J$, and \eqref{structure 3} holds.

\medskip

\noindent {\it Step five}: We prove \eqref{structure 2}. By \eqref{proof thm 1.1 unione Mi unione sj} and $\cl(\pa^*\E)=\pa\E$ we see that $\pa\E=\bigcup_{i\in I}S_i$. We now claim that $S_i^*=S_i\setminus\S_T(\E)$ is a $C^{1,\a}$-surface with boundary in $\R^3$ with
\begin{equation}
  \label{proof thm 1.1 Sistar}
  \INT(S_i^*)=M_i\,,\qquad \bd(S_i^*)\subset\S_Y(\E)\,.
\end{equation}
Since $S_i^*\cap M_i=M_i$ we have that $S_i^*$ is locally $C^{1,\b}$-diffeomorphic to a disk at every $x\in S_i^*\cap M_i$ for every $\b\in(0,1)$. If $x\in S_i^*\setminus M_i$, then $x\in\S_Y(\E)$. By \eqref{proof thm 1.1 2} and by arguing as in step three and step four one checks that $S_i^*$ is locally $C^{1,\a}$-diffeomorphic to a half-disk at every $x\in S_i^*\setminus M_i$. This proves \eqref{proof thm 1.1 Sistar}, thus \eqref{structure 2} up to the inclusion $\S_Y(\E)\subset\bigcup_{i\in I}\bd(S_i^*)$, which follows from \eqref{proof thm 1.1 2} and the fact that $\pa^*\E=\bigcup_{i\in I}M_i$. The fact that $S_i$ is a connected topological surface with boundary similarly follows from \eqref{proof thm 1.1 2}. Finally \eqref{structure 4} follows by \eqref{structure 1} and \eqref{proof thm 1.1 1}.

\section{Hausdorff convergence of singular sets and tangent cones}\label{section improved convergence part one} The goal of this section is showing the convergence of singular sets and tangent cones for clusters in $\R^3$. Precisely, given  $\Lambda\ge0$ and $r_0>0$ we assume that
\begin{equation}
  \begin{split}
    \label{hpstar}
    &\mbox{$\E$ is a $(\Lambda,r_0)$-minimizing cluster in $\R^3$ with $\pa^*\E$ of class $C^{2,1}$}\,,
    \\
    &\mbox{$\{\E_k\}_{k\in\N}$ is a sequence of $(\Lambda,r_0)$-minimizing clusters in $\R^3$}\,,
    \\
    &\mbox{$\d_{B_R}(\E_k,\E)\to 0$ as $k\to\infty$ for every $R>0$}\,.
  \end{split}
\end{equation}
 Our starting point is the following result from \cite{CiLeMaIC1} (which holds {\it verbatim} for arbitrary $n$). Here and in the following, in analogy to \eqref{S rho} but with a slight abuse of notation, we set
\[
[\pa\E]_\rho=\pa^*\E\setminus I_\rho(\S(\E))\,,\qquad\forall \rho>0\,.
\]

\begin{theorem}\label{thm from IC1}
  If \eqref{hpstar} holds, then $\E$ is a $(\Lambda,r_0)$-minimizing cluster in $\R^3$, $\H^2\llcorner\pa^*\E_k\weak\H^2\llcorner\pa\E$ as $k\to\infty$ as Radon measures, and there exist positive constants $\rho_0$ (depending on $\E$) and $C$ (depending on $\Lambda$ and $\E$) such that:
  \begin{enumerate}
    \item[(i)] for every $R>0$ one has $\hd_{B_R}(\pa\E_k,\pa\E)\to 0$ as $k\to\infty$, and, actually,
  \begin{equation}
    \label{boundary haudorff interfaces}
    \lim_{k\to\infty}\hd_{B_R}\Big(\pa\E_k(i)\cap\pa\E_k(j),\pa\E(i)\cap\pa\E(j)\Big)=0\,,\qquad\forall 0\le i<j\le N\,;
  \end{equation}
  \item[(ii)] for every $R>R'>0$ and $\e>0$ there exist $k_0\in\N$ such that
  \begin{eqnarray}
    \label{inclusions bordi}
      \S(\E_k)\cap B_{R'}\subset I_\e(\S(\E)\cap B_R)\,,\qquad\forall k\ge k_0\,;
  \end{eqnarray}
  \item[(iii)] for every $R>R'>0$ and $\rho<\rho_0$ there exist $k_0\in\N$, $\e\in(0,\rho)$, $R''\in(R',R)$ and $\{\psi_k\}_{k\ge k_0}\subset C^{1,\b}([\pa\E]_\rho)$ for every $\b\in(0,1)$ such that
  \begin{equation}
    \label{zetak parametrizzano}
      (B_{R'}\cap\pa\E_k)\setminus I_{2\rho}(\S(\E)\cap B_R)\subset (\Id+\psi_k\nu_\E)([\pa\E]_\rho\cap B_{R''}) \subset B_R\cap \pa^*\E_k\,,
  \end{equation}
  \begin{equation}
    \label{zetak normale para}
      N_{\e}(B_{R'}\cap [\pa\E]_{\rho})\cap\pa\E_k=(\Id+ \psi_k\,\nu_\E)(B_{R'}\cap [\pa\E]_\rho)\,,
  \end{equation}
  for every $k\ge k_0$, with
  \begin{equation}
  \begin{split}
      &\lim_{k\to\infty}\|\psi_k\|_{C^1(B_{R''}\cap [\pa\E]_\rho)}=0\,,
       \\
      &\sup_{k\ge k_0}\|\psi_k\|_{C^{1,\b}(B_{R''}\cap [\pa\E]_\rho)}\le C(\b,\Lambda,\E,R',R)\,\quad\forall \b\in(0,1)\,.
  \end{split}
    \label{zetak tendono a zero}
  \end{equation}
  \end{enumerate}
\end{theorem}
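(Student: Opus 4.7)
The plan is to combine four standard ingredients: closure of the class of $(\Lambda,r_0)$-minimizing clusters under $L^1$-convergence, the uniform upper and lower density estimates for $\H^2\llcorner\pa^*\E_k$ furnished by Theorem \ref{thm david plug}, the monotonicity-with-correction and local regularity provided by Theorem \ref{thm david}, and an Ascoli--Arzel\`a argument for the graph parametrizations.

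\textbf{Closure and bulk convergence.} Given a competitor $\F$ of $\E$ with $\E(h)\Delta\F(h)\cc B_{x,r}$, graft it onto each $\E_k$ by $\F_k(h)=(\E_k(h)\setminus B_{x,r})\cup(\F(h)\cap B_{x,r})$, apply \eqref{lambdar0 minimizing cluster} to the pair $(\E_k,\F_k)$, and pass to the limit via $L^1$-continuity of $\d$ and lower semicontinuity of $P(\cdot;B_{x,r})$; this shows $\E$ is $(\Lambda,r_0)$-minimizing. By Theorem \ref{thm david plug}, the family $\{\pa\E_k\}\cup\{\pa\E\}$ satisfies \eqref{paE is almostminimal} with constants uniform in $k$, so the standard monotonicity formula yields constants $c,C,\rho_0>0$ depending only on $\Lambda$ and $\E$ with $c r^2\le \H^2(B_{y,r}\cap\pa\E_k)\le C r^2$ for every $y\in\pa\E_k$ and $r<\rho_0$. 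These density bounds, combined with $\d_{B_R}(\E_k,\E)\to 0$, yield $\hd_{B_R}(\pa\E_k,\pa\E)\to 0$: a point of $\pa\E$ cannot sit at positive distance from $\pa\E_k$ (otherwise the lower density estimate on $\pa\E_k$ would force a chamber to fill a small ball around it, contradicting $L^1$-convergence) and conversely. The same bi-chamber density argument applied to each pair $(\E_k(i),\E_k(j))$ produces the interface convergence \eqref{boundary haudorff interfaces}, since both adjacent chambers have uniform lower density on their common trace. The weak-$*$ convergence $\H^2\llcorner\pa^*\E_k\weak\H^2\llcorner\pa\E$ then follows from lower semicontinuity of perimeter under $L^1$-convergence, the matching upper bound obtained by comparing $\E_k$ to $\E$ through \eqref{lambdar0 minimizing cluster} on balls, and Hausdorff convergence of supports.

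\textbf{Proof of (ii).} Argue by contradiction. Suppose along a subsequence $x_k\in\S(\E_k)\cap B_{R'}$ with $\dist(x_k,\S(\E)\cap B_R)\ge\e$, and extract $x_k\to x$. By the Hausdorff convergence of boundaries $x\in\pa\E$, and by construction $x\notin\S(\E)$, so $x\in\pa^*\E$ and $\theta_{\pa\E}(x)=\pi$. Theorem \ref{thm david}(a) applied to $\pa\E_k$ (with $\lambda$ uniform in $k$) gives the monotonicity of $r\mapsto\H^2(B_{x_k,r}\cap\pa\E_k)/r^2+\lambda r$, so for every small $r>0$,
\[
\theta_Y(0)\le \theta_{\pa\E_k}(x_k)\le \frac{\H^2(B_{x_k,r}\cap\pa\E_k)}{r^2}+\lambda r.
\]
Letting $k\to\infty$ (using the weak-$*$ convergence $\H^2\llcorner\pa^*\E_k\weak\H^2\llcorner\pa\E$ together with the fact that $\pa B_{x,r}\cap\pa\E$ is $\H^2$-negligible for a.e.\ $r$) and then $r\to 0^+$ one obtains $\theta_Y(0)\le\pi$, a contradiction.

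\textbf{Proof of (iii).} The $C^{2,1}$-hypothesis on $\pa^*\E$ yields a uniform bound on the second fundamental form over $[\pa\E]_\rho$, hence a genuine normal tubular neighborhood $N_\e([\pa\E]_\rho)$ with unique nearest-point projection for some $\e=\e(\rho)>0$. Fix $\rho<\rho_0$ small and $R''\in(R',R)$; by (ii), for $k$ large the tube $N_\e([\pa\E]_\rho\cap B_{R''})$ contains no point of $\S(\E_k)$, so any $x_k\in\pa\E_k$ inside it is regular. Hausdorff proximity forces the minimal cone selected by \eqref{david hp1} at $x_k$ to be a plane (the only minimal cone close to $T_y\pa\E$ for the nearby $y\in[\pa\E]_\rho$), so Theorem \ref{thm david}(b) furnishes a $C^{1,\a}$-diffeomorphism $\Phi_k$ with $\|\Phi_k\|_{C^{1,\a}}\le C_0$ flattening $\pa\E_k$ near $x_k$. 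Patching these local flattenings through the nearest-point projection defines $\psi_k:[\pa\E]_\rho\cap B_{R''}\to\R$ with
\[
(\Id+\psi_k\nu_\E)([\pa\E]_\rho\cap B_{R''})=\pa\E_k\cap N_\e([\pa\E]_\rho\cap B_{R''}),
\]
which yields \eqref{zetak parametrizzano} and \eqref{zetak normale para}. The uniform $C^{1,\b}$-bound in \eqref{zetak tendono a zero} is inherited from \eqref{stima david gradienti 2}, and $C^1$-convergence $\psi_k\to 0$ follows from Hausdorff convergence (giving $C^0$-convergence) combined with equicontinuity of $\nabla^{\pa\E}\psi_k$ from the $C^{1,\b}$-bound and Ascoli--Arzel\`a. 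The main technical obstacle is calibrating the three scales $\rho$, $R''-R'$ and $\e(\rho)$ so that both inclusions in \eqref{zetak parametrizzano} hold simultaneously; this requires using Hausdorff convergence to prevent any piece of $\pa\E_k$ near $[\pa\E]_\rho$ from escaping through the lateral boundary of the tube before the graph representation covers it.
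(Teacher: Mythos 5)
The paper does not actually reprove this statement: its proof is a one-line citation of \cite[Theorem 4.9 and Theorem 4.12]{CiLeMaIC1}, which hold in every dimension $n$, so what you have written is in effect a reconstruction of those Part~I results. Your closure argument, the density/Hausdorff convergence of $\pa\E_k$, the weak-$*$ convergence of $\H^2\llcorner\pa^*\E_k$, and the contradiction argument for (ii) via the monotonicity of Theorem \ref{thm david}(a) are all sound in outline. But two steps have genuine gaps. For \eqref{boundary haudorff interfaces}, the ``bi-chamber density argument'' only gives one direction: if $x_k\in\pa\E_k(i)\cap\pa\E_k(j)$ stays at distance $\ge\e$ from $\pa\E(i)\cap\pa\E(j)$, the two lower volume densities pass to the limit point $x$ and force $x\in\cl(\E(i))\cap\cl(\E(j))=\pa\E(i)\cap\pa\E(j)$, a contradiction. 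In the converse direction, at $x\in\pa\E(i)\cap\pa\E(j)$ the density bounds only yield $|\E_k(i)\cap B_{x,r}|,|\E_k(j)\cap B_{x,r}|\ge c\,r^3$, and this does not produce a point of $\pa\E_k(i)\cap\pa\E_k(j)$ near $x$: nothing so far excludes a thin sliver of a third chamber $\E_k(h)$ separating $\E_k(i)$ from $\E_k(j)$ inside $B_{x,r}$. One needs an infiltration/elimination lemma with constants uniform in $k$ (the role of \cite[Lemma 4.5]{CiLeMaIC1}), applied after reducing to points of $\pa^*\E(i)\cap\pa^*\E(j)$, whose closure contains $\pa\E(i)\cap\pa\E(j)$ by Theorem \ref{thm structure in R3}.

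For (iii), the claim that the uniform bound in \eqref{zetak tendono a zero} ``is inherited from \eqref{stima david gradienti 2}'' is incorrect as stated: David's estimate bounds $\psi_k$ only in $C^{1,\a}$ for the single (small) exponent $\a$ of Theorem \ref{thm david}, whereas \eqref{zetak tendono a zero} asserts a uniform $C^{1,\b}$ bound for \emph{every} $\b\in(0,1)$. That extra regularity does not come from the $(\MM,\xi,\de)$-theory at all; it comes from the classical regularity theory for $(\Lambda,r_0)$-minimizing clusters (the interfaces have bounded distributional mean curvature, so once they are $C^{1,\gamma}$ graphs, elliptic estimates give $W^{2,p}$ for all $p$, hence $C^{1,\b}$ bounds for all $\b<1$), which is precisely what the cited \cite[Theorem 4.12]{CiLeMaIC1} provides, and in arbitrary dimension rather than only $n=3$. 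Relatedly, the exact equality \eqref{zetak normale para} (no extra sheets of $\pa\E_k$ inside the normal tube) deserves an explicit argument -- the local single-sheetedness in \eqref{stima david gradienti 2} together with a covering of $[\pa\E]_\rho\cap B_{R''}$ does close it -- but this is a smaller omission than the two points above.
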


\begin{proof}
  This follows from \cite[Theorem 4.9 and Theorem 4.12]{CiLeMaIC1}.
\end{proof}

We are now ready to prove the main result of this section. The constants $\a$, $\e_0$ and $C_0$ will be the ones introduced in Theorem \ref{thm david}.

\begin{theorem}\label{thm convergence singular sets}
  If \eqref{hpstar} holds, then
  \begin{equation}
    \label{convergence singular sets}
      \begin{split}
        &\lim_{k\to\infty}\hd_{B_R}(\S(\E_k),\S(\E))=\lim_{k\to\infty}\hd_{B_R}(\S_Y(\E_k),\S_Y(\E))=0\,,
        \\
        &\lim_{k\to\infty}\hd_{B_R}(\S_T(\E_k),\S_T(\E))=0\,,
      \end{split}
  \end{equation}
  for every $R>0$. Moreover, if $x\in\S(\E)$, $x_k\in\S(\E_k)$, $x_k\to x$ as $k\to\infty$, and $\theta_{\pa\E_k}(x_k)=\theta_{\pa\E}(x)$ for every $k\in\N$, then
  \begin{equation}
    \label{convergence tangent cones}
      \lim_{k\to\infty}\hd_{0,1}(T_x\pa\E,T_{x_k}\pa\E_k)=0\,,
  \end{equation}
 and there exists $s_x>0$, and for every $r<s_x$ there exist $k_{x,r}\in\N$ and $C^{1,\a}$-diffeomorphisms $\Phi_r$ and $\Phi_{k,r}$ defined on $B_{0,2r}$ such that
  \begin{equation}\label{covering 2}
    \begin{split}
      &\mbox{$\Phi_r(0)=x$, $B_{x,r}\subset \Phi_r(B_{0,2r})$, and $B_{0,r/C_0}\subset(\Phi_r)^{-1}(B_{x,r})$}\,,
      \\
      &\Phi_r(B_{0,2r}\cap T_x\pa\E)\cap B_{x,r} =B_{x,r}\cap\pa\E\,,
      \\
      &\Phi_r(B_{0,2r}\cap \s_Y(T_x\pa\E))\cap B_{x,r}=B_{x,r}\cap\S_Y(\E)\,,
      \\
      &\|\Phi_r\|_{C^{1,\a}(B_{0,2r})}+\|\Phi_r^{-1}\|_{C^{1,\a}(\Phi_r(B_{0,2r}))}\le C_0\,;
  \end{split}
  \end{equation}
  \begin{equation}\label{covering 3}
    \begin{split}
      &\mbox{$\Phi_{k,r}(0)=x$, $B_{x_k,r}\subset \Phi_{k,r}(B_{0,2r})$, and $B_{0,r/C_0}\subset(\Phi_{k,r})^{-1}(B_{x_k,r})$}\,,
      \\
      &\Phi_{k,r}(B_{0,2r}\cap T_{x_k}\pa\E_k)\cap B_{x_k,r} =B_{x_k,r}\cap\pa\E_k\,,
      \\
      &\Phi_{k,r}(B_{0,2r}\cap \s_Y(T_{x_k}\pa\E_k))\cap B_{x_k,r}=B_{x_k,r}\cap\S_Y(\E_k)\,,
      \\
      &\|\Phi_{k,r}\|_{C^{1,\a}(B_{0,2r})}+\|\Phi_{k,r}^{-1}\|_{C^{1,\a}(\Phi_{k,r}(B_{0,2r}))}\le C_0\,.
  \end{split}
  \end{equation}
\end{theorem}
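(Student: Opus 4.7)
The proof splits into three parts: (A) Hausdorff convergence of the full singular set, (B) refinement to the $\S_Y$ and $\S_T$ strata, and (C) tangent cone convergence and the construction of $\Phi_r$ and $\Phi_{k,r}$. For (A), the inclusion $\S(\E_k)\cap B_{R'}\subset I_\e(\S(\E))$ is supplied by Theorem \ref{thm from IC1}(ii). For the reverse, I argue by contradiction: if $x\in\S(\E)\cap B_{R'}$ satisfied $\dist(x,\S(\E_k))\ge\e$ along a subsequence, every point of $\pa\E_k\cap B_{x,\e/2}$ would be regular, so Theorem \ref{thm david}(b) applied pointwise with $X'$ a plane yields uniform $C^{1,\a}$-graph representations of $\pa\E_k$ on balls of controlled radius around each such point. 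Arzel\`a--Ascoli and the Hausdorff convergence in Theorem \ref{thm from IC1}(i) would then make $\pa\E\cap B_{x,\e/4}$ a $C^{1,\a}$-surface, contradicting $x\in\S(\E)$.

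\textbf{Part (B).} The key tool is upper semicontinuity of density: the monotonicity in Theorem \ref{thm david}(a) combined with $\H^2\res\pa\E_k\weak\H^2\res\pa\E$ (Theorem \ref{thm from IC1}) gives $\limsup_k\theta_{\pa\E_k}(y_k)\le\theta_{\pa\E}(y)$ whenever $y_k\to y$. Because density takes only the three discrete values $\pi<\theta_Y(0)<\theta_T(0)$, this places $\S_T(\E_k)$-accumulation points in $\S_T(\E)$ and $\S_Y(\E_k)$-accumulation points in $\S_Y(\E)\cup\S_T(\E)\subset\cl(\S_Y(\E))$, yielding the easy-direction inclusions. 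For the converse $\S_T(\E)\subset I_\e(\S_T(\E_k))$, given $x\in\S_T(\E)$, Theorem \ref{thm structure in R3} yields 4 distinct chambers $\E(h_1),\ldots,\E(h_4)$ meeting at $x$; the $L^1$-convergence $\E_k\to\E$ and the interface convergence in Theorem \ref{thm from IC1}(i) force the 4 corresponding chambers of $\E_k$ and all 6 of their pairwise interfaces to populate $B_{x,r}$ for every small $r$ and $k$ large. By the classification of minimal cones in $\R^3$ (a $\S_Y$-arc supports exactly 3 chambers and its endpoints lie in $\S_T$), four distinct chambers can only meet at a $T$-point, so some $x_k\in\S_T(\E_k)\cap B_{x,r}$ must exist. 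For $\S_Y(\E)\subset I_\e(\S_Y(\E_k))$, Part (A) gives $y_k\in\S(\E_k)\to y$; if $y_k\in\S_T(\E_k)$ infinitely often, upper semicontinuity would force $y\in\S_T(\E)$, a contradiction, so $y_k\in\S_Y(\E_k)$ eventually and \eqref{convergence singular sets} follows.

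\textbf{Part (C) and main obstacle.} Given $x_k\to x$ with matching densities $\theta=\theta_{\pa\E}(x)=\theta_{\pa\E_k}(x_k)$, set $X:=T_x\pa\E$ and apply Theorem \ref{thm david}(b) to $\pa\E_k$ at $x_k$ with $X'=X$: the hypothesis $\theta_{X'}(0)\le\theta_{\pa\E_k}(x_k)$ holds with equality; the Hausdorff term in $\e$ at scale $r_x/2$ is bounded by $\hd_{x,r_x}(\pa\E,x+X)+\hd_{x_k,r_x}(\pa\E_k,\pa\E)+O(|x_k-x|)$, and the mass term by $\H^2(\pa\E\cap B_{x,r_x})/r_x^2-\theta+o_k(1)$, both arbitrarily small for $r_x$ small (by the structure of $\pa\E$ at $x$) and $k$ large (by Theorem \ref{thm from IC1}). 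Theorem \ref{thm david}(b) then produces a minimal cone $X_k$ with $\theta_{X_k}(0)=\theta$ satisfying $\hd_{0,1}(X_k,X)\le C_0\e\to 0$, which is \eqref{convergence tangent cones} (since $X_k=T_{x_k}\pa\E_k$ by uniqueness of tangent cones), together with the $C^{1,\a}$-diffeomorphism $\Phi_{k,r}$ of \eqref{covering 3}; the map $\Phi_r$ of \eqref{covering 2} is obtained analogously by applying Theorem \ref{thm david}(b) at $x$ to $\pa\E$. The chief obstacle is the chamber-counting step in Part (B): converting the chamber-wise $L^1$-convergence into the \emph{pointwise} statement ``a $T$-point of $\E_k$ lies in $B_{x,r}$'' requires combinatorially tracking how the 4 $\S_Y$-arcs meeting at $x$ in $\pa\E$ deform into arcs in $\S(\E_k)$ that must either terminate at a $T$-point inside $B_{x,r}$ or exit the ball, and excluding the latter by Hausdorff control.
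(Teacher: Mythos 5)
Your Part (C) (tangent cones and the maps $\Phi_r,\Phi_{k,r}$) is essentially the paper's own step: apply Theorem \ref{thm david}(b) at $x_k$ with $X'=T_x\pa\E$, verify the smallness hypothesis \eqref{david hp1} at a fixed small scale using the structure of $\pa\E$ at $x$ together with Hausdorff and perimeter convergence from Theorem \ref{thm from IC1}, and identify the resulting cone with $T_{x_k}\pa\E_k$ via the decay estimate; likewise your upper-semicontinuity-of-density argument for the ``easy'' inclusions is a legitimate variant of what the paper does. The genuine gaps are exactly at the hard inclusions of \eqref{convergence singular sets}. In Part (A), the claim that Theorem \ref{thm david}(b) ``applied pointwise with $X'$ a plane yields uniform $C^{1,\a}$-graph representations of $\pa\E_k$ on balls of controlled radius'' does not follow: hypothesis \eqref{david hp1} requires, at the scale you use, both Hausdorff closeness of $\pa\E_k$ to a plane and a density ratio close to $\pi$, and in your contradiction scenario neither holds at any scale comparable to $\e$. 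If $x\in\S_Y(\E)$, then by Theorem \ref{thm from IC1}(i) the set $\pa\E_k\cap B_{x,\e/2}$ is Hausdorff-close to the $Y$-shaped set $\pa\E\cap B_{x,\e/2}$, and by perimeter convergence the ratio $\H^2(\pa\E_k\cap B_{y,\e/4})/(\e/4)^2$ at points $y$ near $x$ is close to $\theta_Y(0)>\pi$; so the flatness hypothesis can only be checked at scales that degenerate as $k\to\infty$, and the Arzel\`a--Ascoli step has no uniform scale to work with. That a regular, singularity-free $\pa\E_k$ cannot Hausdorff-approximate a $Y$-configuration is precisely the nontrivial content here, and the paper proves it by blowing up (via \cite[Lemma 4.14]{CiLeMaIC1}) to clusters $\F_j\to\Y$ with $\S(\F_j)\cap B_2=\emptyset$ and reaching a contradiction through a current-slicing/Stokes argument on the sheet of $\pa\F_j$ lying over the half-plane $\pa\Y(1)\cap\pa\Y(2)$.

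In Part (B), the converse inclusion $\S_T(\E)\subset I_\e(\S_T(\E_k))$ is not established: the sentence ``four distinct chambers can only meet at a $T$-point'' presupposes that the four chambers of $\E_k$ meet at a common point of $B_{x,r}$, which is what has to be proved; $L^1$ and interface-Hausdorff convergence only force the chambers and their interfaces to populate the ball, and a priori $\S(\E_k)\cap B_{x,r}$ could consist solely of $Y$-curves. Chamber-triple bookkeeping does rule out an arc of $\S_Y(\E_k)$ entering along one arm of $T_x\pa\E$ and leaving along a different arm (the triples differ), but it does not exclude, for instance, an arc making a U-turn along the same arm, so ``excluding the latter by Hausdorff control'' is not an argument --- and you flag this step yourself as the chief obstacle rather than closing it. The paper closes it with a second blow-up (to $4$-clusters $\F_j\to\T$ with $\S_T(\F_j)\cap B_2=\emptyset$), uses the already-proved parts of the theorem to show that near a fixed half-line $\ell\subset\S(\T)$ the set $\S(\F_j)$ is a single $C^{1,\a}$ arc crossing $B\setminus B_{1/2}$, and then obtains $0=1$ by integrating $d\om$, for a suitable cutoff $\om$, over the curve $\g_j\in\Gamma(\F_j)$ containing that arc, using $\bd(\g_j)\cap B_2=\emptyset$. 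Some topological argument of this kind (or a genuine substitute) is indispensable; without it, both the $\S_Y$- and $\S_T$-lower-semicontinuity statements in \eqref{convergence singular sets} remain unproved in your proposal.
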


\begin{proof}[Proof of Theorem \ref{thm convergence singular sets}] {\it Step one}: We prove \eqref{convergence tangent cones}, \eqref{covering 2} and \eqref{covering 3}. Up to a translation, we can assume that $x_k=x$ for every $k$. We first prove that for every $\eta\in(0,\e_0)$ we can find $k_x\in\N$ such that $\hd_{0,1}(T_x\pa\E,T_x\pa\E_k)<\eta$ if $k\ge k_x$. We start by noticing that by Theorem \ref{thm david plug} we can find $L$ and $\rho_0>0$ such that \eqref{david almost-minimal} holds with $M=\pa\E_k$, and then that, by Theorem \ref{thm david}-(i), we can find $\l>0$ such that, for each $k\in\N$,
\begin{equation}
  \label{monomono}
  r\mapsto \frac{\H^2(\pa\E_k\cap B_{x,r})}{r^2}+\l\,r\,,
\end{equation}
is increasing on $(0,\rho_0)$. We now claim that there exists $s_x\in(0,\rho_0/2)$ and $k_x\in\N$ such that
\begin{gather}
\label{david hp1xxx}
\frac{\hd_{x,s_x}(\pa\E_k,x+T_x(\pa\E))}{s_x}
+\Big(\frac{\H^2(\pa\E_k\cap B_{x,s_x})}{s_x^2}+\l\,s_x-\theta_{\pa\E_k}(x)\Big)\le\min\big\{\frac{\eta}{C_0},\e_0\big\}\,.
\end{gather}
Since $X'=T_x\pa\E$ satisfies $\theta_{X'}(0)=\theta_{\pa\E}(x)=\theta_{\pa\E_k}(x)$, by Theorem \ref{thm david}-(ii) and \eqref{david hp1xxx} we will deduce the existence of minimal cones $X_k$ such that if $k\ge k_x$, then
\[
\hd_{0,1}(X_k,X')\le \eta\,,\qquad \frac{\hd_{x,r}(\pa\E_k,x+X_k)}r\le \big(\frac{r}{s_x}\big)^\a\,,\qquad\forall r<\frac{s_x}{C_0}\,.
\]
The second inequality will then imply (in the limit $r\to 0^+$) that $X_k=T_x(\pa\E_k)$, so that the first inequality will give us $\hd_{0,1}(T_x\pa\E,T_x\pa\E_k)<\eta$, as required. We now check \eqref{david hp1xxx}.
  For a.e. $r>0$ one has $P(\E_k;B_{x,r})\to P(\E;B_{x,r})$, so that \eqref{monomono} gives us
  \begin{equation}
  \label{ma1 3}
    \limsup_{k\to\infty}\frac{\H^2(B_{x,r}\cap\pa\E_k)}{r^2} \le \frac{\H^2(\cl(B_{x,r})\cap\pa\E)}{r^2} \,,\qquad\forall r>0\,.
  \end{equation}
  Since $r^{-2}\H^2(\cl(B_{x,r})\cap\pa\E)\to \theta_{\pa\E}(x)=\theta_{\pa\E_k}(x)$ as $r\to 0^+$, by combining the definition of tangent cone to $\pa\E$ at $x$ with \eqref{ma1 3} we can find $s_x\in(0,\rho_0/2)$ such that
  \begin{equation}
    \label{pivello}
      \frac{\hd_{x,r}(\pa\E,x+T_x(\pa\E))}{r}+\Big(\frac{\H^2(\pa\E\cap B_{x,r})}{r^2}+\l\,r-\theta_{\pa\E}(x)\Big)\le
  \frac12\,\big\{\frac{\eta}{C_0},\e_0\big\}\,,
  \end{equation}
  for every $r\in(0,s_x]$. Moreover, by Theorem \ref{thm from IC1}-(i), for every $r\le s_x$ we can find $k_{x,r}\in\N$ such that
  \begin{equation}
    \label{pivello2}
      \frac{\hd_{x,r}(\pa\E_k,\pa\E)}{r}\le \frac12\,\big\{\frac{\eta}{C_0},\e_0\big\}\,,\qquad\forall k\ge k_{x,r}\,.
  \end{equation}
  If we take $r=s_x$ and $k_x=k_{x,s_x}$ then \eqref{pivello2} reduces to \eqref{david hp1xxx}, and thus proves \eqref{convergence tangent cones}. More generally, by combining \eqref{pivello} and \eqref{pivello2} one is able to apply Theorem \ref{thm david plug} to prove \eqref{covering 2} and \eqref{covering 3}.

\medskip

\noindent {\it Step two}: We prove the first line of \eqref{convergence singular sets}. By Theorem \ref{thm from IC1}-(ii) and since $\cl(\S_Y(\F))=\S(\F)$ for every $(\Lambda,r_0)$-minimizing cluster $\F$ in $\R^3$, it is enough to show that $\hd_{B_R}(\S_Y(\E_k),\S_Y(\E))\to 0$ (for every $R>0$) as $k\to\infty$. Arguing by contradiction and thanks to \cite[Lemma 4.14]{CiLeMaIC1}, we find a sequence $\de_j\to 0$ as $j\to\infty$ and $(\de_j,\de_j^{-1})$-minimizing $3$-clusters $\F_j$ in $\R^3$ such that
  \[
  \S(\F_j)\cap B_2=\emptyset\qquad\forall j\in\N\,,\qquad\lim_{j\to\infty}\d_{B_R}(\F_j,\Y)=0\qquad\forall R>0\,,
  \]
  where $\Y=\{\Y(i)\}_{i=1}^3$ is a reference $3$-cluster in $\R^3$ such that $\pa\Y=Y$. Notice that Theorem \ref{thm structure in R3} can be applied to describe the structure of $\pa\F_j$ and that Theorem \ref{thm from IC1} can be used to describe the convergence of $\pa\F_j$ to $\pa\Y$. Assuming without loss of generality that
  \[
  \pa\Y(1)\cap\pa\Y(2)=\big\{x\in\R^3:x_3=0\,,x_1\ge0\big\}\,,\qquad\S(\Y)=\big\{x\in\R^3:x_1=x_3=0\big\}
  \]
  let us consider, for $0<\rho<r$, the two-dimensional half-disk
  \[
  D_{r,\rho}=\Big(\pa\Y(1)\cap\pa\Y(2)\cap B_r\Big)\setminus I_\rho(\S(\Y))=\big\{x\in\R^3:x_3=0\,,x_1\ge\rho\,,x_1^2+x_2^2<r\big\}\,.
  \]
  By Theorem \ref{thm from IC1}-(iii) there exists $\rho_0>0$ such that for every $\rho<\rho_0$ there exist $j_0\in\N$, $\e<\rho$, and $\{\psi_j\}_{j\ge j_0}\subset C^1(D_{2,\rho})$ such that
  \[
  N_{\e}(D_{2,\rho})\cap\pa\F_j=(\Id+ \psi_j\,e_3)(D_{2,\rho})\,,\qquad\lim_{j\to\infty}\|\psi_j\|_{C^1(D_{2,\rho})}=0\,,
  \]
  where of course $N_{\e}(D_{2,\rho})=\{x\in\R^3:(x_1,x_2,0)\in D_{2,\rho}\,,|x_3|<\e\}$. By Theorem \ref{thm structure in R3}, there exists a unique $S_j\in\Ss(\F_j)$ such that
  \begin{equation}
    \label{curvetta bella}
      N_{\e}(D_{2,\rho})\cap S_j=N_{\e}(D_{2,\rho})\cap\pa\F_j=(\Id+ \psi_j\,e_3)(D_{2,\rho})\,.
  \end{equation}
  Notice that $S_j$ is a connected topological surface with boundary in $\R^3$, $S_j\setminus\S_T(\F_j)$ is a $C^{1,\a}$-surface with boundary in $\R^3$, and
  \[
  \bd_\tau(S_j)\cap B_2\subset\S(\F_j)\cap B_2=\emptyset\,.
  \]
  Hence, if $T_j$ denotes the $2$-dimensional multiplicity-one integral current $T_j$ associated with (one of the two possible orientations of) $S_j$
  , then $\spt(\pa T_j)\subset\bd_\tau(S_j)$, so that, in particular, $\pa T_j\llcorner B_2=0$. (Here and in the following, if $T$ is a current, then $\pa T$ denotes the boundary of $T$ in the sense of currents.) Let us consider the Lipschitz function
  \[
  f(x)=\max\{(x_1^2+x_2^2)^{1/2},|x_3|\}\,,\qquad  x\in\R^3\,,
  \]
  so that $f^{-1}(r)$ is the boundary of a cylinder along the $x_3$ axis, centered at the origin, of height $2r$ and radius $r$. For a.e. $r>0$ let us denote by $\Gamma_j^r=\langle T_j,f,r\rangle$ the slicing of $T_j$ by $f$ at $r$, see \cite[Definition 28.4]{SimonLN}. By definition, $\spt(\Gamma_j^r)\subset S_j\cap f^{-1}(r)$ and moreover for a.e. $0<r<1$ we have
  \begin{equation}
    \label{bordo nullooo}
      \pa\Gamma_j^r\llcorner \{f<1\}=0\,.
  \end{equation}
  Indeed $\{f<1\}\subset B_2$, $\pa T_j\llcorner B_2=0$ and, by \cite[Lemma 28.5]{SimonLN},
  \[
  \pa\Gamma_j^r=\pa\langle T_j,f,r\rangle=-\langle \pa T_j,f,r\rangle\,,\qquad\mbox{for a.e. $r>0$.}
  \]
  Let us now fix $r<1$ such that \eqref{bordo nullooo} holds, and let us consider $\rho<\rho_0$ with $10\rho<r$. By Theorem \ref{thm from IC1}-(i), up to further increasing the value of $j_0$ we have
  \begin{equation}
    \label{omega1}
      \spt(\Gamma_j^r)\subset S_j\cap f^{-1}(r)\subset\pa\F_j(1)\cap\pa\F_j(2)\cap f^{-1}(r)\subset I_\e(D_{2,0})\cap f^{-1}(r)\,,
  \end{equation}
  where thanks to $0<\e<\rho$ one has
  \begin{equation}
    \label{omega2}
      I_\e(D_{2,0})\cap f^{-1}(r)\subset A_1\cup A_2\cup A_3\,,
  \end{equation}
  for
  \[
  A_1=B_{(0,r,0),2\rho}\,,\qquad A_2=B_{(0,-r,0),2\,\rho}\,,
  \qquad
  A_3=\big\{x\in\R^3: |x_3|<\e\,, x_1^2+x_2^2=r\,,x_1>\rho\big\}\,.
  \]
  Let $\om$ be any compactly supported smooth $0$-form such that
  \begin{equation}
    \label{omega}
    \mbox{$\om=1$ on $B_{(0,r,0),3\rho}\supset A_1$ and $\om=2$ on $B_{(0,-r,0),3\rho}\supset A_2$}\,.
  \end{equation}
  (Such $\om$ exists as soon as $3 \rho<r$, whence it follows that $B_{(0,r,0),3\rho}$ and $B_{(0,-r,0),3\rho}$ are at positive distance.) In this way $d\om=0$ on $A_1\cup A_2$, and thus, by also taking \eqref{bordo nullooo} into account
  \begin{equation}
    \label{correntiiii}
      0=\int_{\Gamma_j^r}d\om=\int_{\Gamma_j^r\llcorner(A_3\setminus(A_1\cup A_2))}d\om\,.
  \end{equation}
  Now by \eqref{curvetta bella}, the inclusion $\spt(\Gamma_j^r)\subset S_j\cap f^{-1}(r)$ and the definition of $A_3$, there exists a $C^1$-curve with boundary $\g$ such that, if $T_\g$ denotes the one-dimensional multiplicity-one integral current associated with (one of the two orientations of) $\g$, then
  \[
  \Gamma_j^r\llcorner(A_3\setminus(A_1\cup A_2))=T_\g\,.
  \]
  Let $\bd(\g)=\{p_1,p_2\}$, then by construction we can assume $p_1\in B_{(0,r,0),3\rho}$ and $p_2\in B_{(0,-r,0),3\rho}$. By \eqref{correntiiii}, and up to reversing the orientation of $\g$, we thus find the contradiction
  \[
  0=\int_{T_\g}d\om=\om(p_2)-\om_(p_1)=1\,.
  \]
  This completes the proof of the first part of \eqref{convergence singular sets}.

  \medskip

  \noindent {\it Step three}: We are left to prove that if $R>0$, then $\hd_{B_R}(\S_T(\E_k),\S_T(\E))\to 0$ as $k\to\infty$. We first prove that $x_k\in\S_T(\E_k)$ with $x_k\to x$, then $x\in\S_T(\E)$. For sure $x\in\S(\E)$ thanks to step two. We may thus assume, arguing by contradiction, that $x\in\S_Y(\E)$. If this is the case, then there exists $r_x>0$ and an injective map $\s:\{1,2,3\}\to\{0,...,N\}$ such that $|\E(h)\cap B_{x,r_x}|=0$ if $h\ne\s(i)$, $i=1,2,3$. In particular, there exists $k_0\in\N$ such that, if $k\ge k_0$, then $|\E_k(h)\cap B_{x_k,r_x}|<\eta_0\,r_x^n$ whenever $h\ne\s(i)$, $i=1,2,3$, and with $\eta_0$ as in \cite[Lemma 4.5]{CiLeMaIC1}; in particular, by that lemma, $|\E_k(h)\cap B_{x_k,r_x/2}|=0$ for $h\ne\s(i)$, $i=1,2,3$. At the same time, since $x_k\in\S_T(\E_k)$, there exist $r_k>0$ with $r_k\to 0$ as $k\to\infty$ and injective maps $\s_k:\{1,2,3,4\}\to\{0,...,N\}$ such that $|B_{x_k,r_k}\cap\E_k(\s_k(i))|=(1/4)|B_{x_k,r_k}|+o(r_k^n)$ for every $i=1,2,3,4$. We have thus reached a contradiction, and proved our claim.

  We are thus left to show that if $x\in\S_T(\E)$, then there exists $x_k\in\S_T(\E_k)$ such that $x_k\to x$ as $k\to\infty$. To this end, we may directly consider the existence of $\e>0$ and $x\in\S_T(\E)$ such that $\S_T(\E_k)\cap B_{x,\e}=\emptyset$ for every $k\in\N$. By step one there exist $x_k\in\S_Y(\E_k)$ such that $x_k\to x$ as $k\to\infty$. By arguing as in the proof of \cite[Lemma 4.19]{CiLeMaIC1}
  we find a sequence $\de_j\to 0$ as $j\to\infty$ and $(\de_j,\de_j^{-1})$-minimizing $4$-clusters $\F_j$ in $\R^3$ such that
\[
  \S_T(\F_j)\cap B_2=\emptyset\qquad\forall j\in\N\,,\qquad\lim_{j\to\infty}\d_{B_R}(\F_j,\T)=0\qquad\forall R>0\,,
\]
where $\T=\{\T(i)\}_{i=1}^4$ is a reference $4$-cluster in $\R^3$ such that $\pa\T=T$. Let us then denote by $\ell$ one of the four closed half-lines contained in $\S(\T)$. By step one and step two, for every $y\in \ell\setminus B_{1/2}\subset\S_Y(\T)$ we can find $s_y>0$ and $y_j\in\S_Y(\F_j)$ such that $y_j\to y$ as $j\to\infty$ and there exist $C^{1,\a}$-diffeomorphisms $\Phi$ and $\Phi_j$ satisfying \eqref{covering 2} and \eqref{covering 3} (with $\T$, $\F_j$, $y$, $y_j$ and $s_y$ in place of $\E$, $\E_k$, $x$, $x_k$ and $s_x$). As a consequence,
  \[
  B_{y,s_y}\cap \S(\F_j)=B_{y,s_y}\cap \S_Y(\F_j)=B_{y,s_y}\cap\Phi_j(B_{0,2s_y}\cap\s_Y(T_y\T))\,,
  \]
  so that $B_{y,s_y}\cap \S(\F_j)$ is $C^{1,\a}$-diffeomorphic to $(0,1)$. By \eqref{covering 2}, \eqref{covering 3}, and by the connectedness of the curves in $\Gamma(\F_j)$ (see Theorem \ref{thm structure in R3} for the notation used here) we see that there exist $\de>0$ and $\g_j\in\Gamma(\F_j)$ such that
  \[
  \S(\F_j)\cap I_\de\big(\ell\cap (B\setminus B_{1/2})\big)=\g_j\cap (B\setminus B_{1/2})
  \]
  and $\g_j^*=\g_j\cap (B\setminus B_{1/2})$ is $C^{1,\a}$-diffeomorphic to $(0,1)$. Let $\om$ be a smooth $0$-form with $\om=1$ on $B_{2/3}$ and $\spt\om\cc B$. By Stokes theorem, up to a change in orientation,
  \[
  \int_{\g_j} d\om=\int_{\g_j^*}d\om=1\,.
  \]
  By $\S_T(\F_j)\cap B_2=\emptyset$ we have $\bd(\g_j)\cap B_2=\emptyset$, which combined with $\spt\om\cc B$ gives us
  \[
  \int_{\g_j} d\om=0\,.
  \]
  We have thus reached a contradiction, and completed the proof of the theorem.
\end{proof}

\section{Stratified boundary convergence}\label{section stratified boundary convergence} In this section we fix $\Lambda\ge0$, $r_0>0$, and assume that (recall Remark \ref{remark nuS} and compare with \eqref{hpstar})
\begin{equation}
  \begin{split}
    \label{hp}
    &\mbox{$\E$ is a bounded $(\Lambda,r_0)$-minimizing cluster in $\R^3$ of class $C^{2,1}$}\,,
    \\
    &\mbox{$\{\E_k\}_{k\in\N}$ is a sequence of $(\Lambda,r_0)$-minimizing clusters in $\R^3$}\,,
    \\
    &\mbox{$\d(\E_k,\E)\to 0$ as $k\to\infty$}\,.
  \end{split}
\end{equation}
We also let $\a$ and $C_0$ be as in Theorem \ref{thm david}. We then start proving a series of theorems and lemmas which will eventually lead us to prove Theorem \ref{thm improved convergence 3d}.

We shall often refer to the following consequence of Theorem \ref{thm convergence singular sets}: if \eqref{hp} holds and $n=3$, then for every $\de>0$ we can find $k_0\in\N$ such that
\begin{equation}
  \label{k0 basic}
  \begin{split}
  &\S(\E)\subset I_\de(\S(\E_k))\,,\qquad\S_T(\E)\subset I_\de(\S_T(\E_k))\,,
  \\
  &\S(\E_k)\subset I_\de(\S(\E))\,,\qquad\S_T(\E_k)\subset I_\de(\S_T(\E))\,,
  \end{split}\qquad\forall k\ge k_0\,.
\end{equation}
Moreover, by exploiting the finiteness of $\S_T(\E)$, we have that, for some $\de_0>0$,
\begin{equation}
  \label{k0 basic ST}
  \H^0(B_{x,\de_0}\cap\S_T(\E_k))=1\,,\qquad\forall x\in\S_T(\E)\,, k\ge k_0\,.
\end{equation}
In the next lemma we parameterize $\pa\E$ and $\pa\E_k$ around nearby singular points at comparable scales through Theorem \ref{thm david}.

\begin{lemma}\label{lemma covering sing sets} If \eqref{hp} holds, then for every $\de>0$ one can find $k_0\in\N$ and finite sets $\{x^i\}_{i\in I}\subset\S(\E)$, $\{x^i_k\}_{i\in I}\subset\S(\E_k)$ and $\{t^i\}_{i\in I}\subset(0,\de/2)$ such that, for every $k\ge k_0$ and $i\in I$,
  \begin{equation}
    \label{covering punti}
      \S_T(\E)\subset\{x^i\}_{i\in I}\,,\quad \S_T(\E_k)\subset\{x^i_k\}_{i\in I}\,,\quad\lim_{k\to\infty}x^i_k=x^i\,,\quad\theta_{\pa\E_k}(x^i_k)=\theta_{\pa\E}(x^i)\,,
  \end{equation}
  \begin{equation}
    \label{covering sigma E and sigma Ek covering}
  \S(\E)\subset\bigcup_{i\in I}B_{x^i,t^i/3}\,,\qquad \S(\E_k)\subset\bigcup_{i\in I}B_{x^i_k,2t^i/3}\,,
  \end{equation}
  and such that for every $r\le t^i$ there exists a $C^{1,\a}$-diffeomorphism $\Phi^i_r:B_{0,2r}\to A^i_r=\Phi^i_r(B_{0,2\,r})$
  \begin{equation}\label{covering 1}
    \begin{split}
      &\mbox{$\Phi^i_r(0)=x^i$, $B_{x^i,r}\subset A^i_r$, $B_{0,r/C_0}\subset(\Phi^i_r)^{-1}(B_{x^i,r})$}\,,
      \\
      &\Phi^i_r(B_{0,2r}\cap T_{x^i}\pa\E)\cap B_{x^i,r}=B_{x^i,r}\cap\pa\E\,,
      \\
      &\Phi^i_r(B_{0,2r}\cap \s_Y(T_{x^i}\pa\E))\cap B_{x^i,r}=B_{x^i,r}\cap\S_Y(\E)\,,
      \\
      &\|\Phi^i_r\|_{C^{1,\a}(B_{0,2r})}+\|(\Phi^i_r)^{-1}\|_{C^{1,\a}(A^i_r)}\le C_0\,,
  \end{split}
  \end{equation}
  and there exists a $C^{1,\a}$-diffeomorphism $\Phi^i_{r,k}:B_{0,2r}\to A^i_{r,k}=\Phi^i_{r,k}(B_{0,2r})$ with
  \begin{equation}\label{covering 1k}
  \begin{split}
      &\mbox{$\Phi_{k,r}^i(0)=x_k^i$, $B_{x^i_k,r}\subset\Phi^i_{k,r}(B_{0,2r})$, $B_{0,r/C_0}\subset(\Phi^i_{k,r})^{-1}(B_{x^i_k,r})$}\,,
      \\
      &\Phi_{k,r}^i(B_{0,2r}\cap T_{x_k^i}\pa\E_k)\cap B_{x_k^i,r}=B_{x_k^i,r}\cap\pa\E_k\,,
      \\
      &\Phi_{k,r}^i(B_{0,2r}\cap \s_Y(T_{x_k^i}\pa\E_k))\cap B_{x_k^i,r}=B_{x_k^i,r}\cap\S_Y(\E_k)\,,
      \\
      &\|\Phi_{k,r}^i\|_{C^{1,\a}(B_{0,2r})}+\|(\Phi^i_k)^{-1}\|_{C^{1,\a}(A^i_{k,r})}\le C_0\,.
    \end{split}
  \end{equation}
  Moreover, $\{x^i\}_{i\in I}$ can be chosen in such a way that for every $\g\in\Gamma(\E)$ and $S\in\Ss(\E)$, one has
  \begin{equation}
    \label{perbenino gamma}
      \g\subset \bigcup_{i\in I(\g)}B_{x^i,t^i/3}\,,\qquad \S_T(\E)\cap\g\subset \{x^i\}_{i\in I(\g)}\,,
  \end{equation}
  \begin{equation}
    \label{perbenino S}
      S\cap\S(\E)\subset \bigcup_{i\in I(S)}B_{x^i,t^i/3}\,,\qquad \S_T(\E)\cap S\subset\{x^i\}_{i\in I(S)}\,.
  \end{equation}
  where $I(\g)=\{i\in I:x^i\in\g\}$ and $I(S)=\{i\in I:x^i\in S\}$.
\end{lemma}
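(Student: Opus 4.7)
The plan is to construct $\{x^i\}_{i \in I} \subset \S(\E)$ in two layers—first the isolated type-$T$ points, then additional centers along each type-$Y$ curve—and then invoke Theorem \ref{thm convergence singular sets} to produce the matching approximations $x^i_k$ and the diffeomorphisms of \eqref{covering 1}, \eqref{covering 1k}. Since $\E$ is bounded, Theorem \ref{thm structure in R3} gives that $\S_T(\E)$ is finite and $\Gamma(\E)$ is a finite family of compact $C^{1,\a}$-curves with boundaries in $\S_T(\E)$. First I would take $I_T$ with $\S_T(\E) = \{x^i\}_{i \in I_T}$, and for each such $x^i$ combine the matching of type-$T$ points (established in step three of the proof of Theorem \ref{thm convergence singular sets}) with the tangent-cone parametrization \eqref{covering 2}, \eqref{covering 3} to get $x^i_k \in \S_T(\E_k)$ with $x^i_k \to x^i$, $\theta_{\pa\E_k}(x^i_k) = \theta_T(0) = \theta_{\pa\E}(x^i)$, a scale $s_{x^i} > 0$, and the diffeomorphisms $\Phi^i_r$, $\Phi^i_{k,r}$ of \eqref{covering 1}, \eqref{covering 1k} at each $r \le s_{x^i}$.

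For each $\g \in \Gamma(\E)$ the layer-1 balls $B_{x^i, t^i/6}$ with $i \in I_T(\g) := \{i \in I_T : x^i \in \bd(\g)\}$ cover a relative neighborhood in $\g$ of its endpoints, so the residue $K_\g := \g \setminus \bigcup_{i \in I_T(\g)} B_{x^i, t^i/6}$ is a compact subset of $\INT(\g) \subset \S_Y(\E)$. I would then apply Theorem \ref{thm convergence singular sets} at each $x \in K_\g$—this time producing $x_k \in \S_Y(\E_k)$ with $x_k \to x$, using \eqref{convergence singular sets} together with the fact (again from step three of its proof) that no interior $Y$-point of $\E$ is the limit of type-$T$ points of $\E_k$—and extract a finite subcover of $K_\g$ by balls $B_{x^j, t^j/6}$, $j \in I_Y(\g)$. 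Setting $I(\g) = I_T(\g) \cup I_Y(\g)$, $I = I_T \cup \bigcup_\g I_Y(\g)$ and $I(S) = \{i \in I : x^i \in S\}$, and shrinking each $t^i$ to be less than $\de/2$ and small enough (via \eqref{k0 basic}, \eqref{k0 basic ST}) that $B_{x^i, t^i}$ contains only the expected points of $\S_T(\E) \cup \S_T(\E_k)$, the covering \eqref{perbenino gamma} and the $\S(\E)$-inclusion in \eqref{covering sigma E and sigma Ek covering} are immediate, while \eqref{perbenino S} follows from $S \cap \S(\E) \subset \S_T(\E) \cup \bigcup_{\g \subset \bd(S^*)} \g$ as read off from \eqref{structure 2}.

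The hard part, and the reason for the factor two between the radii $t^i/3$ and $2t^i/3$, is the second inclusion in \eqref{covering sigma E and sigma Ek covering}. I would fix $\eta < \tfrac{1}{6}\min_i t^i$ and, using \eqref{convergence singular sets} and $x^i_k \to x^i$, choose $k_0$ so large that $\S(\E_k) \subset I_\eta(\S(\E))$ and $|x^i_k - x^i| < \eta$ for every $i \in I$ and $k \ge k_0$. Then for any $y \in \S(\E_k)$, picking $z \in \S(\E) \cap B_{y,\eta}$ and $i$ with $z \in B_{x^i, t^i/3}$, one obtains $|y - x^i_k| \le |y - z| + |z - x^i| + |x^i - x^i_k| < t^i/3 + 2\eta < 2t^i/3$, as required. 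The main obstacle is the coordinated bookkeeping: every $t^i$ must simultaneously be small enough to realize the tangent-cone parametrizations \eqref{covering 1}, \eqref{covering 1k} for both $\E$ and $\E_k$, stay below $\de/2$, isolate the assigned type-$T$ point, and leave the factor-two slack for the triangle inequality above. A final shrinkage of the finitely many $t^i$ together with a single maximization over the finitely many thresholds to fix $k_0$ then closes the construction.
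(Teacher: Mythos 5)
Your proposal is correct and follows essentially the same route as the paper: pick finitely many centers in $\S(\E)$ containing all of $\S_T(\E)$ at scales $t^i=\min\{s_{x^i},\de/2\}$ supplied by Theorem \ref{thm convergence singular sets}, use the Hausdorff convergence of $\S_Y(\E_k)$ and $\S_T(\E_k)$ (together with \eqref{k0 basic}, \eqref{k0 basic ST}) to produce the matched points $x^i_k$ of the same cone type, obtain \eqref{covering 1}, \eqref{covering 1k} from \eqref{covering 2}, \eqref{covering 3}, and close the second inclusion of \eqref{covering sigma E and sigma Ek covering} by the same $t^*/6$ triangle-inequality slack the paper uses. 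Your two-layer selection (T-points first, then centers chosen on each curve) is just a slightly more explicit way of arranging what the paper states without detail to guarantee \eqref{perbenino gamma} and \eqref{perbenino S}, so there is no substantive difference.
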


\begin{remark}
  {\rm By considering \eqref{covering 1} at $r=t^i$ we infer that $B_{x^i,t^i}\cap\S_Y(\E)$ is homeomorphic to $(0,1)$. This fact alone does not imply, of course, that $B_{x^i,r}\cap\S_Y(\E)$ is homeomorphic to $(0,1)$ for every $r<t^i$. The latter property is guaranteed by the fact that \eqref{covering 1} holds for every $r\le t^i$.}
\end{remark}

\begin{proof}
  Given $\de>0$ and $x\in\S(\E)$ let $t_x=\min\{s_x,\de/2\}$ for $s_x$ as in Theorem \ref{thm convergence singular sets}. Since $\pa\E$ is bounded, so is $\S(\E)$, while $\S_T(\E)$ is finite. By Theorem \ref{thm convergence singular sets} and by compactness we can find $\{x^i\}_{i\in I}\subset\S(\E)$ finite with $\S_T(\E)\subset\{x^i\}_{i\in I}$, such that the first inclusion in \eqref{covering sigma E and sigma Ek covering} holds, namely
  \begin{equation}
    \label{covering 4}
      \S(\E)\subset \bigcup_{i\in I}B_{x^i,t^i/3}\,,\qquad t^i=t_{x^i}\,,
  \end{equation}
  and such that \eqref{covering 1} holds. By \eqref{convergence singular sets} in Theorem \ref{thm convergence singular sets} for every $i\in I$ there exists $x^i_k\in\S(\E_k)$ with $\theta_{\pa\E_k}(x^i_k)=\theta_{\pa\E}(x^i)$ and $x^i_k\to x^i$ as $k\to\infty$. If $t^*=\min\{t^i:i\in I\}$, then, up to further increase the value of $k_0$ we can entail $\S(\E_k)\subset I_{t^*/6}(\S(\E))$ and $|x^i-x^i_k|<t^*/6$ for every $i\in I$ and $k\ge k_0$, so that by \eqref{covering 4}
  \[
  \S(\E_k)\subset \bigcup_{i\in I}B_{x^i,t^i/2}\subset \bigcup_{i\in I}B_{x^i_k,2t^i/3}\,.
  \]
  This proves \eqref{covering punti} and \eqref{covering sigma E and sigma Ek covering}, while \eqref{covering 1k} follows by \eqref{covering 3} in Theorem \ref{thm convergence singular sets} up to further increase the value of $k_0$.
  \end{proof}

We now introduce some further notation (in addition to the one set in Lemma \ref{lemma covering sing sets}) to be used in the rest of this section. Since $\pa\E$ is bounded (as assumed in \eqref{hp}), thanks to Theorem \ref{thm structure in R3} we find that the sets $\S_T(\E)$, $\Gamma(\E)$ and $\Ss(\E)$ are finite. We consider the partition $\{\Gamma_T(\E),\Gamma_Y(\E)\}$ of $\Gamma(\E)$ defined by
  \begin{eqnarray*}
  \Gamma_T(\E)&=&\big\{\g\in\Gamma(\E):\g\cap\S_T(\E)\ne\emptyset\big\}\,,
  \\
  \Gamma_Y(\E)&=&\big\{\g\in\Gamma(\E):\g\cap\S_T(\E)=\emptyset\big\}=\big\{\g\in\Gamma(\E):\g\subset\S_Y(\E)\big\}\,,
  \end{eqnarray*}
  (so that each $\g\in\Gamma(\E)$ is either diffeomorphic to $\SS^1$ or to $[0,1]$ depending on whether $\g\in\Gamma_Y(\E)$ or $\g\in\Gamma_T(\E)$)
  and the partition $\{\Ss_\S(\E),\Ss_*(\E)\}$ of $\Ss(\E)$ obtained by letting
  \begin{eqnarray*}
  \Ss_\S(\E)&=&\big\{S\in\Ss(\E):S\cap\S(\E)\ne\emptyset\big\}\,,
  \\
  \Ss_*(\E)&=&\big\{S\in\Ss(\E):S\cap\S(\E)=\emptyset\big\}=\big\{S\in\Ss(\E):S\subset\pa^*\E\big\}\,.
  \end{eqnarray*}
  In the next lemma we associate to every curve $\g\in\Gamma(\E)$ a corresponding curve $\g_k\in\Gamma(\E_k)$ in such a way that $\hd(\g,\g_k)\to 0$. This correspondence will be used in the rest of the proof of Theorem \ref{thm improved convergence 3d}.

\begin{lemma}\label{lemma gammak}
  If \eqref{hp} holds, then there exists $k_0\in\N$ with the following property: to every $\g\in\Gamma(\E)$ and $k\ge k_0$ one can associate $\g_k\in\Gamma(\E_k)$ in such a way that $\g_k\in\Gamma_T(\E_k)$ if and only if $\g\in\Gamma_T(\E)$ and
  \begin{equation}
    \label{gammak hausdorf convergence}
      \lim_{k\to\infty}\hd(\g,\g_k)+\hd(\bd(\g),\bd(\g_k))=0\,,
  \end{equation}
  \begin{equation}
    \label{gammak Sigma decomposition}
      \S(\E_k)=\bigcup_{\g\in\Gamma(\E)}\g_k\,,\qquad \S_T(\E_k)=\bigcup_{\g\in\Gamma_T(\E)}\bd(\g_k)\,,\qquad\forall k\ge k_0\,.
  \end{equation}
\end{lemma}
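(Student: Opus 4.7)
The plan is to build $\g_k$ for each $\g\in\Gamma(\E)$ by tracking $\g$ through the local $C^{1,\a}$-diffeomorphisms supplied by Lemma \ref{lemma covering sing sets}, and then use the Hausdorff convergence of singular sets (Theorem \ref{thm convergence singular sets}) together with a connectedness argument to identify $\g_k$ as the unique element of $\Gamma(\E_k)$ visited by this tracking. First I would apply Lemma \ref{lemma covering sing sets} with a small $\de>0$ chosen so that the $\de$-neighborhoods of distinct curves in the (necessarily finite) family $\Gamma(\E)$ are pairwise disjoint, so that \eqref{k0 basic ST} holds, and so that $\de$ is smaller than $\rho_0/2$. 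This produces the finite set $\{x^i\}_{i\in I}\supset\S_T(\E)$, companion points $x_k^i\in\S(\E_k)$ with $x_k^i\to x^i$, radii $t^i<\de/2$, and diffeomorphisms $\Phi_r^i$, $\Phi_{k,r}^i$ whose combinatorial pattern, via \eqref{perbenino gamma}--\eqref{perbenino S}, mirrors the topology of each $\g$.

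For $\g\in\Gamma_Y(\E)$, the curve $\g$ is a simple loop in $\S_Y(\E)$ disjoint from $\S_T(\E)$. For each $i\in I(\g)$, the map $\Phi_{k,t^i}^i$ identifies $\S(\E_k)\cap B_{x_k^i,t^i}$ with an open segment in $T_{x_k^i}\pa\E_k$, hence a $C^{1,\a}$-piece of a single element of $\Gamma(\E_k)$. Because consecutive balls in the cover of $\g$ overlap and carry overlapping arcs of $\S_Y(\E)$, the Hausdorff convergence $\hd_{B_R}(\S_Y(\E_k),\S_Y(\E))\to 0$ forces the corresponding arcs of $\S_Y(\E_k)$ to overlap for large $k$, and therefore all lie in a common element $\g_k\in\Gamma(\E_k)$. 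The choice of $\de$ together with \eqref{k0 basic} guarantees $\g_k\cap\S_T(\E_k)=\emptyset$, so $\g_k\in\Gamma_Y(\E_k)$.

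For $\g\in\Gamma_T(\E)$ with $\bd(\g)=\{x^{i_0},x^{i_1}\}$, property \eqref{covering 1} shows that $\S_Y(\E)\cap B_{x^{i_0},t^{i_0}}$ is the image under $\Phi_{t^{i_0}}^{i_0}$ of the four open half-lines of $\s_Y(T_{x^{i_0}}\pa\E)$, exactly one of which, call it $\b_0$, is an initial arc of $\g$. The tangent cone convergence \eqref{convergence tangent cones} forces the four half-lines in $\s_Y(T_{x_k^{i_0}}\pa\E_k)$ to converge to those of $\s_Y(T_{x^{i_0}}\pa\E)$, so for $k$ large exactly one of the four arcs of $\S_Y(\E_k)$ emanating from $x_k^{i_0}$ lies near $\b_0$; this arc singles out $\g_k\in\Gamma(\E_k)$. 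Tracking $\g_k$ through the consecutive balls $B_{x_k^i,t^i}$, $i\in I(\g)$, as in the $\Gamma_Y$ case, $\g_k$ reaches $B_{x_k^{i_1},t^{i_1}}$, and by \eqref{k0 basic ST} the only candidate for its second endpoint there is $x_k^{i_1}$, yielding $\bd(\g_k)=\{x_k^{i_0},x_k^{i_1}\}$ and $\g_k\in\Gamma_T(\E_k)$.

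To finish I would check \eqref{gammak Sigma decomposition} and \eqref{gammak hausdorf convergence}. The inclusions \eqref{k0 basic} and the disjointness of the tubes $I_\de(\g)$ give $\S(\E_k)\subset\bigcup_{\g\in\Gamma(\E)}I_\de(\g)$, so every element of $\Gamma(\E_k)$ is contained in the tube of some $\g$ and is therefore captured by the construction; the $\S_T$-part of \eqref{gammak Sigma decomposition} follows because, by \eqref{k0 basic ST} and the choice of $\de$, each $\g\in\Gamma_T(\E)$ contributes exactly two points of $\S_T(\E_k)$, namely $x_k^{i_0}$ and $x_k^{i_1}$, and no $T$-points arise along curves in $\Gamma_Y(\E)$. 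The Hausdorff convergence \eqref{gammak hausdorf convergence} follows from the containment $\g_k\subset\bigcup_{i\in I(\g)}B_{x_k^i,t^i}$, the uniform $C^{1,\a}$-bounds \eqref{covering 1}--\eqref{covering 1k}, and $x_k^i\to x^i$, upon letting $\de\to 0$. The main obstacle is the consistent labeling at each $T$-point of the four arcs of $\S_Y(\E_k)$ at $x_k^{i_0}$ with the four arcs of $\S_Y(\E)$ at $x^{i_0}$; this is handled precisely by the tangent cone convergence \eqref{convergence tangent cones}, which makes the four half-lines in the limiting cone converge and hence uniquely pair up for $k$ large.
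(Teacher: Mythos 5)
Your overall strategy is the same as the paper's (cover $\g$ by the balls of Lemma \ref{lemma covering sing sets}, chain the arcs $B_{x^i_k,t^i}\cap\S(\E_k)$ into a single $\g_k\in\Gamma(\E_k)$, then use Theorem \ref{thm convergence singular sets} and the counting of $T$-points), but there is a genuine gap at the step on which the whole lemma hinges: you never prove that $\g_k$ is contained in a small tube around $\g$ (equivalently, in $\bigcup_{i}B_{x^i_k,t^i}$). The chaining argument only shows that the arcs $A_i=B_{x^i_k,t^i}\cap\S(\E_k)$ all lie in one curve $\g_k$; a priori $\g_k$ could contain further pieces outside the covering balls, and those pieces could carry points of $\S_T(\E_k)$ or drift toward other components of $\S(\E)$. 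Both your claim that ``the choice of $\de$ together with \eqref{k0 basic} guarantees $\g_k\cap\S_T(\E_k)=\emptyset$'' and your final claim that \eqref{gammak hausdorf convergence} ``follows from the containment $\g_k\subset\bigcup_{i\in I(\g)}B_{x^i_k,t^i}$'' presuppose exactly this unproved containment: \eqref{k0 basic} only localizes $\S_T(\E_k)$ near $\S_T(\E)$, which is useful only after one knows $\g_k$ stays near $\g$, and Hausdorff convergence needs $\g_k\subset I_\de(\g)$, not merely $\g\subset I_\de(\g_k)$. The paper closes this gap with a separation-plus-connectedness argument: choosing $\de_0$ so that \eqref{gammak delta0 una sola gamma Y} holds, it shows $\g_k\cap\cl(A)\subset I_s(\g)$ for $I_s(\g)\cc A=\bigcup_iB_{x^i,t^i/3}$, so that $\g_k$ splits into the two disjoint relatively open sets $\g_k\cap I_s(\g)$ and $\g_k\setminus\cl(A)$, and connectedness forces $\g_k\subset I_s(\g)$. (For $\g\in\Gamma_T(\E)$ one can instead observe that the chain is a connected subset of the arc $\g_k$ containing both boundary points, hence all of $\g_k$; but this too must be said, since your phrase ``the only candidate for its second endpoint there is $x^{i_1}_k$'' already assumes the second endpoint lies in the last ball.)

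A secondary, but real, defect is your opening normalization: it is impossible to choose $\de$ so that the $\de$-neighborhoods of \emph{distinct} curves of $\Gamma(\E)$ are pairwise disjoint, because distinct curves of $\Gamma_T(\E)$ generically share endpoints in $\S_T(\E)$ (four curve-ends meet at each $T$-point), so any two such tubes intersect. The separation one can actually arrange is the weaker, tailored one the paper uses, namely \eqref{mean delta0} for $\g\in\Gamma_T(\E)$ and \eqref{gammak delta0 una sola gamma Y} for $\g\in\Gamma_Y(\E)$. This matters again in your proof of \eqref{gammak Sigma decomposition}, where the alleged disjointness of the tubes is what places each $\tilde\g\in\Gamma(\E_k)$ inside the tube of a single $\g$; the paper instead handles $\tilde\g\in\Gamma_T(\E_k)$ via the identity $\S_T(\E_k)=\bigcup_{\g\in\Gamma_T(\E)}\bd(\g_k)$ and $\tilde\g\in\Gamma_Y(\E_k)$ via the inclusion $\S(\E_k)\cap I_{t^*/2}(\g)\subset\g_k$ from \eqref{mean finale 0}. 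With these two points repaired the remainder of your outline (tangent-cone pairing at $T$-points via \eqref{convergence tangent cones}, identification of $\bd(\g_k)$ via \eqref{k0 basic ST}) matches the paper's proof.
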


\begin{proof}
   We choose $\de_0$ to be such that
   \begin{eqnarray}
   \label{mean delta0}
       I_{\de_0}(\S_T(\E))\cap I_{\de_0}(\g)=I_{\de_0}(\bd(\g))\,,&&\qquad\forall \g\in\Gamma_T(\E)\,,
       \\
   \label{gammak delta0 una sola gamma Y}
       I_{\de_0}(\g)\cap I_r(\S(\E))=I_r(\g)\,,&&\qquad\forall\g\in\Gamma_Y(\E)\,,\forall r\in[0,\de_0]\,.
   \end{eqnarray}
   Given $\de\in(0,\de_0)$, define $I$, $k_0$, $\{x^i\}_{i\in I}\subset\S(\E)$, $\{x^i_k\}_{i\in I}\subset\S(\E_k)$ and $\{t^i\}_{i\in I}\subset(0,\de/2)$ as in Lemma \ref{lemma covering sing sets}. Note that one can always assume
   \begin{equation}
     \label{gammak xi xik vicini}
        t^*=\min\big\{t^i:i\in I\big\}\,,\qquad |x^i-x^i_k|<\frac{t^*}6\,,\qquad\forall k\ge k_0\,,\quad i\in I\,,
   \end{equation}
   where of course $t^*>0$ as $I$ is finite. With reference to \eqref{perbenino gamma}, given $\g\in\Gamma(\E)$ let us set $I(\g)=\{0,...,m\}$, so that \eqref{perbenino gamma} implies
   \begin{equation}
   \label{gammak covering gamma}
   \g\subset\bigcup_{i=0}^mB_{x^i,t^i/3}\,,\qquad\mbox{$x^i\in\g$ for $i=0,...,m$}\,.
  \end{equation}
   We now divide the proof in three steps.

   \medskip

   \noindent {\it Step one}: We show that to each $\g\in\Gamma_Y(\E)$ and $k\ge k_0$ one can associate $\g_k\in\Gamma_Y(\E_k)$ in such a way that
   \begin{equation}
     \label{mean finale 0}
     \S(\E_k)\cap I_{t^*/2}(\g)\subset\g_k\subset I_\de(\g)\,,\qquad \g\subset I_\de(\g_k)\,.
   \end{equation}
   Indeed, by \eqref{covering 1} and by $x^i\in\g\subset\S_Y(\E)$, one has that $B_{x^i,t^i}\cap\S(\E)$ is $C^{1,\a}$-diffeomorphic to $(0,1)$, so that $x^i\in\g$ and the connectedness of $\g$ imply
  \begin{equation}\label{gammak caso Y 1}
  \mbox{$B_{x^i,t^i}\cap\S(\E)=B_{x^i,t^i}\cap\g$ is homeomorphic to $(0,1)$ for every $i=0,...,m$}\,.
  \end{equation}
  Since $\g$ is homeomorphic to $\SS^1$, by \eqref{gammak covering gamma} and \eqref{gammak caso Y 1}, up a relabeling in the index $i$ and up to possibly discard some balls $B_{x^i,t^i}$, one can entail that (setting $x^{m+1}=x^0$, $x^{m+1}_k=x^0_k$ and $t^{m+1}=t^0$)
  \begin{equation}\label{gammak caso Y 2}
  B_{x^i,t^i}\cap B_{x^{i+1},t^{i+1}}\cap\g\ne\emptyset\,,\qquad\forall i=0,...,m\,.
  \end{equation}
  If $z^i\in B_{x^i,t^i}\cap B_{x^{i+1},t^{i+1}}\cap\g$, then for some $\e^i>0$ we have $B_{z^i,\e^i}\subset B_{x^i,t^i}\cap B_{x^{i+1},t^{i+1}}$, while by Theorem \ref{thm convergence singular sets} there exist $z^i_k\in\S(\E_k)$ such that $z_k^i\to z^i$ as $k\to\infty$, so that \eqref{gammak caso Y 2} implies
  \begin{equation}
     \label{gammak caso Y 3}
     B_{x^i_k,t^i}\cap B_{x^{i+1}_k,t^{i+1}}\cap\S(\E_k)\ne\emptyset\,,\qquad\forall i=0,...,m\,.
   \end{equation}
   By \eqref{covering 1k}, for every $i$ and $k\ge k_0$, $B_{x^i_k,t^i}\cap\S(\E_k)$ is $C^{1,\a}$-diffeomorphic to $(0,1)$, so that for each $i$ there exists a unique $\g^i_k\in\Gamma(\E_k)$ such that $B_{x^i_k,t^i}\cap\S(\E_k)=B_{x^i_k,t^i}\cap\g^i_k$. By \eqref{gammak caso Y 3} and by connectedness of each curve in $\Gamma(\E_k)$, it must be $\g^i_k=\g^{i+1}_k$ for every $i=0,...,m$. In other words, there exists $\g_k\in\Gamma(\E_k)$ such that
   \begin{equation}
     \label{gammak caso Y 4}
     \mbox{$B_{x^i_k,t^i}\cap\S(\E_k)=B_{x^i_k,t^i}\cap\g_k$ is homeomorphic to $(0,1)$ for every $i=0,...,m$}\,.
   \end{equation}
   By \eqref{gammak covering gamma}, there exists $s\in(0,\de_0)$ such that
   \[
   I_s(\g)\cc A=\bigcup_{i=0}^mB_{x^i,t^i/3}\,.
   \]
   In particular, by \eqref{gammak caso Y 4}, by Theorem \ref{thm convergence singular sets} (so that $\S(\E_k)\subset I_s(\S(\E))$ for $k\ge k_0$), by $\cl(A)\subset I_{\de_0}(\g)$ and by \eqref{gammak delta0 una sola gamma Y}, one finds
   \begin{eqnarray*}
     \cl(A)\cap\g_k\ =\ \cl(A)\cap\S(\E_k)\ \subset \ \cl(A)\cap I_s(\S(\E_k))\ \subset  \ I_{\de_0}(\g)\cap I_s(\S(\E)) \ = \ I_s(\g)\,,
    \end{eqnarray*}
   so that
   \[
   \g_k=(\g_k\cap\cl(A))\cup(\g_k\setminus\cl(A))=(\g_k\cap I_s(\g))\cup(\g_k\setminus\cl(A))\,.
   \]
   Since $I_s(\g)$ and $\R^n\setminus\cl(A)$ are disjoint open sets, by connectedness of $\g_k$, we conclude that $\g_k\subset I_s(\g)$. This implies that, for $k$ large enough, $\g_k\in\Gamma_Y(\E_k)$: for otherwise, there would be a sequence $w_k\in\S_T(\E_k)\cap\g_k$ such that $w_k\to w\in\g\subset\S_Y(\E)$, a contradiction to Theorem \ref{thm convergence singular sets}. Thus,
   \begin{equation}
     \label{gammak caso Y 5}
     \g_k=\bigcup_{i=0}^mB_{x^i_k,t^i}\cap\S(\E_k)\in\Gamma_Y(\E_k)\,.
   \end{equation}
   By \eqref{gammak covering gamma}, \eqref{gammak xi xik vicini} and $x^i_k\subset\g_k$ we find
   \[
   \g\subset \bigcup_{i=0}^mB_{x^i,t^i/3}\subset \bigcup_{i=0}^mB_{x^i_k,t^i/3+t^*/6}\subset I_{t^*}(\g_k)\,.
   \]
   Similarly, one proves that $\g_k\subset \S(\E_k)\cap I_\de(\g)$, and actually by \eqref{gammak covering gamma}, \eqref{gammak xi xik vicini}, and \eqref{gammak caso Y 5} one has
   \[
   \S(\E_k)\cap I_{t^*/2}(\g)\ \subset\ \S(\E_k)\cap\bigcup_{i=0}^m B_{x^i,t^*/2+t^i/3} \ \subset \
   \S(\E_k)\cap \bigcup_{i=0}^m B_{x^i_k,t^i} \ =\ \g_k\,,
   \]
   so that the proof of \eqref{mean finale 0} is complete.

   \medskip

   \noindent {\it Step two}: We show that to each $\g\in\Gamma_T(\E)$ and $k\ge k_0$ one can associate $\g_k\in\Gamma_T(\E_k)$ in such a way that
   \begin{equation}
      \label{mean finale 1}
      \g_k\subset I_\de(\g)\,,\qquad \g\subset I_\de(\g_k)\,,\qquad \bd(\g_k)=\S_T(\E_k)\cap I_\de(\g)\,.
   \end{equation}
   Indeed, by \eqref{perbenino gamma} we can assume without loss of generality that \eqref{gammak covering gamma} holds with $\bd(\g)=\{x^0,x^m\}$ and $x^i\in\S_Y(\E)$ for $i=1,...,m-1$. In particular, by \eqref{covering 1}, if $i=1,...,m-1$, then by arguing as in the proof of \eqref{gammak caso Y 1} one finds
  \begin{equation}\label{gammak caso T 1}
  \mbox{$B_{x^i,t^i}\cap\S(\E)=B_{x^i,t^i}\cap\g$ is homeomorphic to $(0,1)$ for every $i=1,...,m-1$}\,.
  \end{equation}
  Similarly,
  \begin{equation}\label{gammak caso T 1.5}
  \begin{split}
  &\mbox{$B_{x^i,t^i}\cap\S(\E)$ is homeomorphic to $B\cap\s(T)$}\,,
  \\
  &\mbox{$B_{x^i,t^i}\cap\g$ is homeomorphic to $[0,1)$}\,,
  \end{split}
  \qquad\mbox{if $i=0,m$}\,.
  \end{equation}
  Since $\g$ is homeomorphic to $[0,1]$, by \eqref{gammak covering gamma}, \eqref{gammak caso T 1}, and \eqref{gammak caso T 1.5} we can prove that, up to a relabeling in the index $i$, and up to possibly discard some balls $B_{x^i,t^i}$, one has
  \begin{equation}\label{gammak caso T 2}
  B_{x^i,t^i}\cap B_{x^{i+1},t^{i+1}}\cap\g\ne\emptyset\,,\qquad\forall i=0,...,m-1\,,
  \end{equation}
  from which we deduce, by arguing as in the previous case, that
  \begin{equation}
     \label{gammak caso T 3}
     B_{x^i_k,t^i}\cap B_{x^{i+1}_k,t^{i+1}}\cap\S(\E_k)\ne\emptyset\,,\qquad\forall i=0,...,m-1\,.
  \end{equation}
  By exploiting again \eqref{covering 1k} we thus find $\g_k\in\Gamma_T(\E_k)$ such that $\bd(\g_k)=\{x^0_k,x^m_k\}$ and
   \begin{equation}
     \label{gammak caso T 4}
     \mbox{$B_{x^i_k,t^i}\cap\S(\E_k)=B_{x^i_k,t^i}\cap\g_k$ is homeomorphic to $(0,1)$ for every $i=1,...,m-1$}\,.
   \end{equation}
   \begin{equation}\label{gammak caso T 4.5}
  \mbox{$B_{x^i_k,t^i}\cap\g_k$ is homeomorphic to $[0,1)$ if $i=0,m$}\,.
  \end{equation}
  The inclusion $\g\subset I_\de(\g_k)$ follows by \eqref{gammak covering gamma}, \eqref{gammak xi xik vicini} and $x^i_k\in\g_k$. By \eqref{gammak caso T 4} and \eqref{gammak caso T 4.5}, and by arguing as in the previous step, one finds
  \[
  \g_k=\bigcup_{i=0}^m\g_k\cap B_{x^i_k,t^i}\,,
  \]
  which in particular entails $\g_k\subset I_\de(\g)$ thanks to \eqref{gammak xi xik vicini}. By \eqref{k0 basic} and \eqref{mean delta0}
  \[
  \S_T(\E_k)\cap I_\de(\g)\ \subset\ I_\de(\S_T(\E))\cap I_\de(\g)\ \subset \ I_{\de_0}(\bd(\g))\ =\ B_{x^0,\de_0}\cup B_{x^m,\de_0}\,.
  \]
  By \eqref{k0 basic ST}, $\S_T(\E_k)\cap B_{x^0,\de_0}$ consists of a single point, which must be $x^0_k$ thanks to \eqref{gammak xi xik vicini}. Thus
  $\S_T(\E_k)\cap I_\de(\g)=\{x^0_k,x^m_k\}=\bd(\g_k)$, and the proof of \eqref{mean finale 1} is complete.

   \medskip

   \noindent {\it Step three}: We prove \eqref{gammak hausdorf convergence} and \eqref{gammak Sigma decomposition}. Indeed, \eqref{mean finale 0}, \eqref{mean finale 1} and $t^*<\de$ immediately imply that $\hd(\g,\g_k)+\hd(\bd(\g),\bd(\g_k))<\de$ (with the convention that $\hd(\emptyset,\emptyset)=0$). By \eqref{k0 basic} and \eqref{mean finale 1} one has
   \[
   \S_T(\E_k)\ =\ \S_T(\E_k)\cap I_\de(\S_T(\E))\ \subset\ \S_T(\E_k)\cap\bigcup_{\g\in\Gamma_T(\E)} I_\de(\g)\ =\ \bigcup_{\g\in\Gamma_T(\E)} \bd(\g_k)\ \subset\ \S_T(\E_k)\,.
   \]
   To prove the first identity in \eqref{gammak Sigma decomposition}, we need to show that if $\tilde\g\in\Gamma(\E_k)$ then $\tilde\g=\g_k$ for some $\g\in\Gamma(\E)$. Indeed, if $\tilde\g\in\Gamma_T(\E_k)$, then $\bd(\tilde\g)\subset\S_T(\E_k)$ and thus $\tilde\g=\g_k$ for some $\g\in\Gamma_T(\E)$ thanks to the second identity in \eqref{gammak Sigma decomposition}; if, instead, $\tilde{\g}\in\Gamma_Y(\E_k)$, then, provided $k_0$ is large enough to entail $\S_Y(\E_k)\subset I_{t^*/2}(\S_Y(\E))$ for every $k\ge k_0$, one has
   \[
   \tilde{\g}\ \subset\  \Sigma_Y(\E_k)\cap I_{t^*/2}(\S_Y(\E))\ =\ \bigcup_{\g\in\Gamma_Y(\E)}\Sigma(\E_k)\cap I_{t^*/2}(\g)\ \subset\ \bigcup_{\g\in\Gamma_Y(\E)}\g_k\,,
   \]
   where the last inclusion is based on \eqref{mean finale 0}.
\end{proof}

From now on, $k_0$ will be always assumed large enough to have the correspondence $\g\mapsto\g_k$ established in Lemma \ref{lemma gammak} in place for every $k\ge k_0$. We recall that under this correspondence, $\g\in\Gamma_T(\E)$ if and only if $\g_k\in\Gamma_T(\E_k)$. Moreover, given $v\in\R^n$ we set $\R\,v=\{t\,v:t\in\R\}$ and $\R_+v=\{t\,v:t\ge 0\}$.

\begin{lemma}\label{lemma extfol gammak}
  If \eqref{hp} holds, then there exist $L$ (depending on $\E$ and $\Lambda$ only) and $k_0\in\N$ with the following property. For every $\g\in\Gamma(\E)$ and $k\ge k_0$ there exist vector fields $\tau,\nu^{(j)}:\g\to\SS^2$ and $\tau_k,\nu_k^{(j)}:\g_k\to\SS^2$ ($j=1,2,3$) such that
  \begin{eqnarray}\label{extfol tangenti gamma}
    T_x\g=\R\tau(x)\,,\qquad T_x\pa\E=\R\tau(x)+\sum_{j=1}^3\R_+\nu^{(j)}(x)\,,\qquad\forall x\in\INT(\g)\,,
    \\\label{extfol tangenti gammak}
    T_x\g_k=\R\tau_k(x)\,,\qquad T_x\pa\E_k=\R\tau_k(x)+\sum_{j=1}^3\R_+\nu_k^{(j)}(x)\,,\qquad\forall x\in\INT(\g_k)\,,
  \end{eqnarray}
  while
  \begin{equation}\label{extfol tau nu gamma}\left\{
    \begin{split}
    &|\tau(x)-\tau(y)|+|\nu^{(j)}(x)-\nu^{(j)}(y)|\le L\,|x-y|^\a\,,
    \\
    &|\nu^{(j)}(x)\cdot(x-y)|\le L\,|x-y|^{1+\a}\,,
    \end{split}\right .\qquad\forall x,y\in\g\,,j=1,2,3\,,
  \end{equation}
  \begin{equation}\label{extfol tau nu gammak}
    \left\{\begin{split}
    &|\tau_k(x)-\tau_k(y)|+|\nu_k^{(j)}(x)-\nu_k^{(j)}(y)|\le L\,|x-y|^\a\,,
    \\
    &|\nu_k^{(j)}(x)\cdot(x-y)|\le L\,|x-y|^{1+\a}\,,
    \end{split}\right .\qquad\forall x,y\in\g_k\,,j=1,2,3\,.
  \end{equation}
  Finally, set
  \[
  \l_k^{(1)}(x)=\nu_k^{(1)}(x)\,,\qquad \l_k^{(2)}(x)=\frac{\nu^{(3)}(x)-\nu^{(2)}(x)}{|\nu^{(3)}(x)-\nu^{(2)}(x)|}\,,\qquad x\in\g_k\,,
  \]
  so that $\{\l_k^{(1)}(x),\l_k^{(2)}(x)\}$ is an orthonormal basis of $\tau_k(x)^\perp$ for every $x\in\g_k$ with
  \begin{equation}\label{gk in C1alpha}
  \left\{
  \begin{split}
  &|\l_k^{(j)}(x)-\l_k^{(j)}(y)|\le L\,|x-y|^\a\,,
  \\
  &|\l_k^{(j)}(x)\cdot(y-x)|\le L\,|y-x|^{1+\a}\,,
  \end{split}\right .
  \qquad\forall x,y\in\g_k\,,j=1,2\,,
  \end{equation}
  that is $\|\g_k\|_{C^{1,\a}}\le L$.
\end{lemma}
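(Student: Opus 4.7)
The plan is to build the frames from the local $C^{1,\a}$-parametrizations provided by Lemma \ref{lemma covering sing sets}, reading off $\tau,\nu^{(j)}$ (resp. $\tau_k,\nu_k^{(j)}$) as push-forwards of the constant reference vectors of the model $Y$-cone, and then to patch the resulting local bounds via the finite covering $I(\g)$. Uniformity of the constant $L$ in $k$ is made possible by two facts: the charts $\Phi^i_r,\Phi^i_{k,r}$ have $C^{1,\a}$-norm at most $C_0$, and by Lemma \ref{lemma gammak} the curve $\g_k$ is covered by the same finite family of balls $B_{x^i_k,t^i}$, $i\in I(\g)$, as $\g$.

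Concretely, at every $x\in\INT(\g)$ the tangent cone $T_x\pa\E$ is an isometric copy of $Y$, so I take $\tau(x)$ to be a unit vector spanning $T_x\g$ and $\nu^{(j)}(x)\in\tau(x)^\perp$ ($j=1,2,3$) to be the three unit vectors lying on the three half-planes of $T_x\pa\E$. A global labeling is fixed by associating each $j$ once and for all to a surface $S_j\in\Ss_\S(\E)$ with $\g\subset\bd(S_j^*)$ and requiring $\R_+\nu^{(j)}(x)$ to be the half-line of $T_xS_j^*\cap\tau(x)^\perp$ pointing into $S_j^*$; the analogous recipe, with the corresponding surfaces in $\Ss(\E_k)$ and the charts $\Phi^i_{k,r}$ in place of $\Phi^i_r$, yields $\tau_k,\nu_k^{(j)}$ on $\g_k$, with labeling consistent across $k$ thanks to the Hausdorff convergence in Theorem \ref{thm convergence singular sets} and Lemma \ref{lemma gammak}. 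Inside a chart at scale $r=t^i$, $\tau$ and $\nu^{(j)}$ are the normalized push-forwards under $d\Phi^i_r$ of the constant reference vectors of $Y$ and $\s_Y(Y)$, so the bound $\|\Phi^i_r\|_{C^{1,\a}}+\|(\Phi^i_r)^{-1}\|_{C^{1,\a}}\le C_0$ yields the first line of \eqref{extfol tau nu gamma} with some $L_0=L_0(C_0)$, while the standard $C^{1,\a}$-curve identity $|(y-x)-(y-x)\cdot\tau(x)\,\tau(x)|\le L_0|y-x|^{1+\a}$, combined with $\nu^{(j)}\perp T_x\g$, yields the second line. Pairs at distance at least $t^*/6=\min_i t^i/6$ are handled by inflating $L$ using $\diam(\pa\E)<\infty$. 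The identical argument for $\g_k$ rests on the analogous uniform bound on $\Phi^i_{k,r}$; since the finite cover has cardinality $|I(\g)|$ independent of $k$, one can take the same $L$ in both \eqref{extfol tau nu gamma} and \eqref{extfol tau nu gammak}.

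Finally, since in the reference $Y$-cone the three unit vectors satisfy $\nu^{(1)}+\nu^{(2)}+\nu^{(3)}=0$, the difference $\nu_k^{(3)}-\nu_k^{(2)}$ is orthogonal to $\nu_k^{(1)}=\l_k^{(1)}$ and has norm $\sqrt{3}$; hence $\{\l_k^{(1)},\l_k^{(2)}\}$ is orthonormal in $\tau_k(x)^\perp$, and the first inequality in \eqref{gk in C1alpha} follows from \eqref{extfol tau nu gammak} thanks to the uniform lower bound $\sqrt{3}$ on the denominator. The second inequality is obtained by decomposing $y-x=(y-x)\cdot\tau_k(x)\,\tau_k(x)+w$ with $|w|\le L|y-x|^{1+\a}$ (a consequence of the first line of \eqref{extfol tau nu gammak}) and invoking $\l_k^{(j)}(x)\perp\tau_k(x)$ to get $\l_k^{(j)}(x)\cdot(y-x)=\l_k^{(j)}(x)\cdot w$. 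The main obstacle throughout is controlling $L$ uniformly in $k$, which is exactly what the $C_0$-bound from Theorem \ref{thm david}, propagated through Lemmas \ref{lemma covering sing sets} and \ref{lemma gammak}, delivers.
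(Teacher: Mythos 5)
Your proposal is correct and follows essentially the same route as the paper: the frames are defined intrinsically as the tangent and the three conormals along $\g$ (resp.\ $\g_k$), the H\"older and second-order estimates are read off from the uniformly $C_0$-bounded charts $\Phi^i_r$, $\Phi^i_{k,r}$ of Lemma \ref{lemma covering sing sets} on the $k$-independent finite covering, distant pairs are handled by inflating $L$ via the lower bound on the covering radii, and the claims on $\l_k^{(1)},\l_k^{(2)}$ follow from the $120$-degree geometry ($\nu_k^{(1)}+\nu_k^{(2)}+\nu_k^{(3)}=0$), which you actually spell out in more detail than the paper does. The only cosmetic remark is that in the charts $\nu^{(j)}$ is the normalized projection of $\nabla\Phi^i_r[\nu_0^{(j)}]$ onto $\tau(x)^\perp$ rather than the bare normalized push-forward, as your own requirement $\nu^{(j)}(x)\in\tau(x)^\perp$ already dictates and as the paper writes explicitly.
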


\begin{proof}
  The inclusion $\INT(\g)\subset\S_Y(\E)$ implies that $T_x\pa\E$ is isometric to $Y$ for every $x\in\INT(\g)$, so that the existence of vector fields such that \eqref{extfol tangenti gamma} and \eqref{extfol tangenti gammak} hold is immediate. Note that the set of the three vectors $\{\nu^{(j)}(x)\}_{j=1}^3$ is uniquely determined by \eqref{extfol tangenti gamma} at every $x\in\INT(\g)$, as these three vectors must be the inner conormals to the three surfaces in $\Ss(\E)$ meeting along $\g$, while, for each $x\in\INT(\g)$, \eqref{extfol tangenti gamma} determines $\tau(x)$ only modulo multiplication by $\pm 1$.

  Let us now cover $\g$ by the family of balls $\{B_{x^i,t^i/3}\}_{i=0}^m$ considered in the proof of Lemma \ref{lemma gammak} in correspondence, say, to the value $\de=\de_0/2$. (In particular, if $\g\in\Gamma_Y(\E)$, then \eqref{gammak caso Y 1} and \eqref{gammak caso Y 2} hold, while if $\g\in\Gamma_T(\E)$, then \eqref{gammak caso T 1}, \eqref{gammak caso T 1.5} and \eqref{gammak caso T 2} hold.) Let $\tau_0$ and $\nu_0^{(j)}$ be unit vectors such that our reference cone $Y$ takes the form
  \[
  Y=\R\,\tau_0+\sum_{j=1}^3\,\R_+\,\nu_0^{(j)}=\bigcup_{j=1}^3\Pi^{(j)}\,,
  \]
  where $\Pi^{(j)}=\R\tau_0+\R_+\nu_0^{(j)}$ is an half-plane. By applying \eqref{covering 1} with $r=t^i$, in the case $\g\in\Gamma_Y(\E)$ or $\g\in\Gamma_T(\E)$ with $i=1,...,m-1$, we find an open interval $J^i$ containing $0$ such that $\g\cap B_{x^i,t^i}=\{\Phi^i(s\,\tau_0):s\in J^i\}$ (where $\Phi^i$ stands for $\Phi^i_r$ with $r=t^i$); while in the case $\g\in\Gamma_T(\E)$ and $i\in\{0,m\}$, we find an half-open/half-closed interval $J^i$ containing $0$ as an end-point, such that $\g\cap B_{x^i,t^i}=\{\Phi^i(s\,\tau_0):s\in J^i\}$. As a consequence
  \begin{equation}
    \label{extfol 1}
      \tau(x)=\frac{\nabla\Phi^i(s\tau_0)[\tau_0]}{|\nabla\Phi^i(s\tau_0)[\tau_0]|}\,,\qquad x=\Phi^i(s\,\tau_0)\,,\quad s\in J^i\,,
  \end{equation}
  defines a unit tangent vector field to $\g\cap B_{x^i,t^i}$. Note that $|\nabla\Phi^i(s\tau_0)[\tau_0]|>0$ for every $s\in J^i$ as $\Phi^i$ is a diffeomorphism, and that this procedure defines $\tau$ as a continuous vector field on the whole $\gamma$ thanks to \eqref{gammak caso Y 2} and \eqref{gammak caso T 2} up to possibly switching the sign in \eqref{extfol 1}. Now let $x,y\in\g$, so that $x\in\g\cap B_{x^i,t^i/3}$ for some $i$. If $y\in\g\setminus B_{x^i,t^i}$, then $|x-y|\ge 2t^i/3\ge 2t^*/3$ for $t^*$ defined as in \eqref{gammak xi xik vicini}, and thus $|\tau(x)-\tau(y)|\le C\,|x-y|^\a$ for a constant depending on $\a$ and $t^*$ only. If, instead, $y\in B_{x^i,t^i}\cap\g$, then there exist $s,t\in J^i$ such that $x=\Phi^i(s\,\tau_0)$ and $y=\Phi^i(t\,\tau_0)$, and by exploiting $\|\Phi^i\|_{C^{0,\a}}\le C_0$ and $\Lip\,(\Phi^i)^{-1}\le C_0$ we obtain from \eqref{extfol 1} that $|\tau(x)-\tau(y)|\le C\,|s-t|^\a\le C\,|x-y|^\a$ for $C$ depending on $C_0$ only. Since $\g_k$ is covered by the balls $\{B_{x^i_k,2t^i/3}\}_{i=0}^m$, by \eqref{covering 1k} and by an entirely similar argument we come to prove the existence of vector fields $\tau$ and $\tau_k$ as in \eqref{extfol tau nu gamma} and \eqref{extfol tau nu gammak}.

  We now show that the vector fields $\nu^{(j)}$ and $\nu^{(j)}_k$ satisfy \eqref{extfol tau nu gamma} and \eqref{extfol tau nu gammak} respectively. Clearly, it suffices to discuss this for $\nu^{(j)}$. Moreover, we shall only detail the case $\g\in\Gamma_Y(\E)$, as giving details on the case $\g\in\Gamma_T(\E)$ would require the introduction of additional notation while being entirely analogous. This said, if $\g\in\Gamma_Y(\E)$, then by \eqref{covering 1} there exists $\{S^j\}_{j=1}^3\subset\Ss(\E)$ such that $\Phi^i(B_{0,2t^i}\cap \Pi^j)\cap B_{x^i,t^i}=B_{x^i,t^i}\cap S^j$ for each $j=1,2,3$. In particular, since $\nu^{(j)}_0$ points inward $\Pi^j$ and $T_xS^j=\R\tau(x)+\R_+\nu^{(j)}(x)$ for every $x\in S^j\cap\g$, we see that
  \begin{equation}
    \label{extfol 2}
      \nabla\Phi^i(s\tau_0)[\nu^{(j)}_0]\in\R\tau(x)+\R_+\nu^{(j)}(x)\,,\qquad x=\Phi^i(s\tau_0)\,,\quad s\in J^i\,.
  \end{equation}
  Since $\nabla\Phi^i(s\tau_0)[\tau_0]$ is parallel to $\tau(x)$, $\tau(x)$ and $\nu^{(j)}(x)$ are orthogonal, and $\nabla\Phi^i(s\tau_0)$ is invertible, it must actually be
  \begin{equation}
    \label{extfol 3}
  \nabla\Phi^i(s\tau_0)[\nu^{(j)}_0]\cdot\nu^{(j)}(x)> 0\,,\qquad x=\Phi^i(s\tau_0)\,,\quad s\in J^i\,.
  \end{equation}
  By \eqref{extfol 2} and \eqref{extfol 3} we find
  \[
  \nu^{(j)}(x)=\frac{\nabla\Phi^i(s\tau_0)[\nu^{(j)}_0]-(\nabla\Phi^i(s\tau_0)[\nu^{(j)}_0]\cdot\tau(x))\tau(x)}
  {|\nabla\Phi^i(s\tau_0)[\nu^{(j)}_0]-(\nabla\Phi^i(s\tau_0)[\nu^{(j)}_0]\cdot\tau(x))\tau(x)|}\,,\qquad x=\Phi^i(s\tau_0)\,,\quad s\in J^i\,.
  \]
  By exploiting again the fact that $\{B_{x^i,t^i/3}\}_{i=0}^m$ covers $\g$ we conclude as in the previous case that $|\nu^{(j)}(x)-\nu^{(j)}(y)|\le C|x-y|^\a$ for every $x,y\in\g$. Again by the covering property, we are left to show that
  \begin{equation}
    \label{extfol 4}
      |\nu^{(j)}(x)\cdot(x-y)|\le C\,|s-t|^{1+\a}\,,\qquad x=\Phi^i(s\tau_0)\,,y=\Phi^i(t\tau_0)\,,\quad s,t\in J^i\,.
  \end{equation}
  Indeed we easily see that
  \[
  |\Phi^i(s\tau_0)-\Phi^i(t\tau_0)-\nabla\Phi^i(t\tau_0)[\tau_0](s-t)|\le C\,|s-t|^{1+\a}\,,\qquad\forall s,t\in J^i\,,
  \]
  while $\nabla\Phi^i(s\tau_0)[\tau_0]$ is parallel to $\tau(x)$ and $\tau(x)$ and $\nu^{(j)}(x)$ are orthogonal, so that
  \[
  \nu^{(j)}(x)\cdot(x-y)=\nu^{(j)}(x)\cdot\big(\Phi^i(s\tau_0)-\Phi^i(t\tau_0)-\nabla\Phi^i(t\tau_0)[\tau_0](s-t)\big)\,;
  \]
  by combining these last two fact, we prove \eqref{extfol 4}. Finally, the assertions about the vector fields $\l_k^{(j)}$ follow by similar considerations.
\end{proof}

%

\begin{lemma}\label{lemma fkstar Y}
  If \eqref{hp} holds then for every $k\ge k_0$ and $\g\in\Gamma_Y(\E)$ there exists a $C^{1,\a}$-diffeomorphism $f_k$ between $\g$ and $\g_k$ with
  \[
  \lim_{k\to\infty}\|f_k-\Id\|_{C^1(\g)}=0\,,\qquad \|f_k\|_{C^{1,\a}(\g)}\le C\,,
  \]
  \[
  (f_k-\Id)\cdot\tau=0\,,\qquad\mbox{on $\g$}\,.
  \]
\end{lemma}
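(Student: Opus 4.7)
The plan is to define $f_k$ by a \emph{normal-slice} construction: for each $x\in\gamma$, let $f_k(x)$ be the unique point of $\gamma_k$ lying in the affine plane $x+\tau(x)^\perp$ close to $x$. I would fix an arclength parametrization $\phi:\R/L\Z\to\gamma$ (well defined since $\gamma\in\Gamma_Y(\E)$ forces $\gamma$ to be a simple closed curve) of class $C^{2,1}$ by Remark~\ref{remark nuS}, and arclength parametrizations $\phi_k:\R/L_k\Z\to\gamma_k$ of class $C^{1,\a}$ with bounds uniform in $k$ via Lemma~\ref{lemma extfol gammak}. The key scalar object is
\[
F_k(s,t)=\bigl(\phi_k(t)-\phi(s)\bigr)\cdot\phi'(s),
\]
for which the goal is to solve $F_k(s,t)=0$ to find $t=t_k(s)$, and then set $f_k(\phi(s)):=\phi_k(t_k(s))$. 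Note that $\partial_tF_k(s,t)=\phi_k'(t)\cdot\phi'(s)=\tau_k(\phi_k(t))\cdot\tau(\phi(s))$, while $\partial_sF_k(s,t)=-1+(\phi_k(t)-\phi(s))\cdot\phi''(s)$ makes sense because $\phi\in C^{2,1}$.

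First I would establish existence and uniqueness of $t_k(s)$. The Hausdorff convergence of Lemma~\ref{lemma gammak} produces a parameter $t^\circ_k(s)$ with $|\phi_k(t^\circ_k(s))-\phi(s)|\to 0$ uniformly in $s$. Combining the uniform $C^{0,\a}$-bound on $\{\tau_k\}$ from Lemma~\ref{lemma extfol gammak} with the pointwise tangent-cone convergence in Theorem~\ref{thm convergence singular sets}~\eqref{convergence tangent cones}, one obtains $\tau_k(\phi_k(t^\circ_k(s)))\to\tau(\phi(s))$ uniformly in $s$ (after synchronizing once and for all the orientation of $\phi_k$ on the connected curve $\gamma_k$ with that of $\phi$). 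Consequently $\partial_tF_k\ge 1/2$ on a fixed-width $t$-neighborhood of $t^\circ_k(s)$ for $k$ large, and the intermediate value theorem together with strict monotonicity of $F_k(s,\cdot)$ produces a unique zero $t=t_k(s)$ in that neighborhood. By construction $F_k(s,t_k(s))=0$ is exactly $(f_k-\Id)\cdot\tau=0$ on $\gamma$.

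Next I would address regularity and bijectivity. Because $\phi\in C^{2,1}$ and $\phi_k\in C^{1,\a}$ with uniform bounds, $F_k$ is jointly $C^{1,\a}$ in $(s,t)$ with uniform bounds; combined with $|\partial_tF_k|\ge 1/2$, the standard $C^{1,\a}$ implicit function theorem gives $\|t_k\|_{C^{1,\a}(\R/L\Z)}\le C$, hence $\|f_k\|_{C^{1,\a}(\gamma)}\le C_0$. Injectivity is the uniqueness of $t_k(s)$; $f_k(\gamma)$ is a nonempty relatively open and compact subset of the connected curve $\gamma_k$, hence equals $\gamma_k$; and the inverse is $C^{1,\a}$ since $t_k'$ stays bounded away from zero. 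For the $C^1$-convergence, $\|f_k-\Id\|_{C^0(\gamma)}\le 2\,\hd(\gamma,\gamma_k)\to 0$ by Lemma~\ref{lemma gammak}; differentiating $F_k(s,t_k(s))=0$ yields $t_k'(s)\to 1$ uniformly, and writing $\nabla^\gamma f_k(\phi(s))[\phi'(s)]=t_k'(s)\,\tau_k(f_k(\phi(s)))$ shows that the right-hand side tends uniformly to $\tau(\phi(s))$.

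The main obstacle I anticipate is precisely the upgrade of the pointwise convergence $\tau_k(f_k(x))\to\tau(x)$ (granted by Theorem~\ref{thm convergence singular sets} through convergence of tangent cones at points of equal density) to a \emph{uniform} convergence on $\gamma$: this requires an Arzel\`a--Ascoli argument based on the uniform H\"older bound \eqref{extfol tau nu gammak} on $\{\tau_k\}$, together with a preliminary orientation-synchronization step to remove the $\pm$ ambiguity inherent in the defining relation \eqref{extfol tangenti gammak} of $\tau_k$. Once this uniform convergence is secured, the existence, regularity, and convergence portions of the argument all become routine consequences of the implicit function theorem.
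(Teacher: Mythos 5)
Your proposal is essentially the paper's argument, realized from the opposite end: the paper works with $g_k:=p_\g|_{\g_k}:\g_k\to\g$ (the nearest-point projection restricted to $\g_k$), establishes uniform $C^{1,\alpha}$ bounds and $J^{\g_k}g_k\ge 1/2$, proves the key uniform tangent convergence $\|\tau\circ p_\g-\tau_k\|_{C^0(\g_k)}\to 0$ by a contradiction/compactness argument, and then invokes the uniform inverse function theorem \cite[Theorem 2.1]{CiLeMaIC1} to obtain $f_k=g_k^{-1}$; you instead build $f_k:\g\to\g_k$ directly from the slice equation $F_k(s,t)=0$ via the implicit function theorem and handle the tangent convergence by Arzel\`a--Ascoli plus orientation synchronization, which is the same compactness idea. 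The underlying geometry ($f_k(x)$ is the point of $\g_k$ on the normal plane $x+\tau(x)^\perp$, so normality is automatic) and the needed ingredients (Lemmas \ref{lemma gammak}, \ref{lemma extfol gammak}, Theorem \ref{thm convergence singular sets}, $C^{2,1}$ regularity of $\g$) are identical.
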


\begin{proof}
   Let $p_\g$ denote the projection of $\R^3$ over $\g$, and let $\de_0>0$ be such $p_\g\in C^2(I_{\de_0}(\g))$. For $k\ge k_0$, we have $\g_k\subset I_{\de_0}(\g)$. We claim that
   \begin{equation}
     \label{uniforme tangenti}
   \lim_{k\to\infty}\|\tau\circ p_\g-\tau_k\|_{C^0(\g_k)}=0\,.
   \end{equation}
   Should this not be the case, then, up to extracting subsequences and thanks to Theorem \ref{thm convergence singular sets} and to $\hd(\g_k,\g)\to 0$, we could find $\e>0$, $y_k\in\g_k$, and $y_0\in\g$ such that
   \[
   \lim_{k\to\infty}y_k=y_0\,,\qquad   \lim_{k\to\infty}\tau_k(y_k)=\tau(y_0) \qquad \inf_{k\ge k_0}|\tau(p_\g(y_k))-\tau_k(y_k)|\ge\e\,.
   \]
   Clearly $p_\g(y_k)\to y_0$, and hence $\tau(p_\g(y_k))\to \tau(y_0)$ thanks to \eqref{extfol tau nu gamma}. We thus obtain a contradiction and prove \eqref{uniforme tangenti}. Now, by Lemma \ref{lemma extfol gammak} we have
   \begin{equation}
     \label{stime belle}
     \begin{split}
    \hd_{x,r}(\g,x+\R\tau(x))\le C\,r^{1+\a}\,,
    \\
    \hd_{y,r}(\g_k,y+\R\tau_k(y))\le C\,r^{1+\a}\,,
  \end{split}\qquad\forall x\in\g\,,y\in\g_k\,,r>0\,.
   \end{equation}
   Combining \eqref{uniforme tangenti} and \eqref{stime belle} we see that the restriction $g_k$ of $p_\g$ to $\g_k$ is an invertible map $g_k:\g_k\to\g$. By exploiting the fact that $g_k$ is the projection of $\g_k$ onto $\g$ one finds that
   \begin{equation}
     \label{again by}
        \nabla^{\g_k}g_k(y)=\Big(\tau(g_k(y))\cdot\tau_k(y)\Big)\,\tau(g_k(y))\otimes\tau_k(y)\,,\qquad\forall y\in\g_k\,.
   \end{equation}
   Since, trivially, $|g_k(y)-g_k(y')|\le |y-y'|$ for every $y,y'\in\g_k$, by exploiting this formula together with \eqref{extfol tau nu gamma} and \eqref{extfol tau nu gammak}, we conclude that
   \begin{equation}
     \label{questa1}
       \sup_{k\ge k_0} \|g_k\|_{C^{1,\a}(\g_k)}\le C\,.
   \end{equation}
   We also notice that, again by \eqref{again by}
   \begin{equation}
     \label{questa2}
   J^{\g_k}g_k=|\tau(g_k(y))\cdot\tau_k(y)|\ge \frac{1}{2}\,,\qquad\mbox{on $\g_k$ for $k\ge k_0$}\,,
   \end{equation}
   while $\|g_k-\Id\|_{C^0(\g_k)}\le\hd(\g,\g_k)\to 0$. By combining this last fact with \eqref{questa1} and \eqref{questa2} we are in the position to apply \cite[Theorem 2.1]{CiLeMaIC1} and deduce that $g_k$ is a $C^{1,\a}$-diffeomorphism between $\g_k$ and $\g$ with $\|f_k\|_{C^{1,\a}(\g)}\le C$. In order to check that $\|f_k-\Id\|_{C^1(\g)}\to 0$, it is enough to notice that, again by \eqref{uniforme tangenti},
   \[
   \|\nabla^{\g_k}g_k-\tau_k\otimes\tau_k\|_{C^0(\g_k)}\le \|(\tau\circ g_k)\cdot\tau_k-1\|_{C^0(\g_k)}\to 0\,.
   \]
\end{proof}

\begin{lemma}
  \label{lemma fkstar T part two}
  If \eqref{hp} holds then there exist $\mu_*,C_*>0$ with the following property. If $\g\in\Gamma_T(\E)$, $\mu<\mu_*$, and $k\ge k_0$ (for some $k_0$ depending also on $\mu$), then there exists a $C^{1,\a}$-diffeomorphism between $\g$ and $\g_k$ with $f_k(\bd(\g))=\bd(\g_k)$ such that
  \[
  \lim_{k\to\infty}\|f_k-\Id\|_{C^1(\g)}=0\,,\qquad \|f_k\|_{C^{1,\a}(\g)}\le C_*\,,
  \]
  \[
  (f_k-\Id)\cdot\tau=0\,,\qquad\mbox{on $[\g]_{\mu}$}\,,
  \]
  \[
  \|(f_k-\Id)\cdot\tau\|_{C^1(\g)}\le\frac{C_*}\mu\,\|f_k-\Id\|_{C^0(\bd(\g))}\,.
  \]
\end{lemma}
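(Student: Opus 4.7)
\medskip
\noindent
\textbf{Proof plan.} The strategy is to construct $f_k$ as the composition of the arclength parametrization of $\gamma$ with a scalar reparametrization $\tau_k\colon[0,L]\to[0,L_k]$ which coincides on $[\gamma]_\mu$ with the ``normal'' reparametrization coming from the projection argument of Lemma \ref{lemma fkstar Y}, and is modified by a linear interpolation near the endpoints so to satisfy $\tau_k(0)=0$ and $\tau_k(L)=L_k$. The tangential displacement of $f_k$ is thus supported in $I_\mu(\bd(\g))$ and its size is driven by the tangential mismatch at the endpoints, which is bounded by $\|f_k-\Id\|_{C^0(\bd(\g))}$.

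\medskip
\noindent
\emph{Step one (arclength setup).} By Lemma \ref{lemma gammak} and up to a choice of compatible orientations, fix arclength parametrizations $\g\colon[0,L]\to\R^3$ and $\g_k\colon[0,L_k]\to\R^3$ with $\g(0)=p_1$, $\g(L)=p_2$, $\g_k(0)=p_1^k$, $\g_k(L_k)=p_2^k$, where $\bd(\g)=\{p_1,p_2\}$, $\bd(\g_k)=\{p_1^k,p_2^k\}$ and $p_i^k\to p_i$. Lemma \ref{lemma extfol gammak} gives uniform $C^{1,\a}$-bounds on $\g$ and $\g_k$; in particular $L_k\to L$.

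\medskip
\noindent
\emph{Step two (interior normal map).} As in the proof of Lemma \ref{lemma fkstar Y}, the nearest-point projection onto $\g$ gives, for $k$ large enough depending on $\mu$, a $C^{1,\a}$-diffeomorphism $\tilde f_k$ between $[\g]_{\mu/4}$ and an open arc of $\g_k$ satisfying $(\tilde f_k(x)-x)\cdot\tau(x)=0$ for all $x\in[\g]_{\mu/4}$, with $\|\tilde f_k-\Id\|_{C^1}\to 0$ and $\|\tilde f_k\|_{C^{1,\a}}\le C$. Writing $\tilde f_k(\g(s))=\g_k(\tilde\tau_k(s))$ defines a $C^{1,\a}$-diffeomorphism $\tilde\tau_k$ on an interval slightly containing $[\mu/4,L-\mu/4]$, with $\|\tilde\tau_k-\Id\|_{C^1}\to 0$ uniformly.

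\medskip
\noindent
\emph{Step three (blending).} Choose $\mu_*<L/4$ and, for $\mu<\mu_*$, a smooth cutoff $\chi_\mu\colon[0,L]\to[0,1]$ with $\chi_\mu\equiv 1$ on $[\mu,L-\mu]$, $\chi_\mu\equiv 0$ on $[0,\mu/2]\cup[L-\mu/2,L]$ and $|\chi_\mu^{(j)}|\le C/\mu^j$ for $j=1,2$. Let $A_k,B_k\colon[0,L]\to\R$ be the affine functions with $A_k(0)=0$, $A_k(\mu)=\tilde\tau_k(\mu)$ and $B_k(L)=L_k$, $B_k(L-\mu)=\tilde\tau_k(L-\mu)$; set
\[
\tau_k(s)=\chi_\mu(s)\,\tilde\tau_k(s)+(1-\chi_\mu(s))\bigl(A_k(s)\,\mathbf{1}_{[0,L/2]}(s)+B_k(s)\,\mathbf{1}_{[L/2,L]}(s)\bigr),
\]
and $f_k(\g(s))=\g_k(\tau_k(s))$. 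Observe that $\tau_k=\tilde\tau_k$ on $[\mu,L-\mu]$, $\tau_k(0)=0$, $\tau_k(L)=L_k$, and $\tau_k$ is $C^{1,\a}$; for $k$ sufficiently large (depending on $\mu$), the smallness of $|\tilde\tau_k-A_k|$ and $|\tilde\tau_k-B_k|$ compared to $\mu$ (a consequence of $\tilde\tau_k\to\Id$ in $C^1$) guarantees $\tau_k'>0$, so $f_k$ is a $C^{1,\a}$-diffeomorphism matching $\bd(\g)$ to $\bd(\g_k)$.

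\medskip
\noindent
\emph{Step four (verification).} The bounds $\|f_k\|_{C^{1,\a}(\g)}\le C_*$ and $\|f_k-\Id\|_{C^1}\to 0$ follow from the uniform $C^{1,\a}$-control of $\g,\g_k$ and of $\tau_k$: the potentially dangerous terms $\chi_\mu'(\tilde\tau_k-A_k)$ and their $C^{0,\a}$-seminorms are made arbitrarily small by enlarging $k_0(\mu)$, because $\|\tilde\tau_k-A_k\|_{C^1([0,\mu])}$ is $o_k(1)$ while $\|\chi_\mu\|_{C^{1,\a}}$ is fixed once $\mu$ is fixed. On $[\g]_\mu$ we have $\tau_k=\tilde\tau_k$, hence $f_k=\tilde f_k$ and $(f_k-\Id)\cdot\tau\equiv 0$ there. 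For the quantitative tangential bound: on $[0,\mu]$ one decomposes
\[
(f_k(\g(s))-\g(s))\cdot\tau(\g(s))=\bigl(\g_k(\tau_k(s))-\g_k(\tilde\tau_k(s))\bigr)\cdot\tau(\g(s))+\underbrace{\bigl(\g_k(\tilde\tau_k(s))-\g(s)\bigr)\cdot\tau(\g(s))}_{=0\text{ by normality}}
\]
(with a harmless extension of $\tilde\tau_k$ to $[0,\mu]$ via the projection of $p_1$ onto $\g_k$). The first summand equals $\tau_k-\tilde\tau_k$ plus terms of order $|\tau_k-\tilde\tau_k|\cdot|\tau(\g(s))-\tau_k(\g_k(\tilde\tau_k(s)))|$, the latter factor being $o_k(1)$ by Lemma \ref{lemma extfol gammak}. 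Finally $|\tilde\tau_k(0)|$ equals (to leading order) the tangential component of $p_1^k-p_1$, hence $|\tilde\tau_k-A_k|\le C|p_1^k-p_1|$ on $[0,\mu]$ and, by the cutoff estimates, $\|(f_k-\Id)\cdot\tau\|_{C^1}\le (C_*/\mu)\|f_k-\Id\|_{C^0(\bd(\g))}$.

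\medskip
\noindent
The main obstacle is the quantitative control in Step four: one needs to identify the endpoint value of the extended $\tilde\tau_k$ with the tangential component of $p_i^k-p_i$ up to quadratic errors (using the $C^{1,\a}$-regularity of $\g,\g_k$ and the orthogonality condition defining $\tilde\tau_k$), and then to track how the cutoff converts a $C^0$ boundary mismatch of size $\|f_k-\Id\|_{C^0(\bd(\g))}$ into a $C^1$ interior bound with the expected $1/\mu$ factor, while still preserving the absolute $C^{1,\a}$-bound $C_*$ independent of $\mu$.
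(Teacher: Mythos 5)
Your overall architecture (interior normal map obtained from the projection argument of Lemma \ref{lemma fkstar Y}, plus a reparametrization of $\g_k$ near the endpoints driven by a cutoff at scale $\mu$) is reasonable, and it amounts to redoing by hand what the paper outsources: after constructing the normal diffeomorphism on $[\g]_\rho$ exactly as in Lemma \ref{lemma fkstar Y} and checking $\|\g_k\|_{C^{1,\a}}\le L$, $\hd(\g,\g_k)+\hd(\bd(\g),\bd(\g_k))\to 0$ and the convergence of tangents, the paper simply invokes \cite[Theorem 3.5]{CiLeMaIC1} to perform the endpoint gluing with the stated $C_*/\mu$ control. The genuine gap in your argument is in Step four, and it is caused by the choice of the affine interpolants $A_k,B_k$. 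On $[0,\mu]$ the tangential displacement of your map is, up to factors $1+o_k(1)$, equal to $(1-\chi_\mu)(A_k-\tilde\tau_k)$, and $A_k-\tilde\tau_k$ is \emph{not} controlled by the endpoint mismatch: since $A_k'-\tilde\tau_k'(s)=\tilde\tau_k(0)/\mu+\big(\mu^{-1}\int_0^\mu\tilde\tau_k'-\tilde\tau_k'(s)\big)$, the second term measures the deviation of $\tilde\tau_k$ from being affine on a window of length $\mu$, which is of the order of the curvature of $\g$ times the normal distance between $\g$ and $\g_k$ near the endpoint, and has nothing to do with $|p_1^k-p_1|$. Concretely, if $\bd(\g_k)=\bd(\g)$ but $\g_k$ is a nontrivial normal graph over $\g$ near $p_1$, then $\|f_k-\Id\|_{C^0(\bd(\g))}=0$ while your $\tau_k\ne\tilde\tau_k$ on $(0,\mu)$, so the inequality $\|(f_k-\Id)\cdot\tau\|_{C^1(\g)}\le(C_*/\mu)\|f_k-\Id\|_{C^0(\bd(\g))}$ fails for the map you construct; the step ``$|\tilde\tau_k(0)|\le C|p_1^k-p_1|$, hence $|\tilde\tau_k-A_k|\le C|p_1^k-p_1|$ on $[0,\mu]$'' is exactly where the argument breaks.

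The repair is to damp only the constant endpoint mismatch rather than interpolating the whole parameter affinely: set, near $s=0$, $\tau_k(s)=\tilde\tau_k(s)-(1-\chi_\mu(s))\,\tilde\tau_k(0)$ (and symmetrically at $s=L$). Then the extra tangential displacement is $(1-\chi_\mu)\,\tilde\tau_k(0)$ up to $1+o_k(1)$ factors, so both it and its derivative are bounded by $(C/\mu)|\tilde\tau_k(0)|\le (C/\mu)|p_1^k-p_1|$, using your (correct) identification of $\tilde\tau_k(0)$ with the tangential part of $p_1^k-p_1$; monotonicity of $\tau_k$ and the $\mu$-independent $C^{1,\a}$ bound then follow as in your Step three. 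You should also treat the ``harmless extension'' of $\tilde\tau_k$ up to $s=0$ more carefully: the normal plane to $\g$ at $\g(s)$ need not meet $\g_k$ for $s$ very close to $0$ (e.g.\ when $p_1^k$ lies on the inner side of that plane), so either use the nearest-point projection onto $\g_k$ there and carry the resulting (quadratically small) non-normality error, or, as in the paper, keep the normal map only on $[\g]_\rho$ with $\rho\ll\mu$ and let the gluing construction of \cite[Theorem 3.5]{CiLeMaIC1} handle the boundary strip.
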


\begin{proof}
  Let $\rho_0>0$ be such that $[\g]_\rho\ne\emptyset$ for $\rho<\rho_0$. We claim the existence of $L>0$ with the following property: for every $\rho<\rho_0$ and $k\ge k_0$ (with $k_0$ depending also on $\rho$), there exists a $C^{1,\a}$-diffeomorphism $f_k$ between $[\g]_{\rho}$ and $f_k([\g]_\rho)$ such that
  \begin{equation}
    \label{la prima}
      \begin{split}
        \lim_{k\to\infty}\|f_k-\Id\|_{C^1([\g]_\rho)}=0\,,\qquad \|f_k\|_{C^{1,\a}([\g]_\rho)}\le L\,,
        \\
        [\g_k]_{3\,\rho}\subset f_k([\g]_\rho)\subset\g_k\,,\qquad (f_k-\Id)\cdot\tau=0\,,\qquad\mbox{on $[\g]_{\rho}$}\,.
      \end{split}
  \end{equation}
  Indeed, by the same argument as in the previous proof we construct a diffeomorphism $f_k$ from $[\g]_\rho$ to $\g_k\cap N_{\de_0}([\g]_{\rho})$ such that \eqref{la prima} holds with $(f_k-\Id)\cdot\tau=0$ on $[\g]_{\rho}$. We are thus left to prove that if $x\in \g_k$ with $\dist(x,\bd(\g_k))\ge 3\rho$, then $\dist(p_\g(x),\bd(\g))\ge\rho$. Indeed, let $x^0\in\bd(\g)$ be such that $\dist(p_\g(x),\bd(\g))=|x^0-p_\g(x)|$, and let $x_0^k\in\bd(\g_k)$ be such that $|x^0_k-x^0|\le\hd(\bd(\g),\bd(\g_k))$. Then we have,
  \begin{eqnarray*}
  3\rho&\le&\dist(x,\bd(\g_k))\le |x-x_k^0|\le |x-p_\g(x)|+|p_\g(x)-x^0|+|x^0-x_k^0|
  \\
  &\le&\hd(\g,\g_k)+\hd(\bd(\g),\bd(\g_k))+\dist(p_\g(x),\bd(\g))\,,
  \end{eqnarray*}
  so that $\dist(p_\g(x),\bd(\g))\ge\rho$ provided $\hd(\g,\g_k)+\hd(\bd(\g),\bd(\g_k))\le 2\rho$ for $k\ge k_0$.

  Now let $\mu_*$ and $C_*$ be the positive constants associated by \cite[Theorem 3.5]{CiLeMaIC1} to $\g\in\Gamma(\E)$, $\a$ as in Theorem \ref{thm david}, and $L$ redefined to be the maximum between the constant appearing in Lemma \ref{lemma extfol gammak} and the constant appearing in \eqref{la prima}. By taking into account \eqref{la prima} and
  \[
    \|\g_k\|_{C^{1,\a}}\le L\,,\qquad\lim_{k\to\infty}\hd(\g_k,\g)+\hd(\bd(\g_k),\bd(\g))+\|\tau\circ p_\g-\tau_k\|_{C^0(\bd(\g_k))}=0\,,
  \]
  (which follows from Theorem \ref{thm convergence singular sets}, Lemma \ref{lemma gammak}, and Lemma \ref{lemma extfol gammak}) we are in the position to apply \cite[Theorem 3.5]{CiLeMaIC1} to complete the proof of the lemma.
\end{proof}

By juxtaposing the maps $f_k$ defined in Lemma \ref{lemma fkstar Y} and Lemma \ref{lemma fkstar T part two} we define (for $k\ge k_0$ with $k_0$ corresponding to a fixed value of $\mu<\mu_*$) a homeomorphism
\[
f_k:\S(\E)\to\S(\E_k)
\]
such that $f_k(\S_T(\E))=\S_T(\E_k)$, $f_k(\S_Y(\E))=\S_Y(\E_k)$, and
\[
\lim_{k\to\infty}\sup_{\g\in\Gamma(\E)}\|f_k-\Id\|_{C^1(\g)}=0\,,\qquad
\sup_{k\ge k_0}\sup_{\g\in\Gamma(\E)}\|f_k\|_{C^{1,\a}(\g)}\le C\,.
\]
Moreover, denoting by $\tau_Y\in C^{1,1}(\S_Y(\E);\SS^2)$ the unit tangent vector field to $\S_Y(\E)$ obtained by juxtaposing the vector fields $\tau$ defined in Lemma \ref{lemma extfol gammak}, we have
\[
  (f_k-\Id)\cdot\tau_Y=0\quad\mbox{on $[\S(\E)]_\mu$}\,,
\qquad
  \|(f_k-\Id)\cdot\tau_Y\|_{C^1([\S(\E)]_\mu)}\le\frac{C}\mu\,\|f_k-\Id\|_{C^0(\S_T(\E))}\,,
\]
where with a slight abuse of notation with respect to \eqref{S rho} we have set
\[
[\S(\E)]_\mu=\S(\E)\setminus I_\mu(\S_T(\E))\,.
\]
We also notice for future reference that $f_k$ has the following property with respect to the boundaries of the chambers of the clusters, namely
\begin{equation}
  \label{quanto siamo intelligenti}
  f_k(\pa\E(i)\cap\S(\E))=\pa\E_k(i)\cap\S(\E_k)\,,\qquad\forall i=0,...,N\,.
\end{equation}
This last remark completes the picture concerning the singular sets. We now start discussing the problem of mapping $\Ss(\E)$ into   $\Ss(\E_k)$. In the following $\rho_0$ denotes the parameter introduced in Theorem \ref{thm from IC1}. Up to further decreasing the value of $\rho_0$ we may assume that
\begin{equation}
  \label{rho0 2}
    \dist(S,S')\ge 2\rho_0\,,\qquad\forall S\in\Ss_*(\E)\,,S'\in\Ss(\E)\setminus\{S\}\,,
\end{equation}
As a consequence, we find of course that
\begin{eqnarray}\label{rho0 1}
  \dist(S,\S(\E))\ge 2\rho_0\,,\qquad\forall S\in\Ss_*(\E)\,.
\end{eqnarray}
We also assume that
\begin{equation}
  \label{pino}
  \begin{split}
    &S\cap I_\rho(\bd_\tau(S))=S\cap I_\rho(\S(\E))\,,
    \\
    &\mbox{$[S]_\rho$ is connected}\,,
  \end{split}
  \qquad\forall S\in\Ss_\S(\E)\,,\rho<\rho_0\,.
\end{equation}
Finally, we fix $\nu_\E\in C^{1,1}(\pa^*\E;\SS^2)$ (recall that under \eqref{hp} we have that $\pa^*\E$ is a $C^{2,1}$-surface) and set for every $S\in\Ss(\E)$
\begin{equation}\label{def nu S}
\nu_S=\nu_\E\qquad\mbox{on $S\cap\pa^*\E$}\,.
\end{equation}

\begin{lemma}
  \label{lemma Skstar}
  If \eqref{hp} holds, then to every $S\in\Ss(\E)$ and $k\ge k_0$ one can associate $S_k\in\Ss(\E_k)$ in such a way that $S\in\Ss_\S(\E)$ if and only if $S_k\in\Ss_\S(\E_k)$ and
  \begin{equation}
    \label{SkSigma mistico}
    \lim_{k\to\infty}\hd(S,S_k)+\hd(\bd_\tau(S),\bd_\tau(S_k))=0\,.
  \end{equation}
  Moreover, there exists $\rho_0>0$ such that if $\rho<\rho_0$ and $k\ge k_0$ (for $k_0$ that now depends also on $\rho$) then there exists
  $\psi_k\in C^{1,\a}([S]_\rho)$ such that
  \begin{equation}
    \label{SkSigma diffeo 15}
      [S_k]_{3\rho}\subset(\Id+\psi_k\nu_S)([S]_\rho)\subset S_k\,,\qquad\lim_{k\to\infty}\|\psi_k\|_{C^1([S]_{\rho})}=0\,,\qquad \|\psi_k\|_{C^{1,\a}([S]_{\rho})}\le C\,.
  \end{equation}
  In particular, if $S\in\Ss_*(\E)$, then $S_k\in\Ss_*(\E_k)$ and \eqref{SkSigma diffeo 15} boils down to
  \begin{equation}
    \label{Skstar diffeo}
      S_k=(\Id+\psi_k\nu_S)(S)\,,\qquad\lim_{k\to\infty}\|\psi_k\|_{C^1(S)}=0\,,\qquad \|\psi_k\|_{C^{1,\a}(S)}\le C\,.
  \end{equation}
  Finally,
  \begin{equation}
    \label{daidaidai}
      \pa\E_k=\bigcup_{S\in\Ss(\E)}S_k\,.
  \end{equation}
\end{lemma}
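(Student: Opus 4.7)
The plan is to build $S_k$ by normal-graph parametrization over $[S]_\rho$ using Theorem~\ref{thm from IC1}(iii), and then to extend/identify the correspondence through the Hausdorff convergence of the singular stratification from Theorem~\ref{thm convergence singular sets} together with the curve correspondence $\g\mapsto\g_k$ from Lemma~\ref{lemma gammak}. We treat the easy case $S\in\Ss_*(\E)$ first. Since $\dist(S,\Sigma(\E))\ge 2\rho_0$ by \eqref{rho0 1} and $\dist(S,S')\ge 2\rho_0$ for $S'\ne S$ by \eqref{rho0 2}, Theorem~\ref{thm from IC1}(iii) applied with $\rho<\rho_0$ and $R'>\mathrm{diam}(\pa\E)$ produces $\psi_k$ with $(\Id+\psi_k\nu_S)(S)\subset\pa^*\E_k$, and $S$ being compact and connected with $\psi_k\to 0$ in $C^1(S)$ implies that for $k$ large the image is a closed connected embedded surface in $\pa^*\E_k$ disjoint from $I_{\rho_0}(\Sigma(\E_k))$ (using Theorem~\ref{thm convergence singular sets}). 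Hence it coincides with a unique $S_k\in\Ss_*(\E_k)$ and \eqref{Skstar diffeo} follows.

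For $S\in\Ss_\Sigma(\E)$, fix $\rho<\rho_0$ small and apply Theorem~\ref{thm from IC1}(iii) to get $\psi_k\in C^{1,\a}([\pa\E]_\rho)$ with the stated bounds. Restricting $\psi_k$ to $[S]_\rho\subset[\pa\E]_\rho$ and using the connectedness of $[S]_\rho$ from \eqref{pino}, the image $(\Id+\psi_k\nu_S)([S]_\rho)\subset\pa^*\E_k$ is contained in $\INT(S_k^*)$ for a unique $S_k\in\Ss(\E_k)$, which defines the correspondence. To establish $[S_k]_{3\rho}\subset(\Id+\psi_k\nu_S)([S]_\rho)$, pick $y\in[S_k]_{3\rho}$: then $\dist(y,\bd_\tau(S_k))\ge 3\rho$, and since $\bd_\tau(S_k)\subset\Sigma(\E_k)$ and $\Sigma(\E_k)\subset I_\rho(\Sigma(\E))$ for $k$ large by Theorem~\ref{thm convergence singular sets}, the first identity in \eqref{pino} shows that the projection of $y$ onto $\pa\E$ via $\nu_S$ falls into $[\pa\E]_\rho$. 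Using \eqref{zetak normale para} we conclude $y\in(\Id+\psi_k\nu_S)(B_{R'}\cap[\pa\E]_\rho)$, and the connectedness of $[S]_\rho$ together with the approximate locality of the normal deformation forces the preimage to lie in $[S]_\rho$ (it cannot jump to another $S'\in\Ss(\E)$ without $y$ being closer to $\Sigma(\E_k)$ than $3\rho$). This proves \eqref{SkSigma diffeo 15}.

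The main obstacle is the matching $S\in\Ss_\Sigma(\E)\Leftrightarrow S_k\in\Ss_\Sigma(\E_k)$ and the convergence $\hd(\bd_\tau(S),\bd_\tau(S_k))\to 0$. For the forward direction, observe that $\bd_\tau(S)$ is a finite union of curves $\g\in\Gamma(\E)$; by Taylor's trihedral structure \eqref{covering 1}--\eqref{covering 1k} exactly three surfaces of $\Ss(\E)$ (resp.\ $\Ss(\E_k)$) meet along each such $\g$ (resp.\ $\g_k$). Taking $\rho$ smaller than half the injectivity radius of the curve parametrizations of Lemma~\ref{lemma extfol gammak}, one reads off from the local diffeomorphism $\Phi^i_{k,r}$ that the three $C^{1,\a}$-surface pieces of $\pa\E_k$ incident to $\g_k$ correspond, via the normal graph, to the three surface pieces of $\pa\E$ incident to $\g$. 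This pairing is consistent across overlapping balls by connectedness, so $\bd_\tau(S_k)=\bigcup\{\g_k:\g\in\Gamma(\E),\,\g\subset\bd_\tau(S)\}$. Since $\hd(\g,\g_k)\to 0$ by \eqref{gammak hausdorf convergence}, this gives \eqref{SkSigma mistico}, and shows $\bd_\tau(S_k)=\emptyset\Leftrightarrow\bd_\tau(S)=\emptyset$, i.e.\ the $\Ss_*/\Ss_\Sigma$ dichotomy is preserved.

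Finally, \eqref{daidaidai} follows by exhaustion: the collection $\{S_k\}_{S\in\Ss(\E)}$ is finite and each $S_k\in\Ss(\E_k)$; conversely, if $S'\in\Ss(\E_k)$, then by Theorem~\ref{thm from IC1}(i) some $x'\in\INT((S')^*)$ lies in $N_\e([\pa\E]_\rho)$ for $k$ large, and by \eqref{zetak normale para} $x'$ lies in the image $(\Id+\psi_k\nu_S)([S]_\rho)$ for exactly one $S\in\Ss(\E)$, forcing $S'=S_k$ by uniqueness of the enclosing surface. Combined with \eqref{gammak Sigma decomposition}, this covers all of $\pa\E_k$ and closes the proof.
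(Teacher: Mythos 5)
Your overall strategy --- normal graphs over $[S]_\rho$ via Theorem \ref{thm from IC1}(iii), local cone models near the singular curves, and connectedness to pin down $S_k$ --- is the same as the paper's, but there is a genuine gap at the central step, namely the proof of \eqref{SkSigma mistico} and of the inclusion $[S_k]_{3\rho}\subset(\Id+\psi_k\nu_S)([S]_\rho)$ for $S\in\Ss_\S(\E)$. Your argument for $y\in[S_k]_{3\rho}$ runs: $\dist(y,\bd_\tau(S_k))\ge3\rho$, $\bd_\tau(S_k)\subset\S(\E_k)\subset I_\rho(\S(\E))$, hence the foot of $y$ on $\pa\E$ lies in $[\pa\E]_\rho$. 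This does not follow: distance from $\bd_\tau(S_k)$ controls distance from $\S(\E)$ only if one already knows that the singular curves of $\E_k$ passing near $S_k$ belong to $\bd_\tau(S_k)$, and that points of $S_k$ far from $\bd_\tau(S_k)$ lie near $S$ rather than near some other limit surface. Both facts are exactly what must be proved: a priori $S_k$, defined only as the surface of $\E_k$ containing the graph over $[S]_\rho$, could extend past the curves $\g_k$ and sweep over sheets lying above other elements of $\Ss(\E)$, or carry extra boundary curves far from $\bd_\tau(S)$; and \eqref{pino} is a statement about $S$, not about $S_k$, so it cannot be invoked for $y\in S_k$ before one knows $\hd(S,S_k)\to0$. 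Your phrases ``the pairing is consistent across overlapping balls by connectedness'' and ``it cannot jump to another $S'$ without $y$ being closer to $\S(\E_k)$ than $3\rho$'' assert precisely this nontrivial global statement without proof.

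The paper closes this gap with an explicit topological gluing: setting $M_k=(\Id+\psi_k\nu_S)([S]_\rho)\cup\big(S_k\cap\cl(I_{s_*}(\bd_\tau(S)))\big)$, where $s_*$ is a collar size attached to the covering of $\bd_\tau(S)$ from Lemma \ref{lemma covering sing sets}, it checks that $M_k$ is a compact topological surface whose topological boundary is contained in $\bd_\tau(S_k)$; the delicate case is the outer rim $\{x\in S_k:\dist(x,\bd_\tau(S))=s_*\}$, which is shown to lie on the normal graph by using the interface convergence \eqref{boundary haudorff interfaces} for the chambers $i,j$ with $S\subset\pa\E(i)\cap\pa\E(j)$. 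Connectedness of $S_k$ then gives $M_k=S_k$, from which $\hd(S,S_k)\to 0$, $f_k(\bd_\tau(S))=\bd_\tau(S_k)$ (hence the $\Ss_*/\Ss_\S$ dichotomy), and only then $[S_k]_{3\rho}\subset(\Id+\psi_k\nu_S)([S]_\rho)$ are deduced --- the logical order is the reverse of yours. A smaller but related issue concerns \eqref{daidaidai}: your claim that every $S'\in\Ss(\E_k)$ contains a point of $N_\e([\pa\E]_\rho)$ is not justified for surfaces that might hide inside $I_{2\rho}(\S(\E))$; for $S'\in\Ss_\S(\E_k)$ the paper instead identifies $S'$ through the boundary homeomorphism $f_k$ and the chamber compatibility \eqref{quanto siamo intelligenti}. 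Your treatment of the case $S\in\Ss_*(\E)$ is correct and matches the paper's.
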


\begin{proof}
  {\it Step one}: In this step we associate to each $S\in\Ss_\S(\E)$ a surface $S_k\in\Ss_\S(\E_k)$ in such a way that \eqref{SkSigma mistico} and \eqref{SkSigma diffeo 15} hold. Let $\de_0$ be defined as in the proof of Lemma \ref{lemma gammak}, and let $I$, $k_0$, $\{x^i\}_{i\in I}\subset\S(\E)$, $\{x^i_k\}_{i\in I}\subset\S(\E_k)$, $\{t^i\}_{i\in I}$, and $t_*$ be likewise defined correspondingly to $\de=\de_0/2$. In particular, $t_*\le t^i\le\de_0/4$ for every $i\in I$ and
    \begin{equation}
    \label{perbenino S xxx}
      \bd_\tau(S)=S\cap\S(\E)\subset \bigcup_{i\in I(S)}B_{x^i,t^i/3}\,,
  \end{equation}
  where $I(S)=\{i\in I:x^i\in S\}$. By compactness and by \eqref{perbenino S xxx} there exists $s_*>0$ (depending on $\de_0$) such that
  \begin{equation}
    \label{ricoprimento bellino}
    I_{s_*}(\bd_\tau(S))\subset \bigcup_{i\in I(S)}B_{x^i,t^i/3}\,.
  \end{equation}
  We shall require that $\rho_0$, in addition to the various constraints considered so far, is small enough in terms of $s_*$. By Theorem \ref{thm from IC1} for every $\rho<\rho_0$ and $k\ge k_0$ there exists $\psi_k\in C^{1,\a}([\pa\E]_\rho)$ (where $[\pa\E]_\rho=\pa\E\setminus I_\rho(\S(\E))$) such that
  \begin{equation}
    \label{zetak parametrizzano p}
      \pa\E_k\setminus I_{2\rho}(\S(\E))\subset (\Id+\psi_k\nu_\E)([\pa\E]_\rho) \subset\pa^*\E_k\,,
  \end{equation}
  \begin{equation}
    \label{zetak normale para p}
      N_{\e_0}([\pa\E]_{\rho})\cap\pa\E_k=(\Id+ \psi_k\,\nu_\E)([\pa\E]_\rho)\,,
  \end{equation}
  \begin{equation}
    \label{zetak tendono a zero p}
      \lim_{k\to\infty}\|\psi_k\|_{C^1([\pa\E]_\rho)}=0\,,\qquad \|\psi_k\|_{C^{1,\a}([\pa\E]_\rho)}\le C\,.
  \end{equation}
  Here $k_0\in\N$ and $\e_0\in(0,\rho)$ depend also on $\rho$, while $C$ just depends on $\E$, $\a$ and $\Lambda$.

  By the first condition in \eqref{pino}, if $S\in\Ss_\S(\E)$, then $[S]_\rho=[\pa\E]_\rho\cap S$, and thus, thanks to \eqref{zetak normale para p} and provided $\|\psi_k\|_{C^0([\pa\E]_\rho)}\le\e_0$, we get
  \begin{equation}
    \label{label}
      (\Id+\psi_k\nu_S)([S]_\rho)= N_{\e_0}([S]_\rho)\cap(\Id+\psi_k\nu_\E)([\pa\E]_\rho)=  N_{\e_0}([S]_\rho)\cap\pa\E_k\,.
  \end{equation}
  Since $(\Id+\psi_k\nu_S)([S]_\rho)$ is connected by the second condition in \eqref{pino}, we find that there exists a unique $S_k\in\Ss(\E_k)$ such that
  \begin{equation}
    \label{pino2}
      (\Id+\psi_k\nu_S)([S]_\rho)\subset \INT(S_k)\subset S_k\,.
  \end{equation}
  We notice that $S_k\in\Ss_\S(\E_k)$: indeed, for each $i\in I(S)$ and provided $k_0$ is large enough with respect to $\rho$, we have that
  \begin{equation}
    \label{sangue vero proof}
      (\Id+\psi_k\nu_S)([S]_\rho)\cap B_{x^i_k,2t^i/3}\ne\emptyset\,;
  \end{equation}
  at the same time, by construction, $\pa\E_k\cap B_{x^i_k,2t^i/3}$ consists of the intersection with $B_{x^i_k,2t^i/3}$ of exactly three or four surfaces from $\Ss_\S(\E)$. Hence $S_k\in\Ss(\E_k)$. Now let us set
  \begin{equation*}
      M_k=M_k^1\cup M_k^2\quad\mbox{where}\quad
  \left\{\begin{split}
    &M_k^1=(\Id+\psi_k\,\nu_S)([S]_\rho)\,,
    \\
    &M_k^2=S_k\cap \cl\big(I_{s_*}(\bd_\tau(S))\big)\,.
  \end{split}\right .
  \end{equation*}
  We claim that $M_k=S_k$. Since, trivially, $M_k$ is a compact subset of $S_k$, by the connectedness of $S_k$ it will suffice to prove that $M_k$ is a topological surface with boundary $\bd_\tau(M_k)\subset\bd_\tau(S_k)$. Indeed, $M_k$ is locally homeomorphic to an open disk at every $x\in M_k$ such that
  \begin{equation}
    \label{label due}
      \begin{split}
    &\mbox{either}\quad x\in (\Id+\psi_k\,\nu_S)\Big(S\setminus\cl\big(I_\rho(\bd_\tau(S))\big)\Big)\,,
    \\
    &\mbox{or}\quad   x\in \INT(S_k)\cap  I_{s_*}(\bd_\tau(S))\,,
    \\
    &\mbox{or}\quad       x\in(\Id+\psi_k\nu_S)\Big(\big\{y\in S:\dist(y,\bd_\tau(S))=\rho\big\}\Big)\,,
    \\
    &\mbox{or}\quad      x\in S_k\,,\qquad \dist(x,\bd_\tau(S))=s_*\,;
  \end{split}
  \end{equation}
  and $M_k$ is locally homeomorphic  to a open half-disc union its diameter at every $x\in M_k$ such that
  \begin{equation}
    \label{label three}
      x\in \bd_\tau(S_k)\cap  I_{s_*}(\bd_\tau(S))\,.
  \end{equation}
  The first two cases in \eqref{label due} and \eqref{label three} are trivial (as we are localizing the topological surface with boundary $S_k$ by intersecting it with certain open sets). In the third case, $x=y+\psi_k(y)\nu_S(y)$ with $\dist(y,\bd_\tau(S))=\rho$, so that
  \[
  \dist(x,\bd_\tau(S))\le \rho+\|\psi_k\|_{C^0([S]_\rho)}<s_*\,,
  \]
  provided $k_0$ is large enough; this shows that
  \[
  (\Id+\psi_k\nu_S)\Big(\big\{y\in S:\dist(y,\bd_\tau(S))=\rho\big\}\Big)\subset S_k\cap I_{s_*}(\bd_\tau(S))\subset M_k\,,
  \]
  and thus addresses the third case of \eqref{label due}. In the fourth case, we fix $0\le i<j\le N$ such that $S\subset\pa\E(i)\cap\pa\E(j)$ and notice that by \eqref{pino2} we have $S_k\subset\pa\E_k(i)\cap \pa\E_k(j)$. Hence, \eqref{boundary haudorff interfaces} implies
  \begin{eqnarray*}
  S_k\cap \pa I_{s_*}(\bd_\tau(S))&\subset&\pa\E_k(i)\cap \pa\E_k(j)\cap\pa I_{s_*}(\bd_\tau(S))
  \\
  &\subset&I_{\e_0/2}\Big(\pa\E(i)\cap \pa\E(j)\cap\pa I_{s_*}(\bd_\tau(S))\Big)
  \\
  &=&
  I_{\e_0/2}\Big(S\cap\pa I_{s_*}(\bd_\tau(S))\Big) \subset I_{\e_0/2}([S]_{3\rho})
  \subset N_{\e_0}([S]_{2\rho})\,.
  \end{eqnarray*}
  In particular, by \eqref{label}
  \[
  S_k\cap \pa I_{s_*}(\bd_\tau(S))\subset S_k\cap N_{\e_0}([S]_{2\rho})\subset (\Id+\psi_k\nu_S)([S]_{2\rho})\,,
  \]
  so that the fourth case of \eqref{label due} is a particular instance of the first one. We have thus shown that $M_k=S_k$, that is
  \begin{equation}\label{sangue}
   S_k=(\Id+\psi_k\,\nu_S)([S]_\rho)\cup\Big( S_k\cap \cl\big(I_{s_*}(\bd_\tau(S))\big)\Big)\,.
  \end{equation}
  We notice that in the process of showing \eqref{sangue}  we have also proved that (see in particular \eqref{sangue vero proof})
  \begin{equation}
    \label{sangue vero}
      \bd_\tau(S_k)\cap B_{x^i_k,2t^i/3}=\S(\E_k)\cap B_{x^i_k,2t^i/3}\,,\qquad\mbox{$\forall i\in I(S)$ s.t. $x^i_k\in\S_Y(\E_k)$}\,.
  \end{equation}
  We now claim that for every $\eta>0$ and $k\ge k_0$ (depending on $\eta$) we have
  \begin{equation}
    \label{sangue due}
    S_k\subset I_{\eta}(S)\,,\qquad \bd_\tau(S_k)\subset I_\eta(\bd_\tau(S))\,.
  \end{equation}
  Indeed, let us repeat the argument leading to \eqref{sangue} with a suitably small $\de=\de(\eta)$ in place of $\de=\de_0/2$ (notice that, by connectedness of $S_k$ we select the same surface from $\Ss_\S(\E_k)$ in the process): correspondingly, we find $s_*(\eta)<\de(\eta)$ and $\rho$ suitably small with respect to $s_*(\eta)$ in such a way that \eqref{sangue} holds for $k\ge k_0$ and with $s_*(\eta)$ in place of $s_*$. As a consequence \eqref{sangue due} immediately follows. We now notice that
  \begin{equation}
    \label{sangue tre}
    f_k(\bd_\tau(S))=\bd_\tau(S_k)\,.
  \end{equation}
  Indeed, if $\g\in\Gamma(S)$, then up to adding finitely many  points to the family $\{x^i\}_{i\in I}$, we can assume that there exists $i\in I(S)$ such that $x^i\in\INT(\g)\subset\S_Y(\E)$. In particular, $x^i_k\in\S_Y(\E_k)$ and $f_k(x^i)\in B_{x^i_k,2t^i/3}$ for $k$ large enough, so that \eqref{sangue vero} implies
  \[
  f_k(\g)\cap\bd_\tau(S_k)\ne\emptyset\,.
  \]
  By connectedness, $f_k(\g)\subset\bd_\tau(S_k)$, and thus $f_k(\bd_\tau(S))\subset\bd_\tau(S_k)$.
  To prove the converse inclusion we notice that
  \begin{equation}
    \label{sangue vero 0}
      \bd_\tau(S)\cap B_{x^i,t^i/3}=\S(\E)\cap B_{x^i,t^i/3}\,,\qquad\mbox{$\forall i\in I(S)$ s.t. $x^i\in\S_Y(\E)$}\,.
  \end{equation}
  If we now pick $\g^*\in\Gamma(S_k)$, then $\g^*\subset I_{s_*/2}(\bd_\tau(S))$ thanks to \eqref{sangue due}. At the same time, for $k\ge k_0$, $f_k^{-1}(\g^*)=\g\in\Gamma(\E)$ with $\g\subset I_{s_*}(\bd_\tau(S))$. In particular there exists $x^i\in\INT(\g)\cap \bd_\tau(S)$, thus $\g\in\Gamma(S)$, that is $\g^*=f_k(\g)\subset f_k(\bd_\tau(S))$. This completes the proof of \eqref{sangue tre} and shows that
  \begin{equation}
    \label{done}
      \lim_{k\to\infty}\hd(\bd_\tau(S_k),\bd_\tau(S))=0\,.
  \end{equation}
  Thanks to \eqref{sangue due} and a standard compactness argument in order to prove $\hd(S_k,S)\to 0$ we just need to check that for every $x\in S$ there exists $x_k\in S_k$ such that $x_k\to x$ as $k\to\infty$. Indeed, if $x\in\INT(S)$ then $x\in[S]_\rho$ for $\rho=\rho(x)$ small enough. In particular, for $k\ge k(x)$ we have $\psi_k(x)\in S_k$ and $x_k=\psi_k(x)\to x$; if, instead, $x\in\bd_\tau(S)$, then we are done thanks to \eqref{done}. We finally notice that since $\hd(S_k,S)\to 0$ for $k\to\infty$, given $\rho<\rho_0$ one can find $k_0$ depending on $\rho$ such that
  \[
  S_k\subset I_{2\rho}(\bd_\tau(S))\cup N_{\e_0}([S]_\rho)\,,\qquad\forall k\ge k_0\,,
  \]
  and thus, thanks also to \eqref{label} and \eqref{done},
  \begin{eqnarray*}
  (\Id+\psi_k\nu_S)([S]_\rho)&=&N_{\e_0}([S]_{\rho})\cap \pa\E_k\supset N_{\e_0}([S]_{\rho})\cap S_k
  \\
  &\supset& S_k\setminus I_{2\rho}(\bd_\tau(S))\supset S_k\setminus I_{3\rho}(\bd_\tau(S_k))=[S_k]_{3\rho}\,.
  \end{eqnarray*}
  This remark completes the proof of \eqref{SkSigma diffeo 15}, thus of step one.

  \medskip

  \noindent {\it Step two}: We now associate to each $S\in\Ss_*(\E)$ a surface $S_k\in\Ss_*(\E_k)$ in such a way that \eqref{SkSigma mistico} and \eqref{Skstar diffeo} hold. Indeed, by \eqref{rho0 1} we have $[S]_\rho=S$ for every $\rho<\rho_0$. In particular, $S\subset[\pa\E]_\rho$. We claim that
  \begin{equation}
    \label{Skstar 1}
      N_{\e_0}(S)\cap(\Id+\psi_k\nu_\E)([\pa\E]_\rho)=(\Id+\psi_k\nu_S)(S)\,.
  \end{equation}
  The $\supset$ inclusion follows by $S\subset[\pa\E]_\rho$, provided $k_0$ is large enough to entail $\|\psi_k\|_{C^0([\pa\E]_\rho)}\le\e_0$; the $\subset$ inclusion follows from the fact that if $x+\psi_k(x)\nu_\E(x)\in N_{\e_0}(S)$ for some $x\in[\pa\E]_\rho\setminus S$, then $\dist(x,S)\le\dist(x+\psi_k(x)\nu_\E(x),S)+\|\psi_k\|_{C^0([\pa\E]_\rho)}<2\rho_0$ for some $x\in S'\in\Ss(\E)\setminus\{S\}$, against \eqref{rho0 2}. The same argument shows that
  \begin{equation}
    \label{Skstar 2}
      N_{\e_0}(S)\cap N_{\e_0}([\pa\E]_{\rho})=N_{\e_0}(S)\,,
  \end{equation}
  so that, by intersecting both sides of \eqref{zetak normale para} with $N_{\e_0}(S)$ we find
  \[
  N_{\e_0}(S)\cap\pa\E_k=(\Id+\psi_k\nu_S)(S)\,,\qquad\forall k\ge k_0\,.
  \]
  Since $S$ is connected, one has that $(\Id+\psi_k\nu_S)(S)$ is connected. By connectedness of the surfaces in $\Ss(\E_k)$, there exists a unique $S_k\in\Ss(\E_k)$ which intersects $(\Id+\psi_k\nu_S)(S)$, and thus must actually be equal to $(\Id+\psi_k\nu_S)(S)$ and belong to $\Ss_*(\E_k)$, with \eqref{Skstar diffeo} in force thanks to \eqref{zetak tendono a zero p}.

  \medskip

  \noindent {\it Step three}: We prove \eqref{daidaidai}. Pick $S'\in\Ss_\S(\E_k)$, and let $0\le i<j\le N$ be such that $S'\subset\pa\E_k(i)\cap\pa\E_k(j)$. By \eqref{quanto siamo intelligenti}, $f_k^{-1}(\bd_\tau(S'))\subset \pa\E(i)\cap\pa\E(j)\cap\S(\E)$. Thus there exists $S\in\Ss_\S(\E)$ such that $S\subset\pa\E(i)\cap\pa\E(j)$ and $f_k^{-1}(\bd_\tau(S'))\cap S\ne\emptyset$. Let $S_k$ be the surface associated to $S$ by step one, so that, by the properties proved in step one,
  \[
  S',S_k\in\Ss_\S(\E_k)\,,\qquad S'\,,S_k\subset\pa\E_k(i)\cap\pa\E_k(j)\,,\qquad \bd_\tau(S')\cap\bd_\tau(S_k)\ne\emptyset\,,
  \]
  and hence $S'=S_k$. If instead $S'\in\Ss_*(\E_k)$, then by the first inclusion in \eqref{zetak parametrizzano p} it must be $S'\cap (\Id+\psi_k\,\nu_S)([S]_\rho)\ne\emptyset$ for some $S\in\Ss(\E)$. Should it be $S\in\Ss_\S(\E)$, then by arguing as in step one (see in particular \eqref{sangue vero}) we would find $S'=S_k\in\Ss_\S(\E_k)$, a contradiction. Thus $S\in\Ss_*(\E)$, and $S'=(\Id+\psi_k\nu_S)(S)=S_k$ under the correspondence defined in step two.
\end{proof}

We now notice that, thanks to Theorem \ref{thm structure in R3}, given $S\in\Ss(\E)$ the vector field $\nu_S:\INT(S)\to\SS^2$ defined in \eqref{def nu S} satisfies $|\nu_S(x)-\nu_S(y)|\le L\,|x-y|^\a$ for every $x,y\in\INT(S)$ for a constant $L$ depending on $\E$ only. In particular, $\nu_S$ can be uniquely extended by continuity to $S$ in such a way that
\begin{equation}\label{nuS L}
  \begin{split}
    |\nu_S(x)-\nu_S(y)|\le L\,|x-y|^\a\,,
    \\
    |\nu_S(x)\cdot(y-x)|\le L\,|x-y|^{1+\a}\,,
  \end{split}\qquad\forall x,y\in S\,.
\end{equation}
By regularity of $\INT(S_k)$, there exists $\nu_{S_k}\in C^{0,\a}(\INT(S_k);\SS^2)$ such that $\nu_{S_k}(x)^\perp=T_xS_k$ for every $x\in\INT(S_k)$. By exploiting \eqref{SkSigma diffeo 15}, \eqref{Skstar diffeo}, and Lemma \ref{lemma covering sing sets} through an argument analogous to the one used in the proof of Lemma \ref{lemma extfol gammak}, we find that $\nu_{S_k}$ extends by continuity to the whole $S_k$ in such a way that
\begin{equation}\label{nuSk L}
  \begin{split}
    |\nu_{S_k}(x)-\nu_{S_k}(y)|\le L\,|x-y|^\a\,,
    \\
    |\nu_{S_k}(x)\cdot(y-x)|\le L\,|x-y|^{1+\a}\,,
  \end{split}\qquad\forall x,y\in S_k\,.
\end{equation}
Moreover, thanks to Theorem \ref{thm convergence singular sets},
\[
x_k\in S_k\,,\quad \lim_{k\to\infty}x_k=x\in S\,,\qquad\Rightarrow\qquad \lim_{k\to\infty}\nu_{S_k}(x_k)=\nu_S(x)\,.
\]
In particular, arguing by contradiction, one sees that
\begin{equation}
  \label{Sk normali}
  \lim_{k\to\infty}\|(\nu_{S_k}\circ f_k)-\nu_S\|_{C^0(\bd_\tau(S))}=0\,.
\end{equation}
Recalling that $S^*=S\setminus\S_T(\E)$ and $S_k^*=S_k\setminus\S_T(\E)$ are $C^{1,\a}$-surfaces with boundary, and denoting by $\nu_{S^*}^{co}$ the outer unit conormal to $S^*$ at $\bd(S^*)$, and similarly defining $\nu_{S^*_k}^{co}$, one comes to prove by analogous arguments that
\begin{equation}
  \label{Sk normali xx}
  \lim_{k\to\infty}\|(\nu_{S_k^*}^{co}\circ f_k)-\nu_{S^*}^{co}\|_{C^0(\bd(S^*))}=0\,.
\end{equation}
As the last preparatory step towards the proof of Theorem \ref{thm improved convergence 3d}, we now prove the following extension lemma.

\begin{lemma}\label{lemma extension from gamma}
  If \eqref{hp} holds, $S\in\Ss(\E)$ and $a\in C^0(\bd_\tau(S))$ is such that $a\in C^{1,\a}(\g)$ for every $\g\in\Gamma(S)$, then there exists $\bar{a}\in C^{1,\a}(\R^3)$ such that $\bar{a}=a$ on $\bd_\tau(S)$ and
  \begin{eqnarray}
    \|\bar{a}\|_{C^{1,\a}(\R^3)}\le C\,\max_{\g\in\Gamma(S)}\|a\|_{C^{1,\a}(\g)}\qquad\|\bar{a}\|_{C^1(\R^3)}\le C\,\max_{\g\in\Gamma(S)}\|a\|_{C^1(\g)}\,.
  \end{eqnarray}
\end{lemma}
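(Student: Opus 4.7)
The plan is to apply a Whitney-type $C^{1,\alpha}$ extension theorem to a jet $(a, D)$ built on the compact one-dimensional set $F = \bd_\tau(S) \subset \R^3$, where $D : F \to \R^3$ is a continuous candidate for the gradient of the extension. The subtlety is that the naive choice $D(y) = \partial_\tau a(y)\,\tau(y)$ along each $\g \in \Gamma(S)$ is not continuous at the $T$-points of $S$, so one must build $D$ so as to reconcile the different tangential derivatives at each such vertex.

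The key geometric observation is that at each $z \in \S_T(\E) \cap S$ exactly two curves in $\Gamma(S)$ terminate at $z$. This follows from Theorem \ref{thm structure in R3}: the tangent cone $T_z\pa\E$ is isometric to $T$, and each of the six two-dimensional sectors of $T$ has on its boundary exactly two of the four half-lines forming $\s_Y(T)$; since $S$ corresponds locally to one such sector, only two of the four curves of $\Gamma(\E)$ through $z$ lie in $\Gamma(S)$. Denoting by $\tau_1(z), \tau_2(z)$ the linearly independent unit tangents of these two curves at $z$, the system
\[
v \cdot \tau_1(z) = \partial_{\tau_1} a(z), \qquad v \cdot \tau_2(z) = \partial_{\tau_2} a(z)
\]
is solvable in $\R^3$; I would set $v_z \in \mathrm{span}(\tau_1(z), \tau_2(z))$ to be its unique solution there, with $|v_z|$ controlled by $\max_\g \|a\|_{C^{1,\alpha}(\g)}$ times a universal constant (depending only on the geometry of $T$).

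Next, on each $\g \in \Gamma(S)$, with $T$-endpoints among $\{z_0, z_1\}$ if $\g \in \Gamma_T(\E)$, I would pick a $C^{0,\alpha}$ orthonormal frame $\{n_1, n_2\}$ of $\tau(y)^\perp$ along $\g$ (which exists since $\|\g\|_{C^{1,\alpha}} \le L$ by Lemma \ref{lemma extfol gammak}) and define
\[
D_\g(y) = \partial_\tau a(y)\,\tau(y) + c_1(y)\, n_1(y) + c_2(y)\, n_2(y),
\]
where $c_1, c_2$ are scalar $C^{0,\alpha}$ functions on $\g$ interpolating linearly in arc length between the endpoint values determined by $D_\g(z_i) = v_{z_i}$. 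Patching these together and setting $D(z) = v_z$ at each $T$-point produces a continuous (indeed $C^{0,\alpha}$) map $D : \bd_\tau(S) \to \R^3$ with $D(y) \cdot \tau(y) = \partial_\tau a(y)$ on $\INT(\g)$ for every $\g \in \Gamma(S)$ and $\|D\|_{C^{0,\alpha}(\bd_\tau(S))} \le C \max_\g \|a\|_{C^{1,\alpha}(\g)}$.

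Finally, I would verify the two Whitney conditions $|D(x) - D(y)| \le C|x-y|^\alpha$ and $|a(x) - a(y) - D(y) \cdot (x-y)| \le C|x-y|^{1+\alpha}$ for all $x, y \in \bd_\tau(S)$, splitting into three cases: same curve (both bounds follow from $a \in C^{1,\alpha}(\g)$, $\|\g\|_{C^{1,\alpha}} \le L$, and the fact that arc length and chord length agree up to $O(|x-y|^{1+\alpha})$); different curves with disjoint closures (trivial, since $|x-y|$ is then bounded below by the finiteness of $\Gamma(S)$ and $\S_T(\E) \cap S$); and different curves sharing a common $T$-endpoint $z$, both points close to $z$, where the defining property $v_z \cdot \tau_i(z) = \partial_{\tau_i} a(z)$ forces cancellation of the leading terms in a Taylor expansion of $a$ and $D$ around $z$ along each curve. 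Once these conditions are in place, the classical Whitney extension theorem for $C^{1,\alpha}$-jets delivers $\bar a \in C^{1,\alpha}(\R^3)$ with $\bar a = a$ and $\nabla \bar a = D$ on $\bd_\tau(S)$, and the stated norm bound; the $C^1$ bound follows by the same construction tracking only Lipschitz constants. I expect the main obstacle to be the third case of the Whitney verification, which is exactly where the geometric fact that only two curves of $\Gamma(S)$ meet at each $T$-point is essential: with three or more curves, the linear system defining $v_z$ would be overdetermined and the extension could fail.
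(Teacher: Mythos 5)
Your proposal is correct and follows essentially the same route as the paper: both build a Whitney $C^{1,\a}$-jet on $\bd_\tau(S)$ whose vector component has the intrinsic tangential derivative along each curve and is reconciled at each $T$-point by exploiting that exactly two curves of $\Gamma(S)$ meet there with transversal tangents, then verify the two Whitney conditions by the same three-case analysis (same curve, far curves, adjacent curves near the common endpoint) and invoke Whitney's extension theorem. The only cosmetic difference is that you build the normal part of the jet by explicit arc-length interpolation in a H\"older normal frame, whereas the paper takes an arbitrary H\"older field $\beta_i$ orthogonal to $\tau_i$ subject to the corner compatibility condition, which amounts to the same choice of corner value $v_z$.
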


\begin{proof}
  The lemma is proved by an application of Whitney's extension theorem, see \cite[Section 2.3]{CiLeMaIC1} for the notation and terminology adopted here. Let $X=\bd_\tau(S)$, so that $X$ is connected by rectifiable arcs and its geodesic distance
  ${\rm \dist}_X$ satisfies ${\rm \dist}_X(x,y)\le \om\,|x-y|$ whenever $x,y\in X$ and for some $\om>0$ depending on $S$ only. We claim the existence of a continuous vector-field $\ov{F}:X\to\R^3$ such that
  \begin{equation}\label{buscema ovF}
    \begin{split}
    |\ov{F}(x)-\ov{F}(y)|\le C\,|x-y|^\a\,,
    \\
    |a(y)-a(x)-\ov{F}(x)\cdot(y-x)|\le C\,|x-y|^{1+\a}\,,
    \end{split}\qquad\forall x,y\in X\,.
  \end{equation}
  We may then apply \cite[Theorem 2.3]{CiLeMaIC1} to the jet $\F=\{F^\kk\}_{|\kk|\le1}$ with $F^\00=a$ and $F^{e_i}=\ov{F}\cdot e_i$ in order to conclude the proof of the lemma. Since $X$ consists of finitely many cycles lying at mutually positive distance, in the proof of \eqref{buscema ovF} we may as well assume that $X$ consists of a single cycle. By Theorem \ref{thm structure in R3}, either $X$ consists of a single $C^{2,1}$-diffeomorphic image of $\SS^1$, or $X=\bigcup_{i=1}^m\g_i$ where $m\ge 2$ and each $\g_i$ is a compact connected $C^{2,1}$-curve with boundary such that $\g_i\cap\g_{i+1}=\bd(\g_i)\cap\bd(\g_{i+1})=\{p_i\}$ and $|\tau_i(p_i)\cdot\tau_{i+1}(p_i)|<1$ for every $i=1,...,m$. Here $\g_{m+1}=\g_1$ and $\tau_i\in C^{0,1}(\g_i,\SS^1)$ is a tangent unit vector field to $\g_i$, oriented so that $\tau_i(p_i)$ points outwards $\g_i$ at $p_i$, and $\tau_{i+1}(p_i)$ points inwards $\g_{i+1}$ at $p_i$. Clearly, we have
  \begin{equation}
    \label{buscema taui}
    |\tau_i(x)-\tau_i(y)|\le C\,|x-y|\,,\qquad\forall x,y\in\g_i\,.
  \end{equation}
  We also record for future use that
  \begin{equation}
    \label{buscema bound figo}
    \max\{|x-p_i|,|y-p_i|\}\le C\,|x-y|\,,\qquad \forall x\in\g_i\,,y\in\g_{i+1}\,,
  \end{equation}
  as it follows easily by $|\tau_i(p_i)\cdot\tau_{i+1}(p_i)|<1$.

  Now, let us set
  \[
  \a_i(x)=\nabla^{\g_i}a(x)[\tau_i(x)]\,,\qquad x\in\INT(\g_i)\,,
  \]
  so that, if we denote by $\g_i(x,y)$ the arc of $\g_i$ joining $x,y\in\g_i$, then
  \begin{equation}\label{buscema alphai}
    \begin{split}
      |a(y)-a(x)-\a_i(x)\,\H^1(\g_i(x,y))|\le C\,|x-y|^{1+\a}\,,
      \\
      |\a_i(x)-\a_i(y)|\le C\,|x-y|^{1+\a}\,,
    \end{split}\qquad\forall x,y\in\g_i\,.
  \end{equation}
  We claim that \eqref{buscema ovF} holds provided we set
  \begin{equation}
    \label{buscema def ovF}
      \ov{F}(x)=\a_i(x)\tau_i(x)+\beta_i(x)\,,\qquad x\in\g_i\,,
  \end{equation}
  for any choice of $\beta_i:\g_i\to\R^3$ such that
  \begin{equation}\label{buscema betai}
    \begin{split}
      \beta_i(x)\cdot\tau_i(x)=0\,,\qquad
      |\beta_i(x)|\le C
      \\
      |\beta_i(x)-\beta_i(y)|\le C|x-y|^\a\,,
    \end{split}\qquad\forall x,y\in\g_i\,,
  \end{equation}
  and such that the compatibility conditions
  \begin{equation}
    \label{buscema compatibility}
    \a_i(p_i)\tau_i(p_i)+\beta_i(p_i)=\a_{i+1}(p_i)\tau_{i+1}(p_i)+\beta_{i+1}(p_i)\,,\qquad 1\le i\le m\,,
  \end{equation}
  hold. (Note that \eqref{buscema compatibility} is a necessary condition for a function $\ov{F}$ defined as in \eqref{buscema def ovF} to be continuous on $X$, and that the existence of choices of $\beta$ satisfying \eqref{buscema betai} and \eqref{buscema compatibility} is easily proved.) Let us check the first condition in \eqref{buscema ovF}: if $x,y\in\g_i$, then this is trivial by \eqref{buscema taui}, \eqref{buscema alphai} and \eqref{buscema betai}; if, instead, $x\in\g_i$ and $y\in\g_{i+1}$, then by \eqref{buscema bound figo} we have
  \begin{eqnarray*}
  |\ov{F}(x)-\ov{F}(y)|&\le&|\ov{F}(x)-\ov{F}(p_i)|+|\ov{F}(y)-\ov{F}(p_i)|\le |x-p_i|^\a+|y-p_i|^\a
  \\
  &\le&2\,\max\{|x-p_i|,|y-p_i|\}^\a\le C\,|x-y|^\a\,;
  \end{eqnarray*}
  finally, if $x\in\g_i$ and $y\in\g_j$ with $j\ne i-1,i,i+1$, then one simply has $|x-y|\ge 1/C$.

  We are thus left to prove the second condition in \eqref{buscema ovF}. If $x,y\in\g_i$, then we have
  \begin{eqnarray}\label{buscema 1}
    |a(y)-a(x)-\ov{F}(x)\cdot(y-x)|\le|a(y)-a(x)-\a_i(x)\tau_i(x)\cdot(y-x)|+|\beta_i(x)\cdot(x-y)|\,.
  \end{eqnarray}
  If $x<y$ in the orientation of $\g_i$ induced by $\tau_i$, then
  \begin{equation}\label{buscema H1 taui}
      |\H^1(\g_i(x,y))-\tau_i(x)\cdot(y-x)|\le C\,|y-x|^2\,,
  \end{equation}
  while thanks to the first condition in \eqref{buscema betai}
  \begin{equation}\label{buscema betai quadratico}
    |\beta_i(x)\cdot(x-y)|\le C\,|x-y|^2\,,\qquad\forall x,y\in\g_i\,.
  \end{equation}
  By combining \eqref{buscema H1 taui} and \eqref{buscema betai quadratico} with \eqref{buscema 1} we prove the second condition in \eqref{buscema ovF} in the case $x,y\in\g_i$. Once again we are left to consider the case when $x\in\g_i$ and $y\in\g_{i+1}$. In this case,
  \begin{eqnarray*}
    &&|a(y)-a(x)-\ov{F}(x)\cdot(y-x)|
    \\
    &\le&|a(y)-a(p_i)-\a_{i+1}(p_i)\H^1(\g_{i+1}(p_i,y))|
    \\
    &&+ |a(p_i)-a_i(x)-\a_i(p_i)\H^1(\g_i(p_i,x))|
    \\
    &&+ |\a_{i+1}(p_i)\H^1(\g_{i+1}(p_i,y))-\a_i(p_i)\H^1(\g_i(p_i,x))-\ov{F}(x)\cdot(y-x)|\,,
  \end{eqnarray*}
  so that, by \eqref{buscema alphai}, \eqref{buscema H1 taui} (where $x<p_i$ in the orientation of $\g_i$ induced by $\tau_i$ and $p_i<y$ in the orientation of $\g_{i+1}$ induced by $\tau_{i+1}$) and \eqref{buscema bound figo}, one finds
  \begin{eqnarray*}
  &&|a(y)-a(x)-\ov{F}(x)\cdot(y-x)|\le C\,|x-y|^{1+\a}
  \\
  &&+
  |\a_{i+1}(p_i)\,(y-p_i)\cdot\tau_{i+1}(p_i)-\a_i(p_i)(p_i-x)\cdot\tau_i(p_i)-\ov{F}(x)\cdot(y-x)|\,.
  \end{eqnarray*}
  Thus it suffices to show that for every $x\in\g_i$ and $y\in\g_{i+1}$ one has
  \begin{equation}\label{buscema fine}
    \begin{split}
      \big|\big(\a_i(p_i)\tau_i(p_i)-\ov{F}(x)\big)\cdot(p_i-x)|\le C\,|x-y|^{1+\a}\,,
      \\
      \big|\big(\a_{i+1}(p_i)\tau_{i+1}(p_i)-\ov{F}(x)\big)\cdot(y-p_i)|\le C\,|x-y|^{1+\a}\,.
    \end{split}
  \end{equation}
  The first inequality in \eqref{buscema fine} descends from the fact that $\ov{F}(x)=\a_i(x)\tau_i(x)+\beta_i(x)$, and thus, by \eqref{buscema alphai}, \eqref{buscema betai quadratico}, and \eqref{buscema bound figo}
  \begin{eqnarray*}
    &&\big|\big(\a_i(p_i)\tau_i(p_i)-\ov{F}(x)\big)\cdot(p_i-x)|
    \\
    &&\le|\a_i(p_i)\tau_i(p_i)-\a_i(x)\tau_i(x)|\,|p_i-x|+|\beta_i(x)\cdot(p_i-x)|
    \\
    &&\le C\,|p_i-x|^{1+\a}\le C\,|x-y|^{1+\a}\,.
  \end{eqnarray*}
  Concerning the second inequality, by exploiting
  \[
  \big|(y-p_i)-\big((y-p_i)\cdot\tau_{i+1}(p_i)\big)\,\tau_{i+1}(p_i)\big|\le C\,|y-p_i|^2\,,\qquad\forall y\in \g_{i+1}\,,
  \]
  we find that
  \begin{eqnarray*}
    &&\big|\big(\a_{i+1}(p_i)\tau_{i+1}(p_i)-\ov{F}(x)\big)\cdot(y-p_i)|
    \\
    &\le& C|y-p_i|^2+\big|\a_{i+1}(p_i)-\ov{F}(x)\cdot\tau_{i+1}(p_i)\big|\,|y-p_i|\,;
  \end{eqnarray*}
  by projecting \eqref{buscema compatibility} on $\tau_{i+1}(p_{i+1})$ we have $\a_{i+1}(p_i)=\ov{F}(p_i)\,\cdot \tau_{i+1}(p_{i+1})$, so that
  \[
  \big|\a_{i+1}(p_i)-\ov{F}(x)\cdot\tau_{i+1}(p_i)\big|\le |\ov{F}(p_i)-\ov{F}(x)|\le C\,|x-p_i|^\a\,;
  \]
  thus, again by \eqref{buscema bound figo},
  \[
  \big|\big(\a_{i+1}(p_i)\tau_{i+1}(p_i)-\ov{F}(x)\big)\cdot(y-p_i)|\le C\,|x-p_i|^\a|y-p_i|\le C\,\max\{|x-p_i|,|y-p_i|\}^{1+\a}\le C\,|x-y|^{1+\a}\,,
  \]
  and the proof is complete.
  \end{proof}

\section{Proof of the improved convergence theorem}\label{section proof of improved convergence}
  We now prove Theorem \ref{thm improved convergence 3d}. For $\mu_0$ to be determined, we fix $\mu<\mu_0$ and $\rho<\mu^2$. (We automatically entail $\rho<\rho_0$, for $\rho_0$ the constant determined in the previous section, up to taking $\mu_0$ small enough.) Let us fix $S\in\Ss(\E)$, and correspondingly let $S_k\in\Ss(\E)$ be the surfaces associated to $S$ as in the previous section, and let us set $S^*=S\setminus\S_T(\E)$. In order to prove the theorem it is enough to show that for $k\ge k_0$ (depending on $\mu$) there exists an homeomorphism $f_k$ between $S$ and $S_k$ such that
  \begin{equation}
    \label{fine fine}
    \begin{split}
    &\|f_k\|_{C^{1,\a}(S^*)}\le C_0\,,
    \\
    &\lim_{k\to\infty}\|f_k-\Id\|_{C^1(S^*)}=0\,,
    \\
    &\|\pi^S(f_k-\Id)\|_{C^1(S^*)}\le\frac{C_0}\mu\,\max_{\g\in\Gamma(S)}\|f_k-\Id\|_{C^1(\g)}\,,
    \\
    &\pi^S(f_k-\Id)=0\qquad\mbox{on $[S]_\mu=S\setminus I_\mu(\bd_\tau(S))$}\,,
    \end{split}
  \end{equation}
  where for every $x\in S^*$, $v\in\R^3$, and $h:S^*\to\R^3$ we set
  \[
  \pi^S_x(v)=v-(v\cdot\nu_S(x))\,\nu_S(x)\,,\qquad \pi^Sh(x)=\pi^S_x(h(x))\,.
  \]
  If $S\in\Ss_*(\E)$, then \eqref{fine fine} is an immediate consequence of Lemma \ref{lemma Skstar}, see in particular \eqref{Skstar diffeo}, so that, from now on we assume $S\in\Ss_\S(\E)$. In this way, by Lemma \ref{lemma Skstar} there exists $\rho_0>0$ such that for every $\rho<\rho_0$ and $k\ge k_0$ (depending on $\rho$) there exists $\psi_k\in C^{1,\a}([S]_\rho)$ such that
  \begin{equation}\label{basta iii}
    \begin{split}
      &\hspace{0.9cm}[S_k]_{3\rho}\subset(\Id+\psi_k\nu_S)([S]_\rho)\subset S_k\,,
      \\
      &\|\psi_k\|_{C^{1,\a}([S]_\rho)}\le L\,,\qquad \|\psi_k\|_{C^1([S]_\rho)}\le\rho\,.
    \end{split}
  \end{equation}
  and moreover
  \begin{equation}
    \label{basta S C1alphaL and hd rho}
      \hd(S,S_k)\le\rho\,,\qquad\|S_k\|_{C^{1,\a}}\le L\,,
  \end{equation}
  where the last condition is \eqref{nuSk L}. We denote by $f^0_k$ the $C^{1,\a}$-diffeomorphism between $\bd_\tau(S)$ and $\bd_\tau(S_k)$: precisely, $f^0_k$ is an homeomorphism between $\bd_\tau(S)$ and $\bd_\tau(S_k)$ such that, by Lemma \ref{lemma fkstar Y}, Lemma \ref{lemma fkstar T part two}, \eqref{Sk normali}, and \eqref{Sk normali xx}, and up to increasing the value of $L$,
  \begin{equation}
    \label{basta ii k maggiore 1}
    \begin{split}
      &\|f^0_k\|_{C^{1,\a}(\g)}\le L\,,\qquad\|f^0_k-\Id\|_{C^1(\g)}\le\rho\,,
      \\
      &\|(\nu_{S_k}\circ f^0_k)-\nu_S\|_{C^0(\bd_\tau(S))}\le\rho\,,
      \\
      &\|(\nu_{S^*_k}^{co}\circ f^0_k)-\nu_{S^*}^{co}\|_{C^0(\bd(S^*))}\le\rho\,,
    \end{split}
  \end{equation}
  for every $\g\in\Gamma(S)$, where $S^*_k=S_k\setminus\S_T(\E_k)$. Our goal is now to glue together the boundary diffeomorphism $f^0_k$ to the normal diffeomorphisms $(\Id+\psi_k\nu_S)$ defined on $[S]_\rho$ in such a way to control the size of the tangential displacement $\pi^S(f_k-\Id)$. This is exactly the construction described in \cite[Theorem 3.1]{CiLeMaIC1} in the case of $k$-dimensional manifolds with boundary in $\R^n$. Here we have $k=2$ and $n=3$, but, unfortunately, we cannot directly apply that result because of the boundary singularities of $S$ (that is, because $S\cap\S_T(\E)$ may be nonempty). The proof of \cite[Theorem 3.1]{CiLeMaIC1} can be anyway adapted to this context and we now describe the main modifications needed to this end.

  The first remark is that, by arguing as in the proof of \cite[Theorem 3.5]{CiLeMaIC1}, in order to prove \eqref{fine fine} it is enough to show that for every $\rho<\mu^2$ and $k\ge k_0$ depending on $\rho$ there exists an homeomorphism $f_k^{\rho}$ between $S$ and $S_k$ such that
  \begin{eqnarray}\label{fine fine fine}
  \begin{split}
  &f_k^\rho=f_k^0\quad\mbox{on $\bd_\tau(S)$}\,,\qquad f_k^\rho=\Id+\psi_k\,\nu_S\quad\mbox{on $[S]_\mu$}\,,
  \\
  &\|f_k^\rho\|_{C^{1,\a}(S^*)}\le C\,,\qquad \|f_k^\rho-\Id\|_{C^1(S^*)}\le \frac{C}\mu\,\rho^\a\,,
  \\
  &\|\pi^S(f_k^\rho-\Id)\|_{C^1(S^*)}\le\frac{C}\mu\, \max_{\g\in\Gamma(S)}\|f_k^0-\Id\|_{C^1(\g)}\,.
  \end{split}
  \end{eqnarray}
  To this end we start we start noticing that, by Remark \ref{remark nuS} (see in particular \eqref{cluster C21}) and by applying Whitney's extension theorem as explained in \cite[Remark 3.4]{CiLeMaIC1}, there exists a surface $\widetilde{S}$ of class $C^{2,1}$ in $\R^3$ such that, up to increasing the value of $L$,
 \begin{eqnarray}
 \label{basta S0tilde limitata}
    S\subset \widetilde{S}\,,\qquad\diam(\widetilde{S})\le L\,,\qquad
    {\rm \dist}_{\widetilde{S}}(x,y)\le L\,|x-y|\,,\qquad\forall x,y\in\widetilde{S}\,,
  \end{eqnarray}
  and there exists $\nu\in C^{1,1}(\widetilde{S};\SS^2)$ with $T_x\widetilde{S}=\nu(x)^\perp$ for every $x\in \widetilde{S}$ and
  \begin{equation}
    \label{basta nu0 lipschitz}
    \|\nu\|_{C^{1,1}(\widetilde{S})}\le L\,.
  \end{equation}
  As a consequence of \eqref{basta nu0 lipschitz}, one has
  \begin{eqnarray}
  \label{basta condizioni su widetilde S0}
    \begin{split}
    |\nu(x)\cdot(y-x)|\le C\,|\pi_x^{\widetilde{S}}(y-x)|^2\,,&\qquad\forall x\in\widetilde{S}\,, y\in B_{x,1/C}\cap\widetilde{S}\,,
    \\
    |y-x|\le 2\,|\pi^{\widetilde{S}}_x(y-x)|\,,&\qquad\forall x\in \widetilde{S}\,, y\in B_{x,1/C}\cap \widetilde{S}\,,
    \\
    \|\pi^{\widetilde{S}}_x-\pi^{\widetilde{S}}_y\|\le C\,\,|x-y|\,,&\qquad \forall x,y\in \widetilde{S}\,.
    \end{split}
  \end{eqnarray}
  Finally, we exploit $\|S_k\|_{C^{1,\a}}\le L$ and \cite[Proposition 2.4]{CiLeMaIC1} to construct $d_{S_k}\in C^{1,\a}(\R^3)$ and $\e_k>0$ such that
  \begin{equation}\label{basta dS}
    \begin{split}
    &\mbox{$d_{S_k}(x)=0$ and $\nabla d_{S_k}(x)=\nu_{S_k}(x)$ for every $x\in S_k$}\,,
    \\
    &\mbox{$I_{\e_k}(S_k)\cap\{d_{S_k}=0\}$ is a $C^{1,\a}$-surface in $\R^3$}\,,
    \\
    &\max\big\{\e_k^{-1}\,,\|d_{S_k}\|_{C^{1,\a}(\R^3)}\big\}\le C\,.
  \end{split}
  \end{equation}
  We set $\widetilde{S}_k=I_{\e_k}(S_k)\cap\{d_{S_k}=0\}$ and, for any $x\in\widetilde{S}$ and $\de>0$,
  \[
  K_\de=I_{\de}(\bd_\tau(S))\cap \widetilde{S}\,,\qquad K_\de^+=I_{\de}(\bd_\tau(S))\cap S\,.
  \]
  We now claim that there exists $\eta_0$ depending on $\a$ and $L$ only such that, if $\mu_0$ is small enough with respect to $\eta_0$, then one can construct $f_k^\rho:K_{\eta_0}\to \widetilde{S}_k$ with
  \begin{eqnarray}
    \label{f uguale f0}
    f_k^\rho&=&f_k^0\,,\hspace{0.7cm}\qquad\mbox{on $\bd_\tau(S)$}\,,
    \\
    \label{f uguale a psi}
    f_k^\rho&=&\Id+\psi_k\nu_S\,,\qquad\mbox{on $K_{\eta_0}^+\setminus K_\mu$}\,,
    \\
    \label{curvette f C11}
    \|f_k^\rho\|_{C^{1,\a}(K_{\eta_0})}&\le& C\,,
    \\
    \label{curvette f va a zero in C0}
    \|f_k^\rho-\Id\|_{C^0(K_{\eta_0}^+)}&\le& C\,\rho\,
    \\
    \label{curvette f va a zero in C1}
    \|f_k^\rho-\Id\|_{C^1(K_{\eta_0}^+)}&\le& \frac{C}\mu\,\rho^\a\,,
    \\
    \label{curvette f tangenziale displacement}
    \|\pi^{\widetilde{S}}(f_k^\rho-\Id)\|_{C^1(K_{\eta_0})}&\le& \frac{C}\mu\,\max_{\g\in\Gamma(S)}
    \|f_k^0-\Id\|_{C^1(\g)}\,,
    \\
    \label{curvette f jacobiano}
    J^{\widetilde{S}}f_k^\rho&\ge&\frac12\,,\qquad\mbox{on $K_{\eta_0}$}\,,
    \\
    \label{curvette attacco normale}
    \pi^{\widetilde{S}}(f_k^\rho-\Id)&=&0\,,\hspace{0.1cm}\qquad \mbox{on $K_{\eta_0}\setminus K_{\mu}$}\,,
    \\
    \label{curvette inclusione}
    f_k^\rho(K_{\eta_0}^+)&\subset&S_k\,.
  \end{eqnarray}
  Once the claim has been proved, one defines $f_k$ by setting $f_k=(\Id+\psi_k\nu_S)$ on $S\setminus K_{\eta_0}$, and then by setting $f_k=f_k^\rho$ on the rest of $S$. The fact that this gluing operation defines a diffeomorphism with the properties listed in \eqref{fine fine fine} follows from \eqref{basta iii}, \eqref{basta condizioni su widetilde S0}, \eqref{curvette f C11}, \eqref{curvette f va a zero in C0},  \eqref{curvette f va a zero in C1},   \eqref{curvette f jacobiano}, and \eqref{curvette inclusione} thanks also to a uniform version of the inverse function theorem, see \cite[Theorem 2.1]{CiLeMaIC1}. To prove the claim, we fix $\phi\in C^\infty(\R^3\times(0,\infty);[0,1])$ such that, setting $\phi_\mu=\phi(\cdot,\mu)$,
  \begin{eqnarray}\label{cutoff phis}
  \begin{split}
      &\phi_\mu\in C^\infty_c(I_\mu(\bd_\tau(S)))\,,\qquad\mbox{$\phi_{\mu}=1$ on $I_{\mu/2}(\bd_\tau(S))$}\,,
  \\
  &|\nabla\phi_\mu(x)|\le\frac{C}\mu\,,\qquad |\nabla^2\phi_\mu(x)|\le\frac{C}{\mu^2}\,,\qquad\forall (x,\mu)\in\R^3\times(0,\infty)\,.
  \end{split}
  \end{eqnarray}
  Next, we define $\bar{a}_k:\bd_\tau(S)\to\R$ and $\bar{b}_k:\bd_\tau(S)\to\R^3$ by setting
  \begin{equation}
    \label{sxtx}
      \bar{a}_k(x)=(f_k^0(x)-x)\cdot\nu(x)\,,\qquad \bar{b}_k(x)=f_k^0(x)-x-\bar{a}_k(x)\,\nu(x)\,,\qquad x\in\bd_\tau(S)\,,
  \end{equation}
  so that by \eqref{basta ii k maggiore 1} one has
  \begin{equation}\label{ai b k maggiore 2}
    \|\bar{a}_k\|_{C^{1,\a}(\g)}+\|\bar{b}_k\|_{C^{1,\a}(\g)}\le C\,,
   \qquad\|\bar{a}_k\|_{C^1(\g)}+\|\bar{b}_k\|_{C^1(\g)}\le C\,\|f_k^0-\Id\|_{C^1(\g)}\,,
  \end{equation}
  for every $\g\in\Gamma(S)$. By using Lemma \ref{lemma extension from gamma} (which we must use in place of \cite[Proposition 2.4]{CiLeMaIC1} in order to deal with the singular points of $\bd_\tau(S)$), we find $a_k\in C^{1,\a}(\R^3)$ and $b_k\in C^{1,\a}(\R^3;\R^3)$ such that
  \begin{equation}
    \begin{split}
      \label{ai bi estesi e stime}
      &\mbox{$a_k=\bar{a}_k$ and $b_k=\bar{b}_k$}\,,\qquad\mbox{on $\bd_\tau(S)$}\,,
      \\
      &\|a_k\|_{C^{1,\a}(\R^3)}+\|b_k\|_{C^{1,\a}(\R^3)}\le C\,,
      \\
      &\|a_k\|_{C^1(\R^3)}+\|b_k\|_{C^1(\R^3)}\le C\,\max_{\g\in\Gamma(S)}\,\|f_k^0-\Id\|_{C^1(\g)}\,.
    \end{split}
  \end{equation}
  Correspondingly, we define $F_k\in C^{1,\a}(\widetilde{S}\times(-1,1);\R^3)$ by setting, for $(x,t)\in \widetilde{S}\times(-1,1)$,
  \begin{equation}
  \label{definition of F}
  F_k(x,t)=x+\phi_{\mu}(x)\,b_k(x)+(a_k(x)+t)\,\nu(x)\,,
  \end{equation}
  and then exploit $d_{S_k}\in C^{1,\a}(\R^3)$ to define $u_k\in C^{1,\a}(\widetilde{S}\times(-1,1))$ as
  \[
  u_k(x,t)=d_{S_k}(F_k(x,t))\,,\qquad (x,t)\in \widetilde{S}\times(-1,1)\,.
  \]
  By noticing that, for every $x\in\bd_\tau(S)$, $u_k(x,0)=0$ (thanks to \eqref{basta dS}, \eqref{sxtx}, and \eqref{ai bi estesi e stime}) and $\pa u_k/\pa t(x,0)\ge 1/2$ (thanks to \eqref{basta dS} and \eqref{basta ii k maggiore 1}), and $\|u_k\|_{C^{1,\a}(\widetilde{S}\times(-1,1))}\le C$ (thanks to \eqref{basta nu0 lipschitz}, \eqref{basta dS}, \eqref{ai b k maggiore 2}, \eqref{ai bi estesi e stime}, \eqref{cutoff phis}) one applies a uniform version of the implicit function theorem (see \cite[Theorem 2.2]{CiLeMaIC1}) in order to find $\eta_0>0$ and a function $\zeta_k\in C^{1,\a}(K_{\eta_0})$ such that
  \begin{eqnarray}
  \label{cazzi e mazzi2}
  &&u_k(x,\zeta_k(x))=0\quad \forall x\in K_{\eta_0}\,,\qquad \zeta_k(x)=0\quad\forall x\in\bd_\tau(S)\,,
  \\
  \label{zeta C0 e C1alpha}
  &&\|\zeta_k\|_{C^0(K_{\eta_0})}\le C\,\eta_0\,,\qquad \|\zeta_k\|_{C^{1,\a}(K_{\eta_0})}\le C\,.
  \end{eqnarray}
  We prove the claim by setting
  \begin{eqnarray}\label{formula fgamma}
    f_k(x)=F_k(x,\zeta_k(x))\,,\qquad x\in K_{\eta_0}\,.
  \end{eqnarray}
  Following the same argument as in \cite[Proof of Theorem 3.1]{CiLeMaIC1}, one shows \eqref{f uguale f0}, \eqref{curvette f C11}, \eqref{curvette attacco normale}, \eqref{curvette f tangenziale displacement}, \eqref{curvette f jacobiano} and
  \begin{equation}
    \label{curvette inclusione tilde}
    f_k(K_{\eta_0})\subset\widetilde{S}_k\,,
  \end{equation}
  as well as
  \begin{eqnarray}\label{nuota3}
  \nabla^{\widetilde{S}}f_k(x)[\nu^{co}_{S^*}(x)]\cdot\nu^{co}_{S^*_k}(f_k(x))\ge\frac12\,,\qquad\forall x\in\bd(S^*)\,.
  \end{eqnarray}
  By \eqref{f uguale f0}, \eqref{curvette inclusione tilde}, and \eqref{curvette f jacobiano}, one has
  \begin{eqnarray*}
  \nabla^{\widetilde{S}}f_k(x)[T_x\widetilde{S}]=T_{f_k(x)}\widetilde{S}_k\,,\qquad\forall x\in\bd_\tau(S)\,,
  \\
  \nabla^{\widetilde{S}}f_k(x)[T_x(\bd(S^*))]=T_{f_k(x)}(\bd(S^*_k))\,,\qquad\forall x\in\bd(S^*)\,,
  \end{eqnarray*}
  so that \eqref{nuota3} gives at each $x\in\bd(S^*)$
  \[
  \nabla^{\widetilde{S}}f_k(x)\Big[\big\{v\in T_x\widetilde{S}:v\cdot\nu_{S^*}^{co}(x)<0\big\}\Big]
  =\big\{w\in T_{f_k(x)}\widetilde{S}_k:w\cdot\nu_{S_k^*}^{co}(f_k(x))<0\big\}\,.
  \]
  By combining this fact with \eqref{curvette inclusione tilde} we deduce \eqref{curvette inclusione} (up to possibly further decreasing $\eta_0$ in dependence of the bound in \eqref{curvette f C11}). We are thus left to prove \eqref{f uguale a psi}, \eqref{curvette f va a zero in C0} and \eqref{curvette f va a zero in C1}, and this can be achieved once again by arguing exactly as in the proof of \cite[Proof of Theorem 3.1]{CiLeMaIC1}. This completes the proof of Theorem \ref{thm improved convergence 3d}.

%

\appendix

\section{Proof of Theorem \ref{thm david plug}}\label{appendix david plug} The aim of this section is proving Theorem \ref{thm david plug}, i.e., we want to prove that if $\E$ is satisfies
\begin{equation}
  \label{lambdar0 minimizing cluster appendix}
P(\E;B_{x,r})\le P(\F;B_{x,r})+\Lambda\,\d(\E,\F)\,,
\end{equation}
whenever $x\in\R^n$, $r<r_0$ and $\E(h)\Delta\F(h)\cc B_{x,r}$ for every $h=1,...,N$, then there exists positive constants $L$ and $\rho_0$ (depending on $\Lambda$, $r_0$, $n$, $N$ and $\max_{1\le h\le N}|\E(h)|$ only) such that
\begin{equation}
    \label{paE is almostminimal appendix}
      \H^{n-1}(W\cap\pa\E)\le \H^{n-1}(f(W\cap\pa\E))+ L\,r^n\,,
\end{equation}
whenever $f:\R^n\to\R^n$ is Lipschitzian, $W=\{f\ne\Id\}$, and $\diam(W\cup f(W))=r<\rho_0$. We notice that this is trivial when $f$ is a bi-Lipschitz map. Indeed, in this case, $f(\E)=\{f(\E(h))\}_{h=1}^N$ is a $N$-cluster in $\R^n$ with $\pa^*f(\E)=_{\H^{n-1}}f(\pa^*\E)$ (as it follows, e.g., by \cite[Proposition 17.1]{maggiBOOK}), and thus \eqref{lambdar0 minimizing cluster appendix} boils down to \eqref{paE is almostminimal appendix} if one takes $\F=f(\E)$. This said, Taylor's regularity theorem is based on the possibility of testing \eqref{paE is almostminimal appendix} on non-injective Lipschitz maps $f$. In order to deduce \eqref{paE is almostminimal appendix} from \eqref{lambdar0 minimizing cluster appendix} on such maps, one needs to construct a comparison cluster $\F$, admissible in \eqref{lambdar0 minimizing cluster}, and with $P(\F;W)\le \H^{n-1}(f(W\cap\pa\E))$. Proposition \ref{lemma F} below is crucial in achieving this goal, and in order to state it we introduce an ad hoc definition.

Let us recall that an {\it integer rectifiable $n$-current} $T$ on $\R^n$ is a linear functional on the vector space $\DD^k(\R^n)$ of compactly supported smooth $k$-forms on $\R^n$ which can be represented by integration as
\begin{equation}
  \label{current T}
  \la T,\om\ra=\int_M\,\theta\,\la \om,\tau\ra\,d\H^k\,,\qquad\forall \om\in\DD^k(\R^n)\,,
\end{equation}
where $M$ is an $\H^k$-rectifiable set in $\R^n$, $\theta$ is a Borel measurable, integer-valued and non-negative function defined on $M$, and $\tau$ is a Borel orientation of $M$ (that is, $\tau(x)$ is a simple unit $k$-vector defining an orientation on the approximate tangent space $T_xM$ for $\H^k$-a.e. $x\in M$ such that $T_xM$ exists).  We set $\|T\|=\theta\,\H^k\llcorner M$ for the {\it total variation measure} of $T$, $\theta^*(x)$ for the mod $2$ representative of $\theta(x)$ in $\{0,1\}$, and define the {\it carrier} of $T$ as
\[
\car\,T=\big\{x\in\R^n:\theta^*(x)=1\big\}\,,
\]
(Here we are borrowing some concepts and terminology from \cite{ZiemerModulo2}, while avoiding to use the full machinery of currents modulo $2$ for the sake of simplicity.) We denote by $T^{*}$ the integer rectifiable $k$-current (with unit multiplicity) defined by
\begin{equation*}
 \la T^*,\om\ra=\int_{M} \theta^{*}\la \om,\tau\ra\,d\H^k = \int_{\car\,T}\,\la \om,\tau\ra\,d\H^k\,,\qquad\forall \om\in\DD^k(\R^n)\,,
\end{equation*}
so that $\|T^*\|=\H^k\llcorner(\car\,T)$. Notice that, with this definition, if $T_{1}$ and $T_{2}$ are two rectifiable currents, then it holds
\begin{equation}\label{substar}
\|(T_{1}+T_{2})^{*}\| \le \|T_{1}^{*}\| + \|T_{2}^{*}\|\,,
\end{equation}
where the simple verification of \eqref{substar} is left to the reader.
Next, we let $e=e_1\wedge\dots\wedge e_n$ and $\EE^n$ denote, respectively, the canonical orientation of $\R^n$ and the corresponding canonical identification of $\R^n$ as an $n$-dimensional multiplicity-one current; then we set $T_E=\EE^n\llcorner E$ for every Borel set $E\subset\R^n$. If $T$ is an integral $n$-current on $\R^n$ (that is to say, both $T$ and $\pa T$ are integer rectifiable currents in $\R^n$), then by \cite[4.5.17]{FedererBOOK} there exists a partition $\{G^k\}_{k\in\Z}$ into sets of finite perimeter such that
\begin{equation}\label{tenda0}
  \begin{split}
    &T=T^{+}-T^{-}\,,
\\
T^{+} = \sum_{k\in\N}k\,\EE^n&\llcorner G^k\,,\qquad T^{-} = \sum_{k\in\N}k\,\EE^n\llcorner G^{-k}\,.
  \end{split}
\end{equation}
In this case, $\theta^*=1$ a.e. on $G^k$ if and only if $k$ is odd (i.e., $k=2i+1$ for some $i\in\Z$), and thus we obtain
\begin{equation}
  \label{tenda}
  \begin{split}
  \car(T^{\pm})=&\bigcup_{k\ge1\,{\rm odd}}G^{\pm k}\,,\qquad \car(T) = \car(T^{+})\cup \car(T^{-})\,,
   \\&
  T^*=\EE^n\llcorner \car(T^{+}) - \EE^n\llcorner \car(T^{-})\,.
  \end{split}
\end{equation}
In this way, if $E$ and $F$ are sets of finite perimeter, then $T=T_E-T_F$ is an $n$-dimensional integral current on $\R^n$ with $\car(T^{+})=E\setminus F$, $\car(T^{-})=F\setminus E$, and $\car(T) = E\Delta F$; therefore we find
\begin{equation}
  \label{freddie1}
(T_E-T_F)^*=\EE^n\llcorner(E\setminus F) - \EE^n\llcorner(F\setminus E)\,,\qquad \|T_E-T_F\|=\|(T_E-T_F)^*\|=\H^n\llcorner(E\Delta F)\,.
\end{equation}
We are now ready to state and prove Proposition \ref{lemma F}, where the notion of push-forward of a current is used, see, e.g. \cite[Chapter 26]{SimonLN}.

\begin{proposition}\label{lemma F}
  If $E$ is a set of finite perimeter in $\R^n$, $f:\R^n\to\R^n$ is a proper Lipschitz map, and we set $F=\car\,(f_\#T_E)$, then $F$ is a set of finite perimeter with
  \begin{equation}\label{onborelsets}
  \H^{n-1}\llcorner\pa^*F\le \H^{n-1}\llcorner f(\pa^*E)\quad\mbox{on Borel sets.}
  \end{equation}
  Moreover, $\MM((T_E-f_\#T_E)^*)=|E\Delta F|$.
\end{proposition}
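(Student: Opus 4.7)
The plan is to exploit the decomposition theorem for integral $n$-currents to unravel the structure of $T':=f_\#T_E$, and then to extract both the inclusion $\pa^*F\subset f(\pa^*E)$ and the finite-perimeter property of $F$ from the boundary identity $\pa T'=f_\#\pa T_E$. To begin, I would verify that $T'$ is an integral $n$-current, with $\MM(T')\le(\Lip f)^n\,|E|<\infty$ and $\MM(\pa T')=\MM(f_\#\pa T_E)\le(\Lip f)^{n-1}\,P(E)<\infty$. Applying \cite[4.5.17]{FedererBOOK} as recorded in \eqref{tenda0}, I then obtain a Borel partition $\{G^k\}_{k\in\Z}$ of $\R^n$ into sets of finite perimeter with
\[
T'=\sum_{k\ge1}k\,\EE^n\llcorner G^k-\sum_{k\ge1}k\,\EE^n\llcorner G^{-k},
\]
so that uniformly, the (signed) pointwise multiplicity of $T'$ at $x\in G^k$ equals $k$; then by \eqref{tenda} one has $F=\car\,T'=\bigcup_{k\,\text{odd}}G^k$, a disjoint union over odd $k\in\Z\setminus\{0\}$.

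The core of the proof is the inclusion $\pa^*F\subset f(\pa^*E)$ up to $\H^{n-1}$-null sets. By the standard local structure result for partitions of $\R^n$ into sets of finite perimeter, up to $\H^{n-1}$-null sets one has $\pa^*F=\bigcup_{k\,\text{odd},\,j\,\text{even}}(\pa^*G^k\cap\pa^*G^j)$, so it suffices to show that $\pa^*G^k\cap\pa^*G^j\subset f(\pa^*E)$ whenever $k\ne j$. Using $\pa T'=\sum_k k\,\pa\ld G^k\rd$ together with the fact that at each point of $\pa^*G^k\cap\pa^*G^j$ the outer unit normals of $G^k$ and $G^j$ are antipodal, such an interface contributes with net multiplicity $|k-j|\ge1$ to $\pa T'$, whence
\[
\|\pa T'\|\ge\H^{n-1}\llcorner(\pa^*G^k\cap\pa^*G^j)\qquad\forall\,k\ne j.
\]
On the other hand $\pa T'=f_\#\pa T_E$ with $\|\pa T_E\|=\H^{n-1}\llcorner\pa^*E$, so $\|\pa T'\|$ is concentrated on $f(\pa^*E)$. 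Combining these two facts gives the desired inclusion. Monotonicity of $\H^{n-1}$ then yields \eqref{onborelsets}, while the bound $P(F)\le\H^{n-1}(f(\pa^*E))\le(\Lip f)^{n-1}\,P(E)$ confirms that $F$ has finite perimeter.

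For the mass identity, I would note that $T_E-T'$ is again an integral $n$-current whose pointwise integer multiplicity at $x\in G^k$ equals $\mathbf1_E(x)-k$. By the definition of carrier, $x\in\car(T_E-T')$ iff this integer is odd, i.e.\ iff exactly one of the two conditions ``$x\in E$'' and ``$k$ is odd'' holds; since $F=\bigcup_{k\,\text{odd}}G^k$, this is precisely the condition $x\in E\Delta F$. Hence $\car(T_E-T')=E\Delta F$ up to a Lebesgue-null set, giving $\MM((T_E-T')^*)=\H^n(\car(T_E-T'))=|E\Delta F|$. The main delicate point of the whole argument is the boundary computation above: one must carefully handle the cancellation between $k\,\pa\ld G^k\rd$ and $j\,\pa\ld G^j\rd$ on their common interface so as to obtain net multiplicity exactly $|k-j|$, and one must invoke the standard fact that the push-forward of a rectifiable measure by a Lipschitz map is concentrated on the image of its support in order to localize $\|\pa T'\|$ inside $f(\pa^*E)$. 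Once these points are settled, the parity check underlying the mass identity is purely combinatorial.
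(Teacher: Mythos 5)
Your proposal is correct and follows essentially the same route as the paper: the decomposition $f_\#T_E=\sum_k k\,\EE^n\llcorner G^k$ from \cite[4.5.17]{FedererBOOK}, the identification $F=\bigcup_{k\,\mathrm{odd}}G^k$, the interface computation showing that $\|\pa(f_\#T_E)\|$ has density $|k-j|\ge 1$ on $\pa^*G^k\cap\pa^*G^j$ (so that $\H^{n-1}\llcorner\pa^*F$ is controlled by $\|\pa(f_\#T_E)\|$), and the same parity bookkeeping for $\MM((T_E-f_\#T_E)^*)=|E\Delta F|$. The only (harmless) variation is in the final localization: the paper passes through the measure inequality $\|\pa(f_\#T_E)\|\le\H^{n-1}\llcorner f(\pa^*E)$, whereas you use only that $\|f_\#(\pa T_E)\|$ is concentrated on $f(\pa^*E)$ to deduce the inclusion $\pa^*F\subset f(\pa^*E)$ up to $\H^{n-1}$-null sets, which yields \eqref{onborelsets} just as well.
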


\begin{proof}
  Since $f$ is a proper Lipschitz map and $E$ is a set of finite perimeter, $f_\#T_E$ is a integral $n$-current in $\R^n$. By \eqref{tenda0} and \eqref{tenda} there exists a partition $\{G^k\}_{k\in\Z}$ of $\R^n$ into sets of finite perimeter such that
  \begin{eqnarray}\label{F lemma 1}
    f_\#T_E=\sum_{k\in\Z}k\,\EE^n\llcorner G^k\,,
    \qquad
    F=\car(f_\#T_E)=\bigcup_{k\,{\rm odd}}G^k\,.
  \end{eqnarray}
  Since $\{G^k\}_{k\in\Z}$ is a partition of $\R^n$ into sets of finite perimeter, we have
  \begin{eqnarray}\label{F lemma 2}
    \pa^*G^k =\bigcup_{h\ne k}(\pa^*G^k\cap\pa^*G^h) &&\qquad\mbox{up to $\H^{n-1}$--null sets,}
    \\\label{F lemma 3}
    \H^{n-1}(\pa^*G^k\cap\pa^*G^h\cap\pa^*G^j)=0\,,&&\qquad\mbox{($k,h,j$ distinct)}\,,
    \\\label{F lemma 4}
    \nu_{G^k}(x)=-\nu_{G^h}(x)\,,&&\qquad\mbox{for $\H^{n-1}$-a.e. $x\in\pa^*G^k\cap\pa^*G^h$ ($k\ne h$)}\,.
  \end{eqnarray}
  By exploiting \cite[Theorem 16.3]{maggiBOOK} we thus find that, up to a $\H^{n-1}$-negligible set,
  \begin{equation}
    \label{longway}
      \pa^*F=\bigcup_{k\,{\rm odd}}\,\bigcup_{h\,{\rm even}}\pa^*G^k\cap\pa^*G^h\,.
  \end{equation}
At the same time, by \eqref{F lemma 1}, \eqref{F lemma 2} and \eqref{F lemma 4} we obtain
   \begin{eqnarray*}
    \pa(f_{\#}T_E)&=&\sum_{k\in\Z}k\,\star\nu_{G^k}\,\H^{n-1}\llcorner \pa^*G^k
    \\
    &=&\sum_{k\in\Z}\sum_{h\ne k}k\,\star\nu_{G^k}\,\H^{n-1}\llcorner\Big( \pa^*G^k\cap \pa^*G^h\Big)
    \\
    &=&\sum_{k\in\Z}\sum_{h<k}(k-h)\,\star\nu_{G^k}\,\H^{n-1}\llcorner\Big( \pa^*G^k\cap \pa^*G^h\Big)\,,
  \end{eqnarray*}
  so that owing to \eqref{longway} we get
\begin{equation}\label{abbiapieta2}
  \H^{n-1}\llcorner\pa^*F\le \|\pa(f_{\#}T_E)\|\,.
\end{equation}
Finally, by noticing that $\pa(f_{\#}T_E)=f_\#(\pa T_E)$ with $\pa T_E=\star\nu_E\,\H^{n-1}\llcorner\pa^*E$, we find that
  \begin{equation}\label{abbiapieta3}
  \|\pa(f_{\#}T_E)\|\le\H^{n-1}\llcorner f(\pa^*E)\,,
  \end{equation}
  so that \eqref{onborelsets} immediately follows by \eqref{abbiapieta2} and \eqref{abbiapieta3}. We finally notice that, since $\{G^k\}_{k\in\Z}$ is a partition of $\R^n$ up to $\H^{n}$--negligible sets, we have
\begin{eqnarray*}
    T_E-f_\#T_E&=&\EE^n\llcorner E-\sum_{k\in\Z}k\,\EE^n\llcorner G^k
    \\
    &=&\sum_{k\in\Z}(1-k)\EE^n\llcorner (E\cap G^{k})-\sum_{k\in\Z}k\,\EE^n\llcorner (G^k\setminus E)\,.
  \end{eqnarray*}
  Thus
\begin{eqnarray*}
    (T_E-f_\#T_E)^*
    &=&\sum_{k\le 0\,{\rm even}}\EE^n\llcorner (E\cap G^k) - \sum_{k\ge 2\,{\rm even}}\EE^n\llcorner (E\cap G^k)\\
 & &\qquad   +\sum_{k\ge 1\,{\rm odd}}\,\EE^n\llcorner (G^k\setminus E) - \sum_{k\le -1\,{\rm odd}}\,\EE^n\llcorner (G^k\setminus E)\,,
    \\
    \MM((T_E-f_\#T_E)^*)
    &=&\sum_{k\,{\rm even}}|E\cap G^k|+\sum_{k\,{\rm odd}}|G^k\setminus E|=|E\setminus F|+|F\setminus E|\,,
  \end{eqnarray*}
and $\MM((T_E-f_\#T_E)^*)=|E\Delta F|$, as claimed.
\end{proof}

\begin{proof}[Proof of Theorem \ref{thm david plug}]
  Given $\rho_0>0$ (the required constraints on $\rho_0$ shall be stated in the course of proof), let us consider a Lipschitz map $f:\R^n\to\R^n$ such that $\diam(W\cup f(W))=r$ for some $r<\rho_0$, where $W=\{f\ne \Id\}$. In this way
  \begin{eqnarray}\label{vincoli su f 1}
    \diam(W\cup f(W))=r\,,\qquad W\cup f(W)\cc B(x_0,3r)\,,
  \end{eqnarray}
  for some $x_0\in\R^n$. Let us consider the integer $n$-currents $T_h=\EE^n\llcorner \E(h)$, $0\le h\le N$. Since $\{\E(h)\}_{h=0}^N$ is a partition of $\R^n$ up to a negligible set, we have that
  \begin{equation}
    \label{combined}
      \EE^n=\sum_{h=0}^NT_h\,.
  \end{equation}
  At the same time, since $f$ is a proper Lipschitz map with $f(x)=x$ for every $x$ outside some bounded set, for a.e. $y\in\R^n$ and for every $R>0$ large enough we have
  \[
  1=\deg(f,B_R,y)=\int_{f^{-1}(y)}\,\frac{\det\nabla f(x)}{|\det\nabla f(x)|}\,d\H^0(x)=\int_{f^{-1}(y)}\,\frac{\det\nabla f(x)}{Jf(x)}\,d\H^0(x)\,.
  \]
  Therefore, for every $\om=\vphi\,dx^1\wedge\dots\wedge dx^n$ with compact support (contained in $B_R$ for some large value of $R$), by the area formula (see, e.g. \cite[Corollary 8.11]{maggiBOOK}) we find that
  \begin{eqnarray*}
    \la f_\#\EE^n,\om\ra=\la \EE^n,f^\#\om\ra
    =\int_{\R^n}\, \vphi(f(x))\,\det\nabla f(x)\,dx
    =\int_{\R^n}\, \vphi(y)\,\deg(f,B_R,y)\,dy=\la \EE^n,\om\ra\,,
  \end{eqnarray*}
  that is, $\EE^n=f_\#\EE^n$. In particular, \eqref{combined} gives
  \begin{equation}
    \label{ford 0}
      \EE^n=\sum_{h=0}^Nf_\#T_h\,.
  \end{equation}
  By\eqref{substar} and by \eqref{ford 0} we find $\H^n\le\sum_{h=0}^N\|(f_\#T_h)^*\|$, which of course implies, setting for brevity
  \[
  F_h=\car\,f_\#T_h
  \,,\qquad 0\le h\le N\,,
  \]
  that the family of sets of finite perimeter $\{F_h\}_{h=0}^N$ covers $\R^n$ up to a set of Lebesgue measure zero. We now notice that, by Proposition \ref{lemma F}, for every $h=0,...,N$,
  \begin{eqnarray}
    \label{speriamobene}
    \H^{n-1}\llcorner\pa^*F_h&\le&\H^{n-1}\llcorner\, f(\pa^*\E(h))\,,
    \\\label{briton}
    \MM((T_h-f_\#T_h)^*)&=&|\E(h)\Delta F_h|\,,
  \end{eqnarray}
  and then define a partition of $\R^n$ into sets of finite perimeter $\{\F(h)\}_{h=0}^N$ (up to $\H^n$-negligible sets) by setting
  \begin{eqnarray}\label{13h}
    \F(0)=F_0\,,\qquad \F(h)=F_h \setminus \bigcup_{j=0}^{h-1} F_j\,,\qquad 1\le h\le N\,.
  \end{eqnarray}
  Since $\E$ is a cluster, for each $h=0,\dots,N$ one has
  \begin{equation}\label{fico}
  |\E(h)\Delta\F(h)|\le\sum_{j=0}^h\,|\E(h)\Delta F_h|=\sum_{j=0}^h\,\MM((T_h-f_\#T_h)^*)\le (h+1)\,|W|\,,
  \end{equation}
  where we have also used \eqref{briton}. In particular, for $h=1,\dots,N$,
  \[
  \Big||\E(h)|-|\F(h)|\Big|\le (N+1)\,|W|\le (N+1)\,2^n\,\om_n\,r^n\le C(n,N)\,(\rho_0)^n\,,
  \]
  so that, for $\rho_0$ suitably small with respect to $n$, $N$, and $\vol(\E)$, we find that $|\F(h)|>0$ for $h=1,...,N$, and thus that $\F$ is a $N$-cluster. For each $h=0,...,N$, thanks to \eqref{vincoli su f 1}, we have $\E(h)\Delta F_h\cc W\subset B_{x_0,3\,r}$, and thus $\E(h)\Delta\F(h)\cc W\subset B_{x_0,3r}$: hence, provided $3\rho_0\le r_0$, we can exploit the fact that $\E$ is a $(\Lambda,r_0)$-minimizing cluster to find
  \begin{equation}
    \label{david1}
      P(\E;W)\le P(\F;W)+\Lambda\,\d(\E,\F)\,.
  \end{equation}
  By \eqref{fico}, and since $\spt\,(T_h-f_\#T_h)\subset W\cup f(W)$ with $\diam(W\cup f(W))<r$, we find that
  \begin{equation}
    \label{david0}
    \Lambda\,\d(\E,\F)\le  L\,r^n\,,
  \end{equation}
  for a suitable constant $ L$ depending on $\Lambda$, $n$, and $N$. We also claim that, if we set $S=\pa\E$, then
  \begin{equation}
    \label{david2}
      P(\E;W)=\H^{n-1}(S\cap W)\,,\qquad P(\F;W)\le \H^{n-1}(f(S\cap W))\,.
  \end{equation}
  The first identity follows since $\H^{n-1}(\pa\E\setminus\pa^*\E)=0$. Concerning the second identity, let us first notice that, by \cite[Theorem 16.3]{maggiBOOK} and by \eqref{speriamobene}
  \[
  \pa^*\F(h)\ \subset\ \bigcup_{j=0}^N\pa^*F_j\ \subset\ \bigcup_{j=0}^Nf(\pa^*\E(j))\ =\ f(\pa^*\E)\ =\  f(\pa\E)\,,
  \]
  where the first and second inclusions, as well as the last equality, are true up to $\H^{n-1}$--negligible sets; moreover, in the last identity we have used again $\H^{n-1}(\pa\E\setminus\pa^*\E)=0$ and the area formula. Since $\{\F(h)\}_{h=0}^N$ is a partition of $\R^n$ into sets of finite perimeter, it turns out that $\{\pa^*\F(h)\cap\pa^*\F(k)\}_{0\le h<k\le N}$ is a family of Borel sets that are mutually disjoint up to $\H^{n-1}$--negligible sets, and thus, by taking also into account that $W\cap f(\pa\E)\subset f(W\cap\pa\E)$, we have
  \begin{eqnarray*}
    P(\F;W)=\sum_{0\le h< k\le N}^N\H^{n-1}\Big(W\cap\pa^*\F(h)\cap\pa^*\F(k)\Big)\le\H^{n-1}(W\cap f(\pa\E))\le\H^{n-1}(f(W\cap\pa\E))\,,
  \end{eqnarray*}
  and prove \eqref{david2}. By combining \eqref{david1}, \eqref{david0}, and \eqref{david2} we finally deduce that
  \[
  \H^{n-1}(W\cap\pa\E)\le \H^{n-1}(f(W\cap\pa\E))+ L\,r^n\,,
  \]
  and thus complete the proof of the theorem.
\end{proof}

\bibliography{references}
\bibliographystyle{is-alpha}
\end{document}